\definecolor{webgreen}{rgb}{0,.5,0}
\definecolor{webbrown}{rgb}{.6,0,0}
\newcommand{\seqnum}[1]{\href{http://oeis.org/#1}{\underline{#1}}}
\theoremstyle{plain}
\newtheorem{theorem}{Theorem}[section]
\newtheorem{lemma}[theorem]{Lemma}
\newtheorem{proposition}[theorem]{Proposition}
\theoremstyle{remark}
\newcommand\bigzero{\makebox(0,0){\text{\huge0}}}
\def\adots{
  \mathinner{\mkern1mu\raise1pt\hbox{.}\mkern2mu\raise4pt\hbox{.}
  \mkern2mu\raise7pt\vbox{\kern7pt\hbox{.}}\mkern1mu}}
\begin{document}

\begin{center}	
	\title[Families of Integral Cographs within a  Triangular Arrays]
	{Families of Integral Cographs within a  Triangular Array}
     \author{Hsin-Yun Ching}
	\address{The Citadel }
	\email{hching@citadel.edu}	
	\author{Rigoberto Fl\'orez}
	\address{The Citadel }
	\email{rigo.florez@citadel.edu}
	\author{Antara Mukherjee}
	\address{The Citadel}
	\email{antara.mukherjee@citadel.edu}
\thanks{Several of the results in this paper were found by the first author while working on his undergraduate research project under the guidance of the
second and third authors (who followed the guidelines given  in \cite{FlorezMukherjeeED}).}
\end{center}

\maketitle

\begin{abstract}
The \emph{determinant Hosoya triangle}, is a triangular array where the entries are the determinants of two-by-two Fibonacci matrices.
The determinant Hosoya triangle $\bmod \,2$ gives rise to three infinite families of graphs, that are formed by complete product (join)
of (the union of) two complete graphs with an empty graph. We give a necessary and sufficient condition for a graph from these families   
to be integral.

Some features of these graphs are: they are integral cographs, all graphs have at most five distinct eigenvalues, all graphs are  
either $d$-regular graphs with $d=2,4,6,\dots $ or almost-regular graphs, and some of them are Laplacian integral. Finally  
we extend some of these results to the Hosoya triangle.
\end{abstract}

\section {Introduction}\label{intro}
A graph is integral if the eigenvalues of its adjacency matrix are integers. These graphs are rare and the techniques used to
find them are quite complicated. The notion of integral graphs was first introduced in 1974 by Harary and Schwenk \cite{harary}.
A cograph is a graph that avoids the 4 vertices path as an induced subgraph \cite{Corneil}. In this paper we study an infinite  
family of integral cographs associated with a combinatorial triangle. These families have at most five distinct eigenvalues.

The coefficients of  many recurrence relations are represented using triangular arrangements. These representations give geometric tools 
to study properties  of the recurrence relation. A natural relation between graph theory and recursive relations holds through the   
adjacency matrices from the triangle. A classic example is the relation between the Pascal triangle and Sierpin\'ski graph or Hanoi graph 
that has fractal properties. Recently, some authors have been interested in graphs associated with Riordan arrays. Examples 
of graphs associated with combinatorial triangles can be found in \cite{Baker,Blair,Cheon,CheonJung,deo,koshy2}. 
The aim of this paper is to give another example of a  triangular array that gives rise to families of graphs with good behavior.

We use  $\mathbb{Z}_{>0}$ to denote the set of all positive  integers. The \emph{determinant Hosoya triangle}, in  Table \ref{Symmetric_matrix}, is a triangular array where the entry $H_{r,k}$ with  
$r,k\in \mathbb{Z}_{>0}$  (from left to right) is given by 
\[H_{r,k}=\begin{vmatrix}
F_{k+1} & F_{k}\\
F_{r-k+1} & F_{r-k+2}\\
\end{vmatrix}.\]
For example, the  entry $H_{7,5}$ of $\mathcal{H}$ is given by
\[H_{7,5}=\begin{vmatrix}
F_{6} & F_{5}\\
F_{3} & F_{4}\\
\end{vmatrix}=\begin{vmatrix}
8 & 5\\
2 & 3\\
\end{vmatrix}=24-10=14.\]

\begin{figure}[!ht]\label{symmetric:matrix}
	\includegraphics[width=10cm]{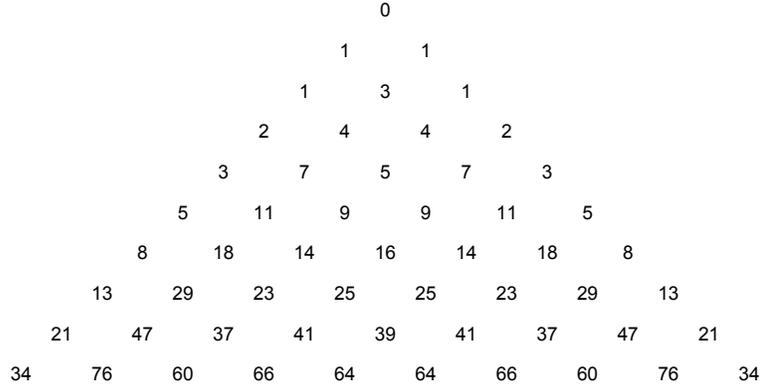}
	\caption{Determinant Hosoya triangle  $\mathcal{H}$.}\label{Symmetric_matrix}
\end{figure}
The determinant Hosoya triangle is symmetric with respect to its median. Therefore, the families of symmetric matrices, that are naturally 
embedded in this triangle, give rise to three infinite families of graphs
(the adjacency matrices are the symmetric matrices $\bmod\; 2$). These graphs are either $d$-regular with $d=2,4,6,\dots $ or almost-regular
graphs. All these three families of graphs have at most five distinct eigenvalues and one of the families is formed by integral graphs. We give
a necessary and sufficient condition to determine whether a family is integral.

The square matrices in the determinant Hosoya triangle are of rank two, so they are the sum of two rank-one matrices. This allows us to classify   
our matrices into three families depending on their size ($n=3t+r$, with $0\le r\le 2$). Their graphs are the complete product of two complete graphs   
with an empty graph. A graph (associated to the determinant Hosoya triangle) is integral if and only if its adjacency matrix is of size $n=3t+1$.

For example, from this triangle we obtain a rank two matrix $\mathcal{S}_{7}$, depicted on the left side of  \eqref{Example:Marices:Mod}. Its rows are 
restrictions of the diagonals of the triangle. This matrix has several interesting properties. For instance, evaluating the entries of this matrix 
$\mod 2$ we obtain an adjacency matrix that gives rise to a regular  subgraph --with five distinct eigenvalues-- that is an integral cograph. The left side in  
Figure \ref{symmetric:graph:Intro} depicts the adjacency graph from $\mathcal{S}_{7} \bmod 2$. Deleting its loops, we obtain the subgraph depicted on  
the right side in  Figure \ref{symmetric:graph:Intro}. 

 \begin{equation}\label{Example:Marices:Mod}
\mathcal{S}_{7}= \left[
\begin{array}{ccccccc}
 0 & 1 & 1 & 2 & 3 & 5 & 8 \\
 1 & 3 & 4 & 7 & 11 & 18 & 29 \\
 1 & 4 & 5 & 9 & 14 & 23 & 37 \\
 2 & 7 & 9 & 16 & 25 & 41 & 66 \\
 3 & 11 & 14 & 25 & 39 & 64 & 103 \\
 5 & 18 & 23 & 41 & 64 & 105 & 169 \\
 8 & 29 & 37 & 66 & 103 & 169 & 272 \\
\end{array}
\right]
\bmod 2 = 
\left[
\begin{array}{ccccccc}
 0 & 1 & 1 & 0 & 1 & 1 & 0 \\
 1 & 1 & 0 & 1 & 1 & 0 & 1 \\
 1 & 0 & 1 & 1 & 0 & 1 & 1 \\
 0 & 1 & 1 & 0 & 1 & 1 & 0 \\
 1 & 1 & 0 & 1 & 1 & 0 & 1 \\
 1 & 0 & 1 & 1 & 0 & 1 & 1 \\
 0 & 1 & 1 & 0 & 1 & 1 & 0 \\
\end{array}
\right]
\end{equation}

\begin{figure}[!ht]
	\includegraphics[scale=0.3]{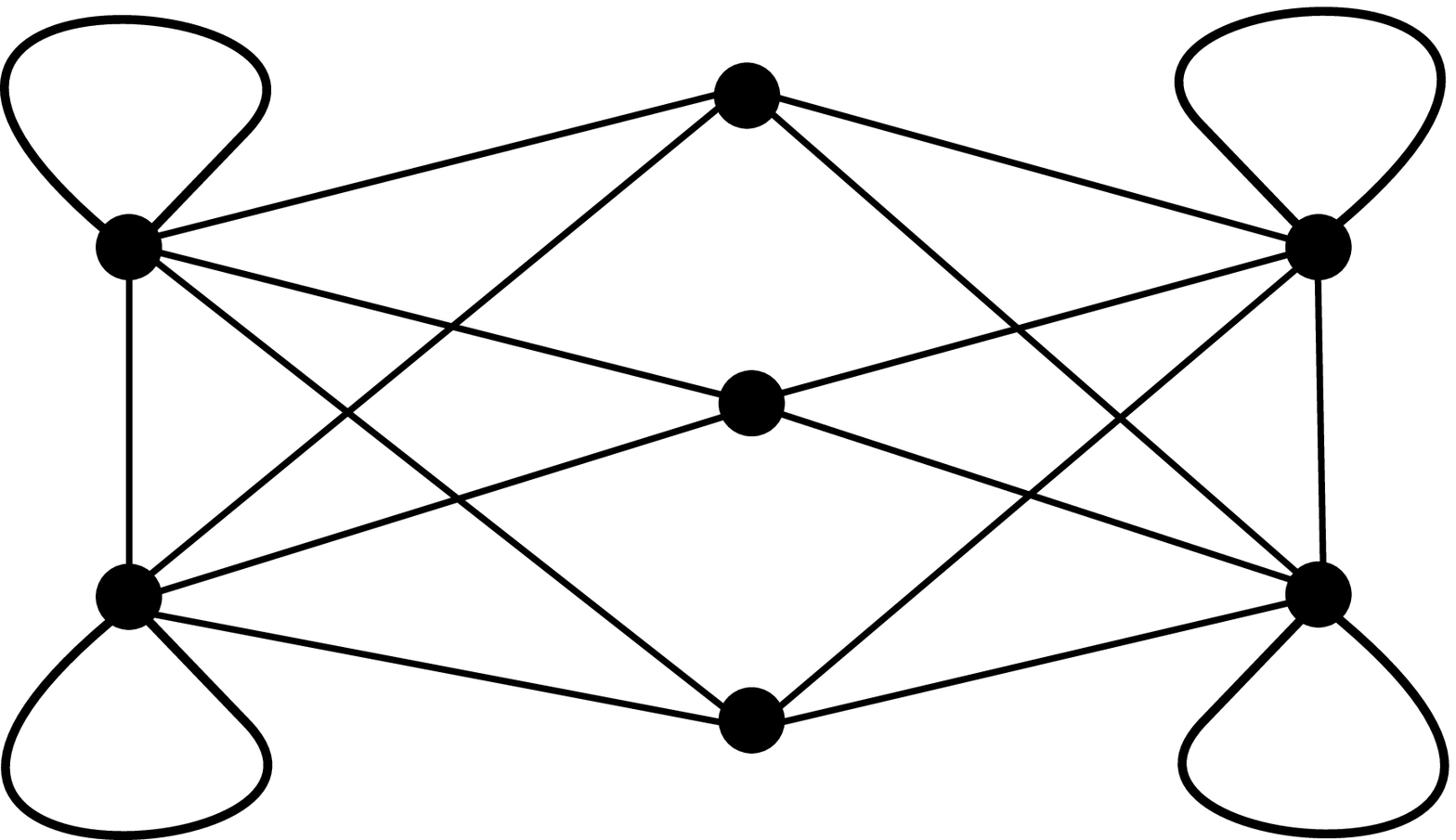} \hspace{2cm} \includegraphics[scale=0.3]{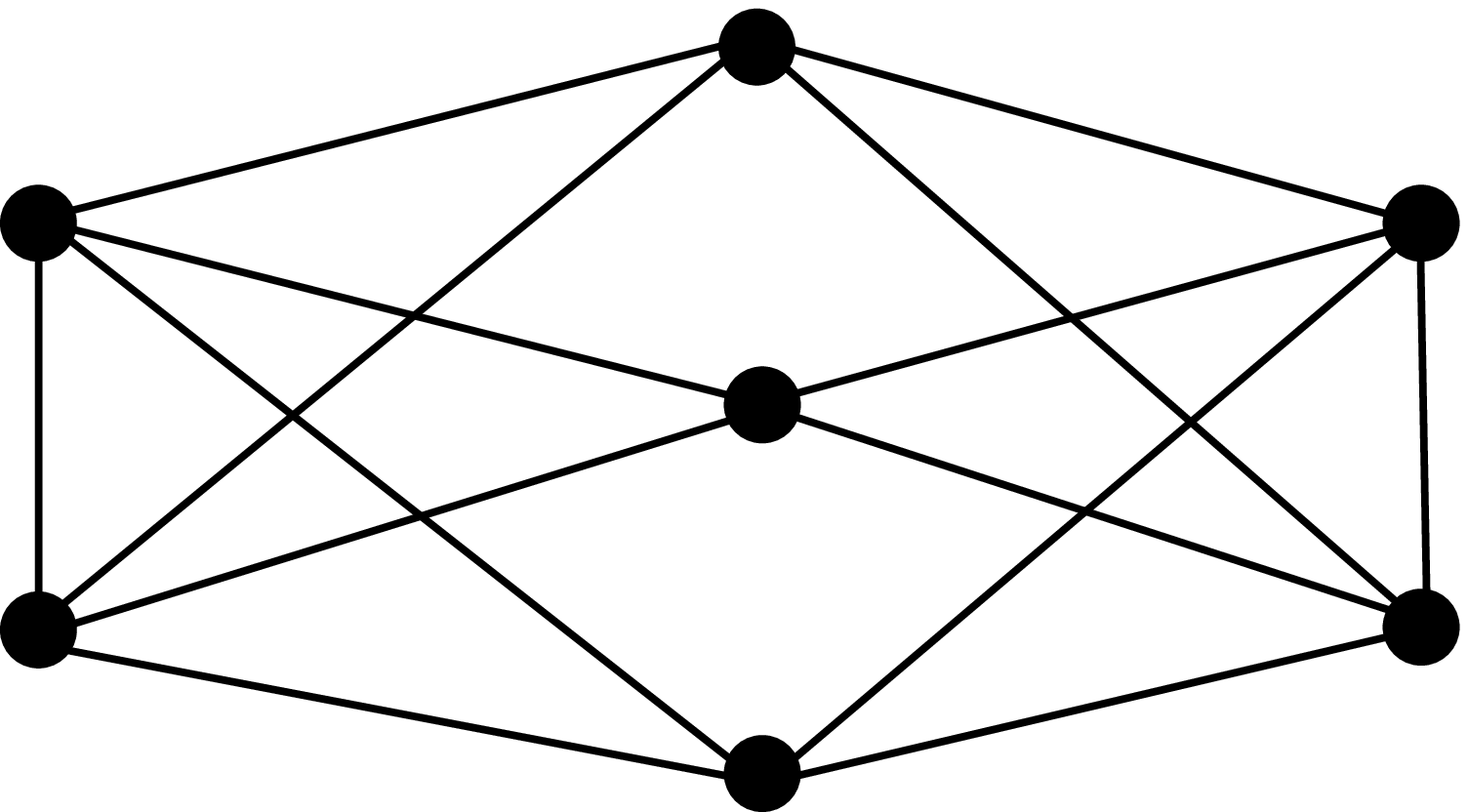}
		\caption{$\mathcal{G}_{7}^{*}=(K_{2}^{*} \sqcup K_{2}^{*}) \nabla \overline{K}_{3}$ \hspace{2cm} $\mathcal{G}_{7}=(K_{2} \sqcup K_{2}) \nabla \overline{K}_{3}$}\label{symmetric:graph:Intro}
\end{figure}

\textbf{Proposition.} \emph{The graph $\mathcal{G}_{w}$ is integral cograph if and only if $w=3t+1$.}

A \emph{join} or \emph{the complete product of two graphs} $G_1$ and $G_2$, denoted by $G_1\nabla G_2$, is defined as the graph obtained  by joining
each vertex of $G_1$ with all vertices of $G_2$.

The main results of this paper show that the graphs that are generated from the matrices in that determinant Hosoya triangle $\bmod\, 2$
are cographs of the form $(K_{n} \sqcup K_{m}) \nabla \overline{K}_{r}$. We use $G_1\sqcup G_2$ to denote the disjoint union of $G_1$ and $G_2$.
We give complete criteria --depending on the embedding of the adjacency matrices within the determinant Hosoya triangle $\bmod\,  2$-- for these
graphs to be integral. Thus, one of the main results of this paper states that a graph $(K_{n}\sqcup  K_{m})\nabla \overline{K}_{r}$ is integral
if and only if $2nr=pq$ and $n-1=p-q$ for some $p,q\in \mathbb{Z}_{>0}$.

Finally, we present a discussion on graphs associated to the \emph{Hosoya triangle}, a triangular array where the entries are products of Fibonacci numbers.

\section{Graphs from matrices in  combinatorial triangles }\label{triangles}

In this section we discuss three families of graphs originated from matrices embedded in the determinant Hosoya triangle. 

\subsection{Determinant Hosoya triangle} \label{triangles:1} 

The \emph{determinant Hosoya triangle} is a triangular array with entries  $\left\{H_{r, k}\right\}_{r, k> 0}$ (taken from left to right) defined recursively by 
\begin{equation}\label{Hosoya:Seq}
 H_{r,k}= H_{r-1,k}+H_{r-2,k}  \; \text{ and } \;
 H_{r,k}= H_{r-1,k-1}+H_{r-2,k-2},
 \end{equation}
with initial conditions $H_{1,1}=0, H_{2,1}= H_{2,2}=1,$ and $ H_{3,2}=3$
where $ r>1 $ and $1\le k \le r$ (see Figure \ref{Symmetric_matrix} on Page \pageref{Symmetric_matrix}). The triangle can also be obtained 
from the generating function $(x+y+xy)/((1-x-x^2)(1-y-y^2))$ (originally discovered by Sloane \cite{sloane}, see \seqnum{A108038}).

Equivalent definitions for the entries of this triangle  are:  
$$H_{r,k}:=F_{k-1}F_{r-k+2}+F_{k}F_{r-k},$$  
which is a variation of  $F_{k+r}=F_{k-1}F_{n}+F_{k}F_{n+1}$ (this identity gives rise to Fibonomial triangle, see Vajda \cite{Vajda}).  
The entry is also a determinant (a formal proof of these two facts is in \cite{BlairRigoAntara})
\[H_{r,k}=\begin{vmatrix}
F_{k+1} & F_{k}\\
F_{r-k+1} & F_{r-k+2}\\
\end{vmatrix}.\]

We now present a result on the divisibility property of the entries of the determinant Hosoya triangle. Note that for every positive $m$,
the entries of $(3m+1)$-th row of $\mathcal{H}$ are always even numbers.

\begin{proposition}\label{gcd:property}
	If $r, k \in \mathbb{Z}_{>0}$, then these hold
	\begin{enumerate}
		\item $F_{\gcd(k+1,r+2)}$ and $ F_{\gcd(k,r+2)}$ divide 
		$$H_{r,k}=\begin{vmatrix}
		F_{k+1} & F_{k}\\
		F_{r-k+1} & F_{r-k+2}\\
		\end{vmatrix},$$
		\item if $r=3t+1$ for $t>1$ and $1\le k\le {\lfloor(r+1)/2\rfloor}$, then
		$$H_{r,k}=\begin{vmatrix}
		F_{k+1} & F_{k}\\
		F_{r-k+1} & F_{r-k+2}\\
		\end{vmatrix} \text{ is even.}$$
	\end{enumerate}
	
\end{proposition}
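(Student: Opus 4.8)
The plan is to work from the determinant expansion $H_{r,k}=F_{k+1}F_{r-k+2}-F_kF_{r-k+1}$ and to lean on the single elementary fact that $F_a\mid F_b$ whenever $a\mid b$. For part (1), write $g=\gcd(k+1,r+2)$. Since $g\mid(k+1)$ we have $F_g\mid F_{k+1}$, so $F_g$ divides the first product $F_{k+1}F_{r-k+2}$. For the second product I would observe that $r-k+1=(r+2)-(k+1)$, and since $g$ divides both $r+2$ and $k+1$ it divides $r-k+1$; hence $F_g\mid F_{r-k+1}$ and $F_g$ divides $F_kF_{r-k+1}$ as well. Being a divisor of both summands, $F_g$ divides their difference $H_{r,k}$. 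The argument for $h=\gcd(k,r+2)$ is the mirror image: $h\mid k$ gives $F_h\mid F_k$ (dividing the second product), while $r-k+2=(r+2)-k$ is divisible by $h$, so $F_h\mid F_{r-k+2}$ (dividing the first product); again $F_h\mid H_{r,k}$.

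For part (2) I would reduce the determinant modulo $2$ and use that the Fibonacci sequence modulo $2$ is purely periodic with period $3$, namely $F_n\equiv0\pmod2$ exactly when $3\mid n$. With $r=3t+1$ one has $r\equiv1\pmod3$, and I would simply run the three cases $k\equiv0,1,2\pmod3$: in each case the residues of $k+1,\,k,\,r-k+2,\,r-k+1$ modulo $3$ force $F_{k+1}F_{r-k+2}\equiv F_kF_{r-k+1}\pmod2$, so that $H_{r,k}\equiv0\pmod2$. Alternatively, since $r+2=3(t+1)$ is a multiple of $3$, part (1) already yields evenness whenever $k\equiv0$ or $k\equiv2\pmod3$: in those cases $3$ divides $\gcd(k,r+2)$ or $\gcd(k+1,r+2)$, so the corresponding Fibonacci divisor is even, and only the residue $k\equiv1\pmod3$ then needs a direct check.

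The proof is essentially mechanical once the right form of $H_{r,k}$ is fixed; the only point requiring care is the index bookkeeping $r-k+1=(r+2)-(k+1)$ and $r-k+2=(r+2)-k$, which is exactly what lets each gcd divide an index appearing in each of the two products. I expect no genuine obstacle, but it is worth noting that part (2) is not a literal corollary of part (1): the case $k\equiv1\pmod3$ escapes both gcd divisors, so a residue computation (or the periodicity argument) is unavoidable there. The hypotheses $t>1$ and $1\le k\le\lfloor(r+1)/2\rfloor$ in part (2) play no role in the divisibility itself and reflect only the symmetric window of the triangle in which these entries are later used.
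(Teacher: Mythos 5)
Your proof is correct, and for part (1) it takes a genuinely lighter route than the paper. The paper computes the full greatest common divisor of the two products: using $\gcd(F_k,F_{k+1})=\gcd(F_{r-k+1},F_{r-k+2})=1$ it factors $\gcd(F_{k+1}F_{r-k+2},F_kF_{r-k+1})$ as $\gcd(F_{k+1},F_{r-k+1})\gcd(F_k,F_{r-k+2})$ and then invokes the identity $\gcd(F_m,F_n)=F_{\gcd(m,n)}$ to land on $F_{\gcd(k+1,r+2)}F_{\gcd(k,r+2)}$ via the same index bookkeeping $r-k+1=(r+2)-(k+1)$ and $r-k+2=(r+2)-k$ that you use. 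This yields the slightly stronger fact that the \emph{product} of the two Fibonacci divisors divides $H_{r,k}$, at the cost of needing the gcd identity and the multiplicativity step. Your version needs only the one-directional implication $a\mid b\Rightarrow F_a\mid F_b$ and checks that each candidate divisor divides both summands of the determinant separately, which is all the proposition actually asserts. For part (2) your ``alternative'' argument is precisely the paper's: with $r+2=3(t+1)$, part (1) supplies the even factor $F_3=2$ when $k\equiv 0$ or $2\pmod 3$, and the residual case $k\equiv 1\pmod 3$ is settled by observing that all four indices $k,k+1,r-k+1,r-k+2$ are then nonzero mod $3$, so both products are odd and their difference is even; the paper writes this case as $F_{3m+2}F_{3(t-m)+2}-F_{3m+1}F_{3(t-m)+1}$ but the content is identical. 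Your closing observation that the hypotheses $t>1$ and $1\le k\le\lfloor(r+1)/2\rfloor$ are not used in the divisibility argument is also consistent with the paper's proof, which never invokes them.
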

\begin{proof} Part (1). Since $\gcd(F_{k},F_{k+1})=\gcd(F_{r-k+2},F_{r-k+1})=1$,
	\begin{align*}
	\gcd(F_{k+1}F_{r-k+2}, F_{k}F_{r-k+1}) &= \gcd(F_{k+1},F_{r-k+1}) \gcd(F_{k},F_{r-k+2}) \\
	&= F_{\gcd(k+1,r-k+1)}F_{\gcd(k,r-k+2)}\\
	&= F_{\gcd(k+1,(r+2)-(k+1))}F_{\gcd(k,(r+2)-k)}\\
	&=  F_{\gcd(k+1,r+2)}F_{\gcd(k,r+2)}.
	\end{align*}
	
	Part (2). We prove this part using three cases namely, $k=3m, k=3m+1$, and $k=3m+2$.
If $k=3m$ or $k=3m+2$ for $m>1$, then using Part (1) with $r=3t+1$, we see that $F_{3}=2$ is a factor of $H_{r,k}$. Thus, $H_{r,k}$ is even.
	
	We now prove the case for $k=3m+1$ and $r=3t+1$. Note that $F_{k+1}F_{r-k+2}=F_{3m+2}F_{3t-3m+2}$.
	Rewriting $F_{3m+2}F_{3t-3m+2}$ as $F_{3m+2}F_{3(t-m)+2}$ we see that $F_{k+1}F_{r-k+2}$ is always odd.
	Similarly, we can see that $ F_{k}F_{r-k+1}=F_{3m+1}F_{3(t-m)+1}$, which is always odd.	
	Hence, $H_{r,k}=F_{k+1}F_{r-k+2}-F_{k}F_{r-k+1}$ is even.
\end{proof}

\subsection{Graphs from symmetric matrices} \label{adjacency:graph}

In this section we explore the properties of the graphs from symmetric matrices embedded in $\mathcal{H}$ $\bmod \; 2$.
Note that we are going to analyze the graphs that arise from the triangle in Figure \ref{Symmetric_matrix}.
Replacing the median of this triangle, namely $(0,3,5,16,\ldots)$ by $(0,0,0,0,\ldots)$, the new triangle $\bmod\; 2$, gives rise to a graph without loops.

We start with the definition of a symmetric matrix $\mathcal{S}_{w}$, $w\in \mathbb{Z}_{>0}$ within $\mathcal{H}$,

\begin{equation} \label{eq3}
\mathcal{S}_{w} =  \left[ {\begin{array}{lllll}
	H_{1,1} & H_{2,1} & H_{3,1} & \cdots& H_{w,1} \\
	H_{2,2}  &  H_{3,2}& H_{4,2} & \cdots & H_{w+1,2}\\
	\vdots &\vdots&\vdots&\ddots &\vdots\\
	H_{w,w}  &  H_{w+1,w} & H_{w+2,w} & \cdots & H_{2w-1,w}
	\end{array} } \right]_{w\times w}.
\end{equation}
The matrix $\mathcal{S}_{w}$ can be written as the sum of two rank one matrices. Therefore, the rank of $\mathcal{S}_{w}$ is at most 2.
Thus, $\mathcal{S}_{w}$ has the form
$\mathbf{u_1}^{T}\mathbf{v_1}+\mathbf{u_2}^{T}\mathbf{v_2}$, where $\mathbf{u_1},\mathbf{v_1},\mathbf{u_2},\mathbf{v_2}$
are column vectors with consecutive Fibonacci numbers. These can be seen  located along the sides of $\mathcal{H}$ in     
Figure \ref{Symmetric_matrix}.  Note that the entries of the matrix $\mathcal{S}_{w}\bmod 2$ are $s_{ij}=H_{i,j} \bmod 2$ for $1\le i,j\le w$.   
For example, the matrix $\mathcal{S}_{7}$ given in \eqref{Example:Marices:Mod} on Page \pageref{Example:Marices:Mod} is equal to this 
matrix.

\[
\mathcal{S}_{7}=
\left[
\begin{array}{c}
 0 \\
 1 \\
 1 \\
 2 \\
 3 \\
 5 \\
 8 \\
\end{array}
\right]
\left[
\begin{array}{ccccccc}
 1 & 1 & 2 & 3 & 5 & 8 & 13 \\
\end{array}
\right]
+\left[
\begin{array}{c}
 1 \\
 2 \\
 3 \\
 5 \\
 8 \\
 13 \\
 21 \\ 
\end{array}
\right]
\left[
\begin{array}{ccccccc}
 0 & 1 & 1 & 2 & 3 & 5 & 8 \\
\end{array}
\right].
\]

Let $K_{t}^{*}$ be the complete graph on $t$ vertices with loops at each vertex and recall that $\overline{K}_{t}$ is the empty graph on $t$ vertices.
We show that the graph obtained from the symmetric matrix $\mathcal{S}_{w}\bmod 2$ is given by
\begin{equation}\label{graph:definition}
\mathcal{G}_{w}^{*}=
\begin{cases} (K_{t}^{*}\sqcup K_{t}^{*})\nabla\overline{K}_{t},           & \mbox{ if }\;   w=3t;\\
	(K_{t}^{*}\sqcup K_{t}^{*})\nabla\overline{K}_{t+1},           & \mbox{ if }\;   w=3t+1;\\
	(K_{t}^{*}\sqcup K_{t+1}^{*})\nabla\overline{K}_{t+1},           & \mbox{ if }\;   w=3t+2.
\end{cases}
\end{equation}

For simplicity, we use $\mathcal{G}_{nmr}^{*}$ to denote the graph $(K_{n}\sqcup K_{m})\nabla \overline{K}_{r}$ (again for simplicity, we do not use 
$\mathcal{G}_{n,m,r}^{*}$ that is more natural). Therefore, $\mathcal{G}_{ttt}^{*}$ 
denotes  the graph when $w=3t$, $\mathcal{G}_{tt(t+1)}^{*}$ is the graph when $w=3t+1$, and $\mathcal{G}_{t(t+1)(t+1)}^{*}$ is the graph when $w=3t+2$  
(see Figure \ref{symmetric:graph}).

\begin{proposition}\label{graph:structure}
	Let $\mathcal{G}_{w}^{*}$ be as given in \eqref{graph:definition}. Then the graph of $\mathcal{S}_{w}\bmod 2$ is $\mathcal{G}_{w}^{*}$.
\end{proposition}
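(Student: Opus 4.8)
The plan is to analyze the structure of the matrix $\mathcal{S}_w \bmod 2$ directly by understanding the periodicity of the entries $H_{i,j} \bmod 2$. The key observation is that Fibonacci numbers modulo $2$ are periodic with period $3$: the sequence $F_0, F_1, F_2, \ldots \bmod 2$ is $0,1,1,0,1,1,\ldots$, so $F_n$ is even exactly when $3 \mid n$. First I would use the determinant formula $H_{i,j} = F_{j+1}F_{i-j+2} - F_j F_{i-j+1}$, or equivalently the parity of $H_{i,j}$, to write down the entry $s_{ij} = H_{i,j} \bmod 2$ as a function depending only on the residues of $i$ and $j$ modulo $3$. Since the row index $i$ in $\mathcal{S}_w$ ranges so that the $(i,j)$-entry is $H_{i+j-1,\,j}$ (reading the defining display in \eqref{eq3}), I would compute the parity of $H_{i+j-1,j}$ as a function of $(i \bmod 3, j \bmod 3)$ using Proposition \ref{gcd:property} together with the period-$3$ parity pattern of Fibonacci numbers.

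Once the parity is expressed purely in terms of residues modulo $3$, the matrix $\mathcal{S}_w \bmod 2$ becomes a block pattern: partitioning the index set $\{1,\ldots,w\}$ according to residue class mod $3$ reveals exactly which pairs of vertices are adjacent. The next step is to read off the adjacency structure from this residue pattern. I expect to find that two of the three residue classes (say those with indices $\equiv 1$ and $\equiv 2 \bmod 3$) induce complete graphs with loops, that vertices within one such class are all mutually adjacent (including self-loops, matching the diagonal entries being $1$), that the two classes are non-adjacent to each other (giving the disjoint union $K_t^* \sqcup K_m^*$), and that the third residue class (indices $\equiv 0 \bmod 3$) forms an independent set $\overline{K}_r$ whose every vertex is joined to every vertex of the other two classes. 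This last feature is precisely the join operation $\nabla$, so the graph is $(K_n^* \sqcup K_m^*)\nabla \overline{K}_r$.

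The final step is the bookkeeping of the three cases. I would count how many indices in $\{1,\ldots,w\}$ fall into each residue class mod $3$ for each of $w = 3t$, $w = 3t+1$, and $w = 3t+2$, and check that the resulting triple $(n,m,r)$ of class sizes matches the claimed values $(t,t,t)$, $(t,t,t+1)$, and $(t,t+1,t+1)$ respectively in \eqref{graph:definition}. Here I must be careful about which residue class plays the role of the empty graph $\overline{K}_r$ versus the two cliques, and to confirm the size assignments agree with the convention in \eqref{graph:definition}. The small shift in indexing (the top-left entry being $H_{1,1}=0$, which forces a loop-free vertex precisely on the median) needs to be reconciled with the residue computation.

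The main obstacle will be correctly translating the double-index Hosoya entry $H_{r,k}$ into the single matrix coordinate system of $\mathcal{S}_w$ and keeping the residues consistent: the row of $\mathcal{S}_w$ indexed by $i$ uses entries $H_{i+j-1,j}$ rather than $H_{i,j}$, so the relevant quantity is the parity of $H_{i+j-1,j}$, and I will need Proposition \ref{gcd:property} (in particular the fact that $F_3 = 2$ divides $H_{r,k}$ when $r \equiv 1 \bmod 3$) to pin down exactly when this parity is $0$. Once that parity table is verified, the identification of the graph and the case count are routine, but getting the table right — and confirming that the diagonal entries behave as loops consistent with $K_t^*$ — is where the real care is required.
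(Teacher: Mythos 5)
Your proposal is correct and follows essentially the same route as the paper: partition the indices $\{1,\dots,w\}$ by residue class mod $3$, identify the two clique classes and the independent class from the parity pattern of $H_{i+j-1,i}=F_{i+1}F_{j+1}-F_iF_j$, and count class sizes in the three cases $w=3t,3t+1,3t+2$. If anything you are more explicit than the paper, whose proof of this proposition only sets up the residue-class labeling and asserts the structure (the parity verification appearing later, in the Claim of Theorem \ref{theorem:regular}); your one slip is the tentative guess that the class $\equiv 0 \bmod 3$ is the independent set, when in fact it is the class $\equiv 1 \bmod 3$ (consistent with $H_{1,1}=0$ and the zero diagonal entries at $i=3k+1$), but you flag exactly this check as needing care.
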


\begin{proof}
We prove that for $w=3t+1$, the graph from the matrix is given by $\mathcal{G}_{tt(t+1)}^{*}$, the proof for the cases when $w=3t$ or $3t+2$ are similar so we omit them.
	
Let us consider the matrix $\mathcal{S}_{w}\bmod 2$ for $w=3t+1$. First we establish this notation: we denote by $u_{ij}$ those vertices of the graph
$\mathcal{G}_{tt(t+1)}^{*}$ that correspond to the entries of the $i$-th row and $j$-th column of $\mathcal{S}_{w}\bmod 2$ for $i,j=3k$, $1\le k\le t-1$
(see Figure \ref{symmetric:graph}). Similarly, we denote by $v_{ij}$ the vertices corresponding to entries in the $i$-th row and $j$-th column for
$i,j=3k-1$, $1\le k\le t-1$ and by $z_{ij}$ we denote the vertices corresponding to entries in the $i$-th row and $j$-th column for $i,j=3k+1$, $1\le k\le t$.
Using these notation, it is clear that $\mathcal{G}_{tt(t+1)}^{*}=G_{1}\nabla G_{2}$, such that $G_{1}$ is the union of two complete graphs each of order $t$
on vertices $\{u_{i}\}_{1\le i\le t}$ and $ \{v_{i}\}_{1\le i\le t}$ while $G_{2}=\overline{K}_{t+1}$, the empty graph on the vertices $\{z_{j}\}_{1\le j\le t+1}$.
\end{proof}

\begin{figure}[!ht]
	\includegraphics[scale=0.5]{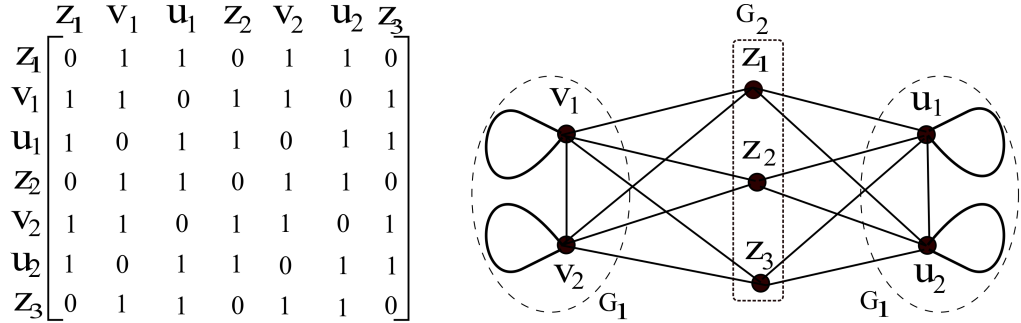}
		\caption{Matrix $\mathcal{S}_{7} \bmod 2$ and its graph $\mathcal{G}_{7}^{*}$.}\label{symmetric:graph}
\end{figure}

Note that, for some results in this paper we also use the graph $\mathcal{G}_{w}=\mathcal{G}_{w}^{*}\setminus \{ \text{all loops} \}$, that is the graph
obtained in Proposition \ref{graph:structure} but without loops. If $K_t$ represents the complete graph (with no loops at any vertex), then the graph
$\mathcal{G}_{w}$ is defined as:

\begin{equation}\label{graph:definition_noloops}
\mathcal{G}_{w}=
\begin{cases} (K_{t}\sqcup K_{t})\nabla\overline{K}_{t},           & \mbox{ if }\;   w=3t;\\
(K_{t}\sqcup K_{t})\nabla\overline{K}_{t+1},         & \mbox{ if }\;   w=3t+1;\\
(K_{t}\sqcup K_{t+1})\nabla\overline{K}_{t+1},           & \mbox{ if }\;   w=3t+2.
\end{cases}
\end{equation}

Once again for simplicity we use the notation $\mathcal{G}_{ttt}$ for the graph without loops when $w=3t$, and similarly, for $w=3t+1$,
we use $\mathcal{G}_{tt(t+1)}$, while for $w=3t+2$, we use the notation $\mathcal{G}_{t(t+1)(t+1)}$ for the graph without loops. See Table
\ref{tabla2} for some examples of this type of graphs.

Note that the empty graph describes the minimum dominating set of the graph $\mathcal{G}_{w}$.

\begin{table}[!ht]
	\begin{tabular}{|l|c|c|c|} \hline
		$t$ &	$3t$ & $3t+1$   &  $3t+2$ \\ \hline  \hline
		2  &$\vcenter{\hbox{\includegraphics[scale=.14]{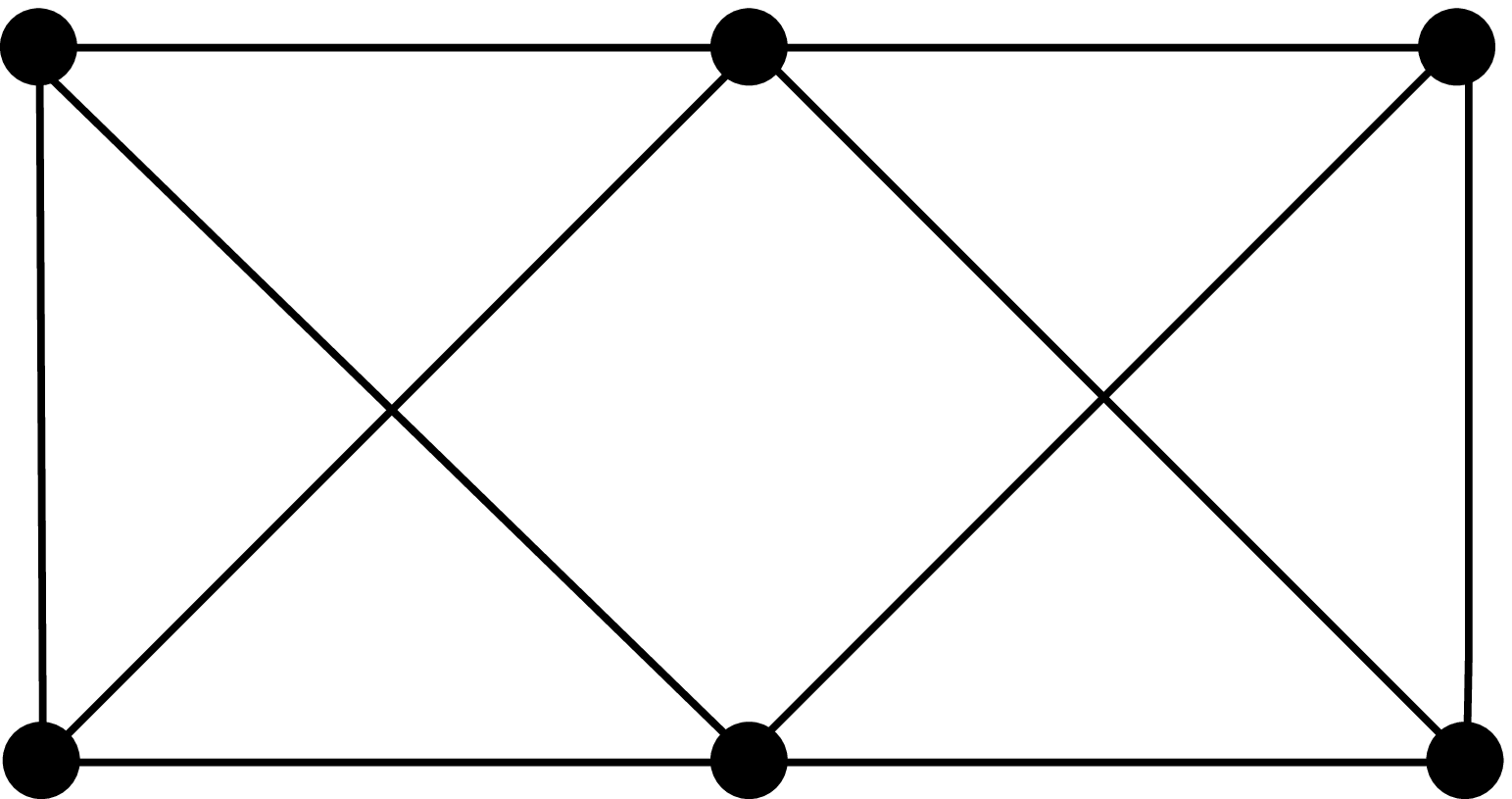}}}$
           &$\vcenter{\hbox{\includegraphics[scale=.14]{Graph1NormalNL.eps}}}$
           &$\vcenter{\hbox{\includegraphics[scale=.14]{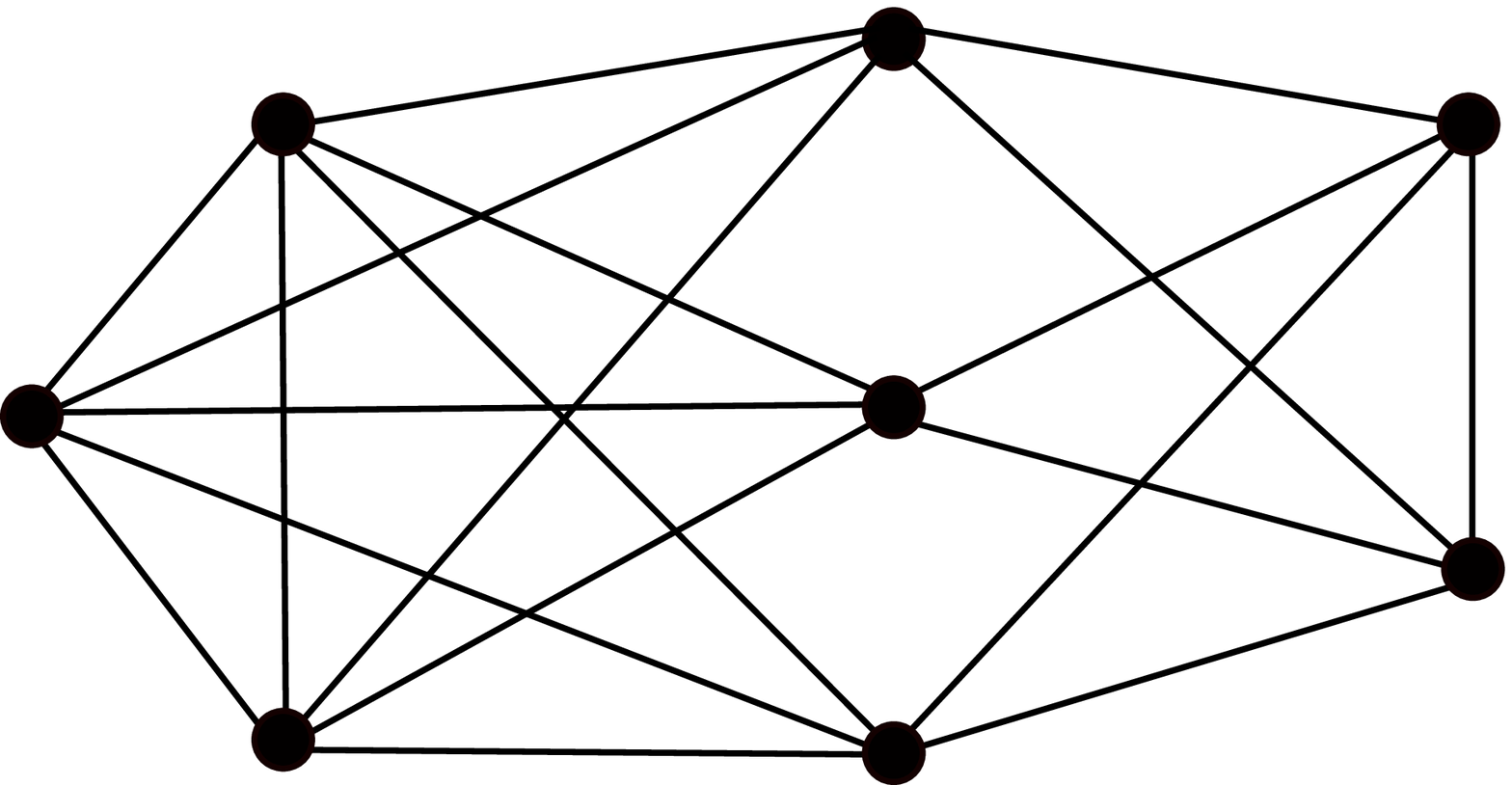}}}$\\ \hline
		3  &$\vcenter{\hbox{\includegraphics[scale=.14]{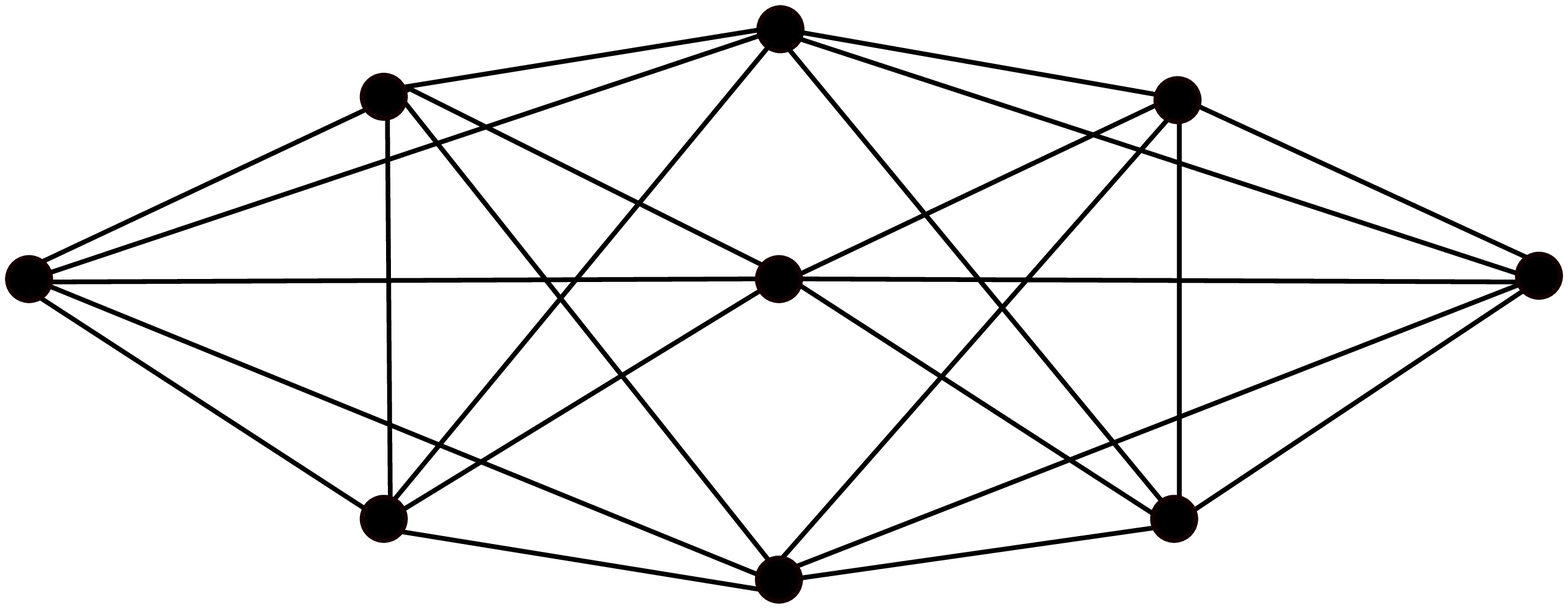}}}$
           &$\vcenter{\hbox{\includegraphics[scale=.14]{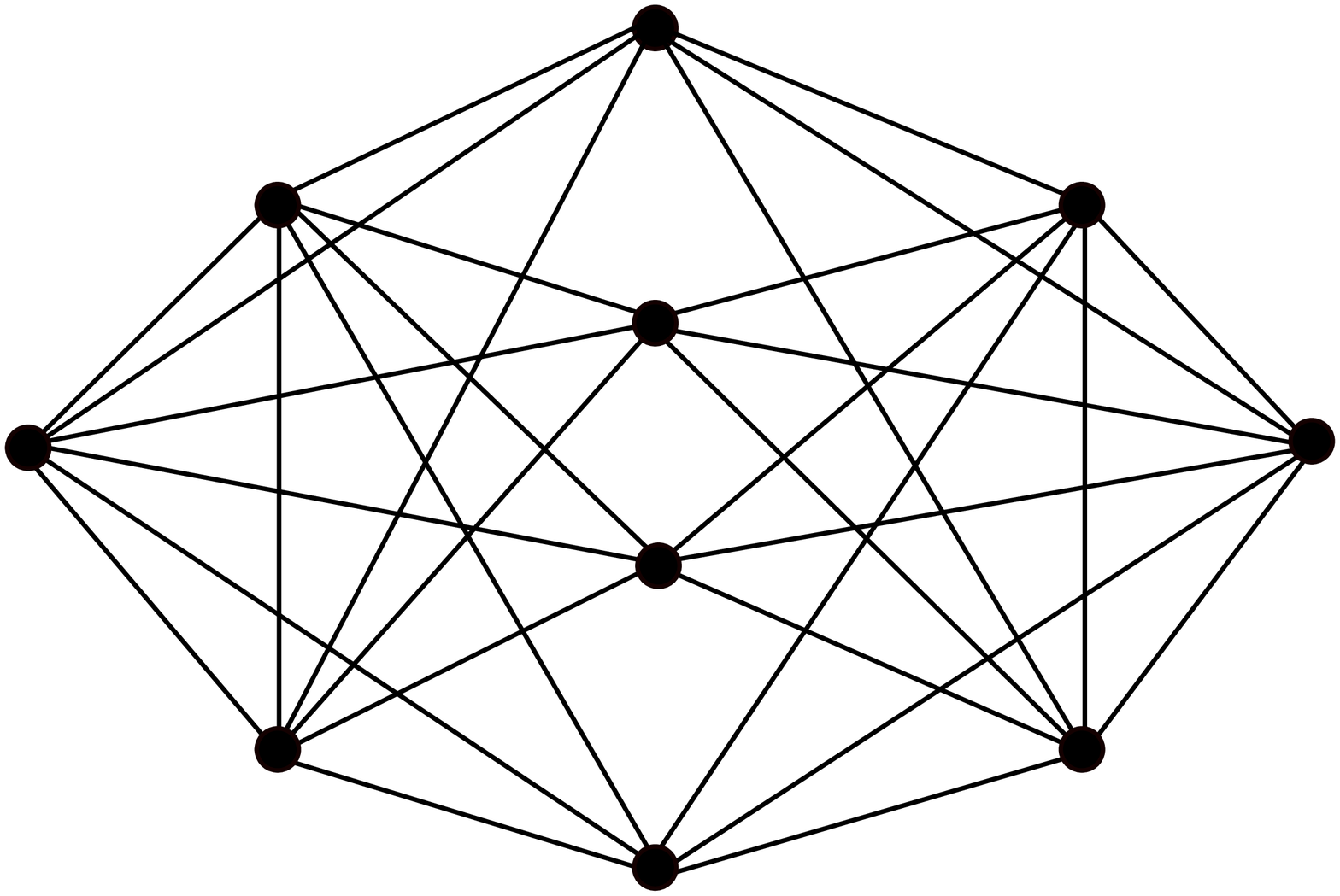}}}$
           &$\vcenter{\hbox{\includegraphics[scale=.14]{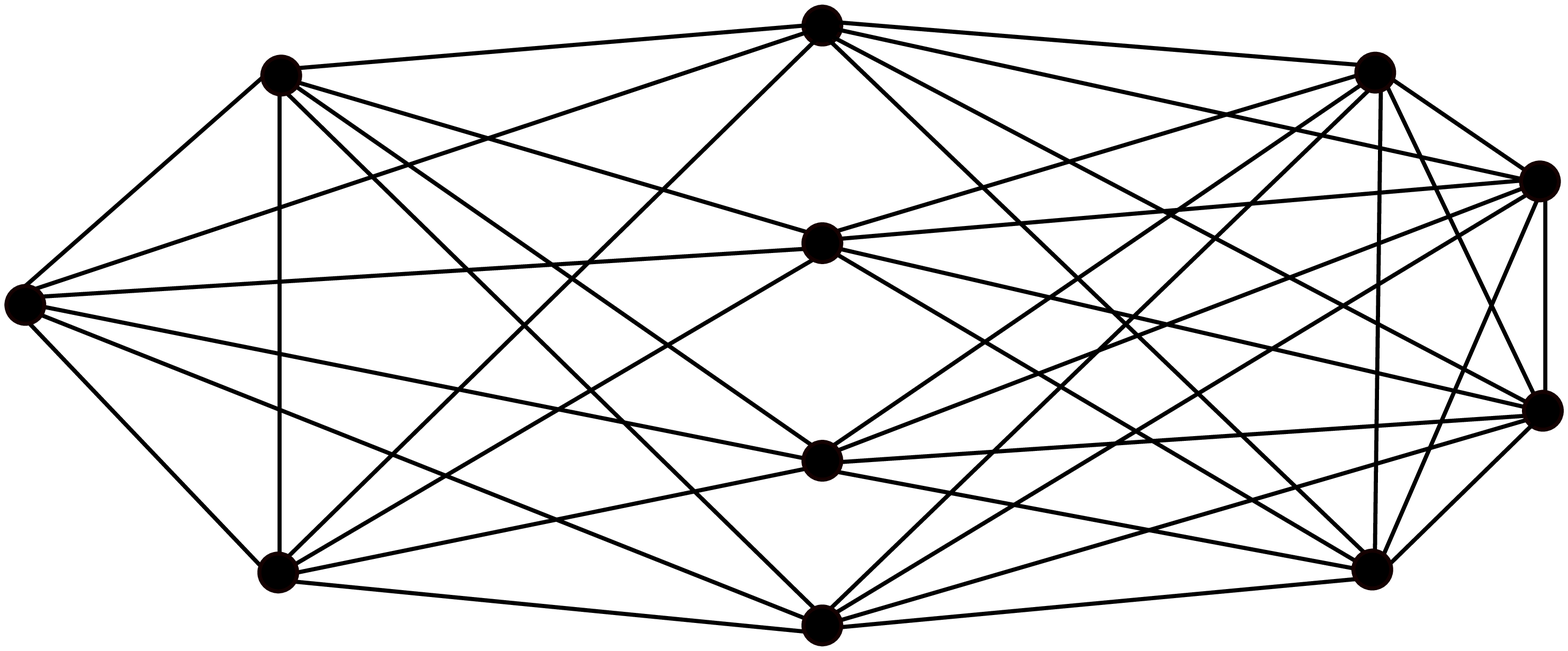}}}$\\ \hline
		4  &$\vcenter{\hbox{\includegraphics[scale=.14]{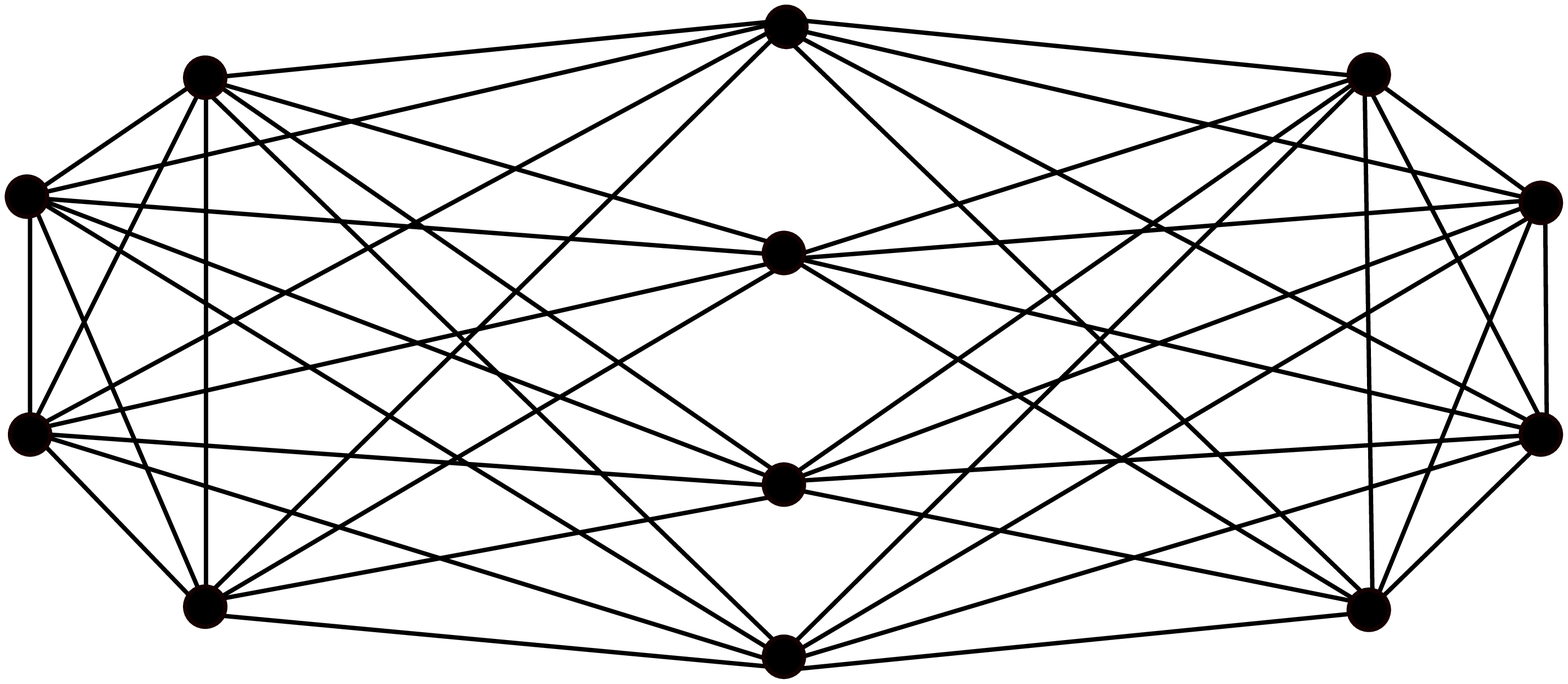}}}$
           &$\vcenter{\hbox{\includegraphics[scale=.14]{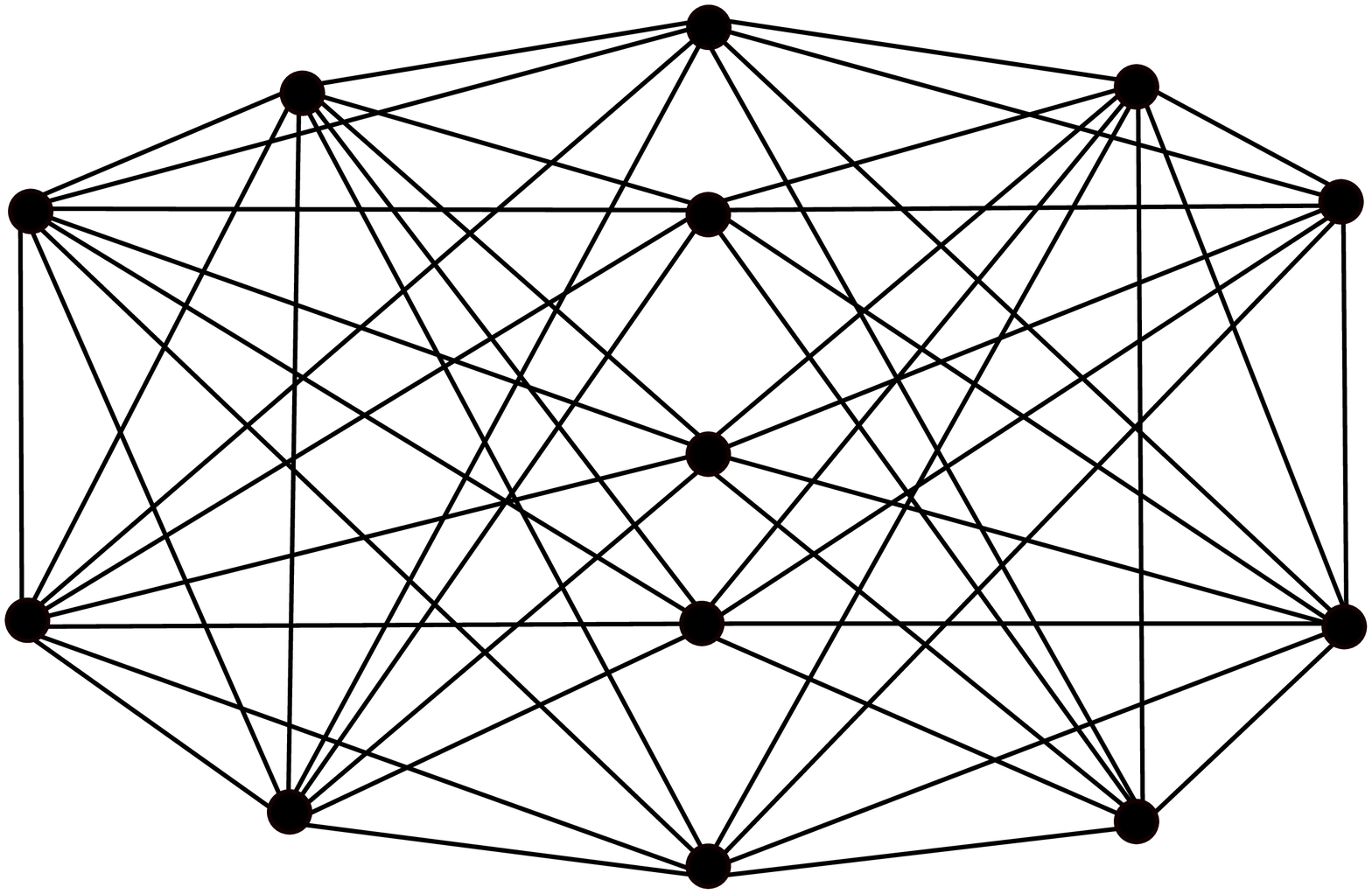}}}$
		   &$\vcenter{\hbox{\includegraphics[scale=.14]{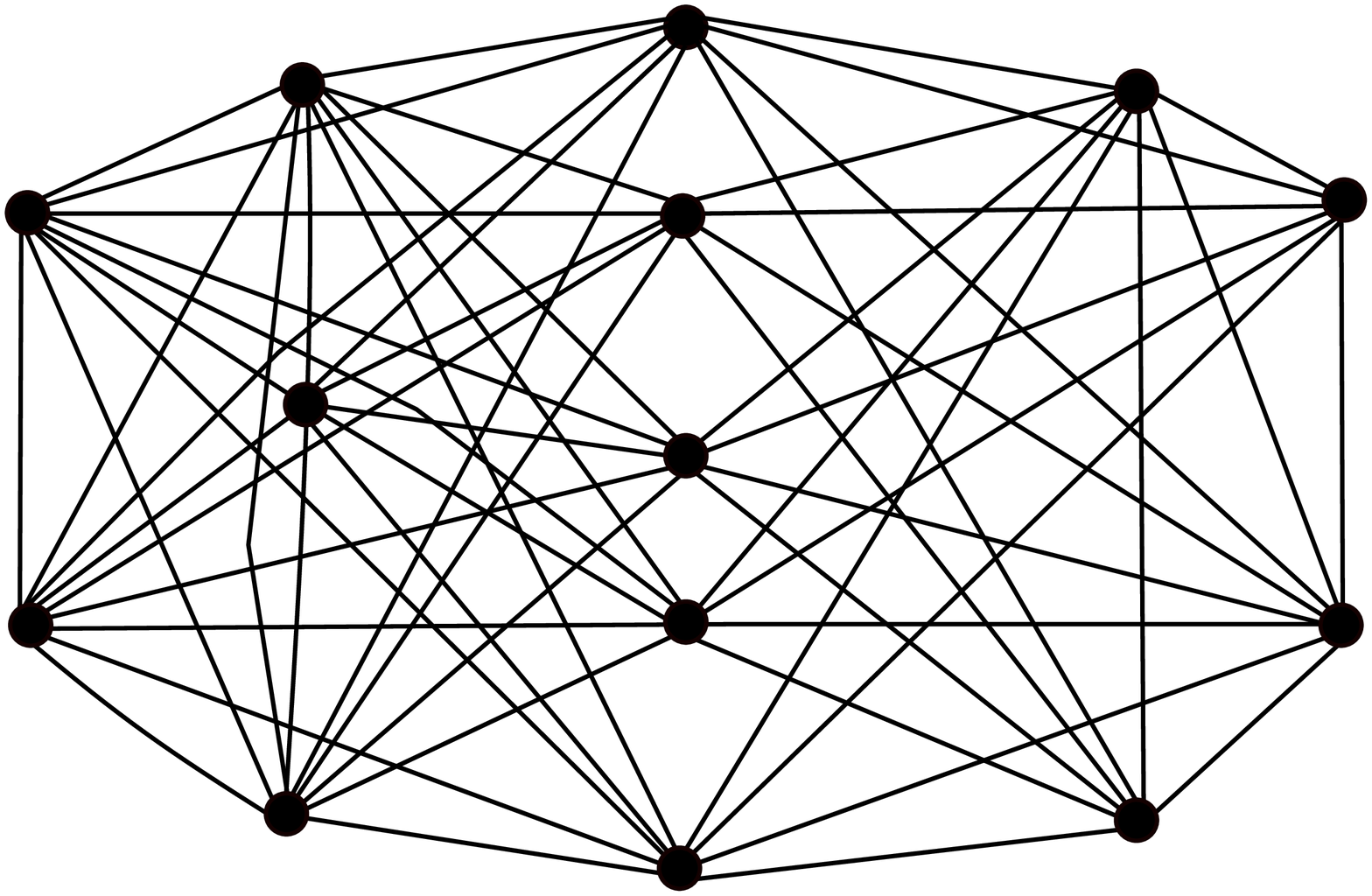}}}$\\ \hline
	\end{tabular}
	\caption{Graphs of $\mathcal{G}_{w}$.} \label{tabla2}
\end{table}

It is clear from the definition of join of graphs that $\mathcal{G}_{nmr}=(K_{n}\sqcup K_{m})\nabla \overline{K}_{r}$ is connected for $n,m,r>0$.
It is easy to verify that a connected graph is $d$-regular if and only if the largest eigenvalue is $d$ and $[1,1,\ldots,1]$ is an eigenvector of the graph.
We use this result to show that the graph  $\mathcal{G}_{tt(t+1)}$ is regular.

In many papers, $\Delta$ and $\delta$ are used to denote the maximum and minimum degree of a graph, we use the same notation in the following results.

A \emph{cograph} is defined recursively as follows: any single vertex graph is a cograph, if $G$ is a cograph, then so is its complement graph 
$\overline{G}$, and if $G_1$ and $G_2$ are cographs, then so is their disjoint union $G_1\sqcup G_2$. Equivalently, a cograph is a graph which  
does not contain the path on 4 vertices as an induced subgraph \cite{Corneil}.  The proof to the following proposition is straightforward 
and we omit it.  

\begin{proposition}\label{Propositon:cograph}
	The graphs  $\mathcal{G}_{w}^{*}$ and $\mathcal{G}_{w}$, as defined in \eqref{graph:definition} and \eqref{graph:definition_noloops}, 
	 are cographs.
\end{proposition}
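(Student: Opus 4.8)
The plan is to use the closure properties built into the recursive definition of cographs, together with the standard identity expressing a join in terms of complement and disjoint union. The crucial observation is that for any two graphs $G_1$ and $G_2$ one has $G_1 \nabla G_2 = \overline{\overline{G_1} \sqcup \overline{G_2}}$: a vertex of $G_1$ and a vertex of $G_2$ are adjacent in the join precisely when they lie in different components of $\overline{G_1}\sqcup \overline{G_2}$, while adjacencies internal to $G_1$ or to $G_2$ are complemented twice and hence preserved. Since cographs are closed under both complementation and disjoint union by definition, this identity immediately shows that the class of cographs is also closed under the join operation $\nabla$. I would state this closure fact first, as it is the engine of the whole argument.

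Next I would verify that each of the three elementary building blocks is a cograph. The empty graph $\overline{K}_r$ is the disjoint union of $r$ one-vertex graphs, so it is a cograph; the complete graph $K_t$ is the complement $\overline{\overline{K}_t}$ of an empty graph, hence a cograph as well. Consequently $K_n \sqcup K_m$ is a disjoint union of cographs and therefore a cograph, and applying the join closure established above, $(K_n \sqcup K_m)\nabla \overline{K}_r$ is a cograph. Since $\mathcal{G}_w$ has exactly this form in each of the three residue classes of $w$ modulo $3$ (see \eqref{graph:definition_noloops}), every $\mathcal{G}_w$ is a cograph.

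It remains to treat the graphs $\mathcal{G}_w^{*}$, which carry loops at the vertices of the two complete parts, and I expect this to be the only point requiring any care. It is handled by observing that loops are irrelevant to the cograph property: the forbidden-subgraph characterization concerns induced copies of the loopless path on four vertices, and adding or deleting loops alters neither the vertex set nor the family of induced subgraphs on four distinct vertices. Hence $\mathcal{G}_w^{*}$ and $\mathcal{G}_w$ have the same underlying simple graph and are cographs simultaneously. Combining the two cases completes the argument for both families. (Alternatively, one could give a single uniform proof via the $P_4$-free characterization, checking directly that $(K_n\sqcup K_m)\nabla \overline{K}_r$ admits no induced $P_4$; but the constructive route above is shorter and makes the loop-invariance transparent.)
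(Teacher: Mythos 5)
Your proof is correct; note that the paper itself omits the argument entirely (``The proof to the following proposition is straightforward and we omit it''), so there is no authorial proof to diverge from. Your route --- the identity $G_1 \nabla G_2 = \overline{\overline{G_1} \sqcup \overline{G_2}}$, the closure of cographs under complement and disjoint union, and the observation that loops do not affect the $P_4$-free characterization --- is exactly the standard argument the authors presumably intended, and it correctly handles the one point (the loops in $\mathcal{G}_w^{*}$) that genuinely needs a remark.
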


Now we give some consequences of this proposition. Royle \cite{Royle} proves that the rank of a cograph $X$ is equal to the number of 
distinct non-zero rows of its adjacency matrix. Using this result  we have as corollaries of Proposition \ref{Propositon:cograph} these results. 
The rank of the adjacency matrix of $\mathcal{G}_{t(t+1)(t+1)}$  is   $2 (t + 1)$; the rank of the adjacency matrix of $\mathcal{G}_{ttt}$  is 
$2 t + 1$; and the rank of the adjacency matrix of $\mathcal{G}_{tt(t+1)}$  is  $2 t + 1$. 

Two vertices are \emph{duplicate} if their open neighborhoods are the same \cite{Biyikoglu}. Two vertices in 
$\overline{K}_{t}$ or  $\overline{K}_{t+1}$ with $t>1$, are duplicate vertices of a graph  $\mathcal{G}_{w}$ as in \eqref{graph:definition_noloops}.   
 Therefore, by \cite{Biyikoglu,Royle} we know that $0$ is an eigenvalue of  $\mathcal{G}_{w}$   (similarly we have that $-1$ is an is an 
 eigenvalue of $\mathcal{G}_{w}$) see Proposition \ref{Corollary:1} for the whole set of eigenvalues. 
 
In the following part we present families of regular and almost-regular connected graphs and their characteristics.
 
\begin{theorem}\label{theorem:regular}
	The graph $\mathcal{G}_{w}$ is regular if and only if $w=3t+1$. Moreover, $\mathcal{G}_{tt(t+1)}$ is $2t$-regular.
\end{theorem}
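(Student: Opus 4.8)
The plan is to reduce the claim to a direct degree count, using the elementary fact that a graph is $d$-regular precisely when every vertex has degree $d$, and to read the degrees straight off the join structure $(K_{n}\sqcup K_{m})\nabla\overline{K}_{r}$ recorded in \eqref{graph:definition_noloops}. Recall that in a join $G_1\nabla G_2$ with $|V(G_1)|=n_1$ and $|V(G_2)|=n_2$, each vertex keeps its neighbors inside its own factor and acquires every vertex of the other factor; hence $\deg(v)=\deg_{G_1}(v)+n_2$ for $v\in V(G_1)$ and $\deg(v)=\deg_{G_2}(v)+n_1$ for $v\in V(G_2)$.

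First I would classify the vertices of $\mathcal{G}_{nmr}=(K_{n}\sqcup K_{m})\nabla\overline{K}_{r}$ into three types and record their degrees: a vertex of the $K_{n}$-block has degree $(n-1)+r$, a vertex of the $K_{m}$-block has degree $(m-1)+r$, and a vertex of $\overline{K}_{r}$ has degree $0+(n+m)=n+m$. The graph is regular exactly when these three values coincide. Equating the first two forces $n=m$, and then equating $(n-1)+r$ with $n+m=2n$ forces $r=n+1$. Thus $\mathcal{G}_{nmr}$ is regular if and only if $n=m$ and $r=n+1$, in which case the common degree is $2n$.

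Next I would substitute the three families of \eqref{graph:definition_noloops}. For $w=3t$ we have $(n,m,r)=(t,t,t)$, so $r=t\neq t+1=n+1$; concretely the $K_{t}$-vertices have degree $2t-1$ while the $\overline{K}_{t}$-vertices have degree $2t$, so the graph is not regular. For $w=3t+2$ we have $(n,m,r)=(t,t+1,t+1)$, so $n\neq m$, and the two complete blocks already yield degrees $2t$ and $2t+1$; again not regular. Only $w=3t+1$, with $(n,m,r)=(t,t,t+1)$, satisfies $n=m$ and $r=n+1$, giving a regular graph of common degree $2t$. This settles both the equivalence and the ``moreover'' statement. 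Equivalently, and matching the criterion quoted just before the theorem, one checks that $A\mathbf{1}=2t\,\mathbf{1}$ for the adjacency matrix $A$ of $\mathcal{G}_{tt(t+1)}$, so the all-ones vector is an eigenvector whose eigenvalue equals the (necessarily largest) degree $2t$.

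There is no serious obstacle here; the only care needed is the bookkeeping across the three vertex types and the three residue classes of $w$, together with the convention that $\overline{K}_{r}$ contributes no internal edges. I would also note the degenerate small-$t$ cases, where a complete block may reduce to a single vertex: since $K_1$ has internal degree $0$, all the degree formulas above remain valid, so the conclusion is unaffected.
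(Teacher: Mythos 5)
Your proof is correct, and for the forward direction it takes a genuinely different, more elementary route than the paper. To show that $\mathcal{G}_{tt(t+1)}$ is $2t$-regular, the paper goes back to the adjacency matrix extracted from the determinant Hosoya triangle $\bmod\ 2$: it proves a claim (via Fibonacci gcd and parity arguments) identifying exactly which entries are even, deduces the block form \eqref{eq6}, and then invokes the criterion that a connected graph is $d$-regular if and only if $d$ is the largest eigenvalue and $[1,\ldots,1]$ is an eigenvector. You instead take the join decomposition \eqref{graph:definition_noloops} as given and read off every vertex degree from the formula $\deg(v)=\deg_{G_i}(v)+n_{3-i}$, arriving at the clean general criterion that $(K_{n}\sqcup K_{m})\nabla\overline{K}_{r}$ is regular precisely when $n=m$ and $r=n+1$; specializing to the three residue classes of $w$ then settles both directions at once. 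This is shorter and avoids re-deriving structure already established in Proposition \ref{graph:structure}, whereas the paper's route has the side benefit of exhibiting the explicit block matrix and the eigenvalue $2t$ that are used again later. For the converse (the cases $w=3t$ and $w=3t+2$) your argument and the paper's essentially coincide: both compute the maximum and minimum degrees from the join structure and observe that they differ by $1$. Your degree values ($2t-1$ versus $2t$ when $w=3t$, and $2t$ versus $2t+1$ when $w=3t+2$) agree with the paper's $\Delta$ and $\delta$, and your remark about the degenerate $K_{1}$ blocks for small $t$ is a point the paper leaves implicit.
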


\begin{proof}
First we show that the graph $\mathcal{G}_{tt(t+1)}$ is regular.
The adjacency matrix of $\mathcal{G}_{tt(t+1)}$ is $A=[a_{ij}]$, where $a_{ii}=0$ and $a_{ij}=\begin{vmatrix}
	F_{i+1} & F_{i}\\
	F_{j} & F_{j+1} \\
	\end{vmatrix} \bmod 2$ for $i\ne j$. Therefore,
	
	\[a_{ij}=
	\begin{cases}	
 (F_{i+1}F_{j+1}-F_{i}F_{j}) \bmod 2, & \text{ for } i\ne j;\\
 0, & \text{ for } i= j.
	\end{cases}
	\]
	
{\bf Claim}. The entry $a_{ij}$ of $A$, where $a_{ij}\in \xi=\{a_{1,3k+1}, a_{2,3k},a_{3,3k-1},\ldots, a_{3k+1,1}\}$,  is equal to 
$0 \bmod 2$ for $1\le k\le t$ and 1 otherwise. (Sometimes we use $a_{i,j}$ instead of $a_{ij}$ to avoid ambiguities.)

Proof of Claim. Consider the entry $a_{i,3k+2-i}=F_{i+1}F_{3k+2-i+1}-F_{i}F_{3k+2-i}$. We prove the claim by three cases,  
$i=3s-1$, $i=3s,$ and $i=3s+1$ for $s>1$.  Indeed, if $i=3s-1$ it is easy to see that $F_{\gcd(i+1,3k+2-i)}$ and  
$F_{\gcd(i,3k+2-i+1)}$ divide $a_{i,3k+2-i}$, and $F_{\gcd(i+1,3k+2-i)}$ is even (every third Fibonacci number is even).   
If $i=3s$, then in this case also $F_{\gcd(i+1,3k+2-i)}$ and $F_{\gcd(i,3k+2-i+1)}$ divide $a_{i,3k+2-i}$ and  $F_{\gcd(i,3k+2-i+1)}$ is even.  
Finally, if $i=3s+1$, by Proposition \ref{gcd:property}, $a_{i,3k+2-i}$ is even. However, the other entries $a_{ij}$ of $A$ for $a_{ij}\notin \xi$ 
are all odd. The matrix $A$ is therefore of the form

\begin{equation} \label{eq6}
A =  \left[ {\begin{array}{lll|l}
	A(K_{t}) & &\;\bigzero &  \\
	&&& \\
	&&&\;\;\;\; J^{T}\\
  \;\;\;\; \bigzero&& A(K_{t}) &  \\
&&&\\
\hline
&&&\\
&	J &   &A(\overline{K}_{t+1})\\
\end{array} } \right]_{(3t+1)\times(3t+1)}
\end{equation}
where $A(K_{t})$ and $A(K_{t+1})$ are the adjacency matrices of the complete graphs $K_{t}$ and $K_{t+1}$ and  $J=[j_{rl}]_{(t+1)\times t}$ with $j_{rl}=1$
for $1\le r\le (t+1)$ and $1\le l\le t$ ($J$ is the matrix of $1$'s).

From $A$ we see that the graph is connected. It is easy to verify that $\lambda=2t$ is the largest eigenvalue of the $A$ and that
$[1,1,\ldots,1]_{3t+1}$ is an eigenvector of $A_{w}$. This proves that $\mathcal{G}_{tt(t+1)}$ is a $2t$-regular graph.

Conversely, we show that if $w\ne 3t+1$, then $\mathcal{G}_{w}$  is not regular.
We prove this using cases.

{\bf Case $w=3t+2$}. By the definition of join of graphs we know that $\mathcal{G}_{t(t+1)(t+1)}$ is connected with maximum degree $\Delta=2t+1$ and
minimum degree $\delta= 2t$. The number of vertices of degree $\Delta$ is $2t$ while the number of vertices of degree $\delta$ is $t$.
Since $\Delta-\delta=1$, $\mathcal{G}_{t(t+1)(t+1)}$ is almost-regular.

{\bf Case  $w=3t$}. Recall that $\mathcal{G}_{ttt}$ is connected and $\Delta=2t$ and $\delta= 2t-1$. The number of vertices of degree $\Delta$ is $t$
while the number of vertices of degree $\delta$ is $2t$. Once again, since $\Delta-\delta=1$, $\mathcal{G}_{ttt}$ is almost-regular.
This completes the proof.
\end{proof}	
	
We observe that the graph $\mathcal{G}_{ttt}^{*}$ (defined in \eqref{graph:definition}) has loops at all vertices that have degree $\delta=2t-1$.
Similarly, the graph $\mathcal{G}_{t(t+1)(t+1)}^{*}$ has loops at all vertices that have degree $\delta=2t$. If we have $\mathcal{G}_{tt(t+1)}^{*}$,
then the entries of the diagonal $a_{ii}$ are even if $i=3k+1$ for $0\le k\le t$. This implies that the number of loops in $\mathcal{G}_{tt(t+1)}^{*}$ is $2t$.

Note. The graph $\mathcal{G}_{tt(t+1)}$ (defined in \eqref{graph:definition_noloops}) is not vertex transitive for $t>1$ and vertex transitive when $t=1$.
The graphs $\mathcal{G}_{tt(t+1)}$ with $t>1$ and  $\mathcal{G}_{ttt}$ with $t\ge 1$, have two orbits each. In fact, in both cases the vertices
of the graph that are part of $(K_{t}\sqcup K_{t})$, all belong to one orbit while the vertices of the empty graphs, are in a second orbit.
In the case of the graph $\mathcal{G}_{t(t+1)(t+1)}$ with $t\ge 1$, the vertices are split up into three orbits. The vertices of $(K_{t}\sqcup K_{t+1})$
that have degree $2t$ are in the first orbit while the vertices of $(K_{t}\sqcup K_{t+1})$ that have degree $(2t+1)$ are all in the second orbit and finally,
the vertices of the empty graph $\overline{K}_{t+1}$ are in the third orbit. This shows that these graphs are non-asymmetric.

\section{Integral graphs}\label{Background on IntegralGraphs}
A graph is called \emph{integral} if all its eigenvalues are integers. Now we summarize some results of
Bali\'{n}ska et al. \cite{balinska}. Let $G_i$ be a graph with $n_i$ vertices and $r_i$ regularity for $i=1,2$.
\begin{equation}\label{perfect_square}
G_{1}\nabla G_{2} \text{ is integral $\iff$ $G_{1}$ and $G_{2}$ are integral and} (r_{1}-r_{2})^{2}+4n_{1}n_{2} \text{ is a perfect square.}
\end{equation}

In 1980 Cvetkovi\'{c} \textit{et al.} \cite{cvetkovi} (see also \cite{hwang}) proved that the characteristic polynomial $P_{G_{1}\nabla G_{2}}(\lambda)$ of $G_{1}\nabla G_{2}$ equals
\begin{equation}\label{eqn:1}
(-1)^{n_{2}}P_{G_1}(\lambda)P_{\overline{G}_{2}}(-\lambda-1)+(-1)^{n_{1}}P_{G_2}(\lambda)P_{\overline{G}_{1}}(-\lambda-1)-(-1)^{n_1+n_2}P_{\overline{G}_{1}}(-\lambda-1)P_{\overline{G}_{2}}(-\lambda-1).
\end{equation}
 In particular, if $G_{i}$ is $r_{i}$-regular in the result above, then the characteristic polynomial of $G_{1}\nabla G_{2}$ is given by

 \begin{equation*}
P_{G_{1}\nabla G_{2}}(\lambda) =\frac{P_{G_{1}}(\lambda)P_{G_{2}}(\lambda)}{(\lambda-r_{1})(\lambda-r_{2})}((\lambda-r_{1})(\lambda-r_{2})-n_{1}n_{2}).
\end{equation*}

Let $A(G_1)_{m\times m}$ and $A(G_2)_{n\times n}$ be the adjacency matrices of $G_1$ and $G_2$ and let $a,c\in \mathbb{R}^{m}$ and $b,d\in \mathbb{R}^{n}$, 
where $ad^{T}$ and $bc^T$ are matrices with $1$'s as their entries.
From Zhang \textit{et al.} \cite{Zhang} (see also \cite{hwang}), we know that the adjacency matrix of $G_1\nabla G_2$ is

\begin{equation} \label{eqn:3}
M =  \left[ {\begin{array}{ll}
	A(G_{1}) & ad^{T} \\
	bc^T & A(G_{2})
	\end{array} } \right].
\end{equation}
In addition, if $\widetilde{A_{1}}=ac^{T}-A(G_{1})$ and  $\widetilde{A_{2}}=bd^{T}-A(G_{2})$, then the characteristic polynomial of $M$ is given by

\begin{equation}\label{eqn:4}
P_{M}(\lambda)=(-1)^{m}P_{\widetilde{A_{1}}}(-\lambda)P_{A(G_{2})}(\lambda)+(-1)^{n}P_{A(G_{1})}(\lambda)P_{\widetilde{A_{2}}}(-\lambda)-(-1)^{m+n}P_{\widetilde{A_{1}}}(-\lambda)P_{\widetilde{A_{2}}}(-\lambda).
\end{equation}

\subsection{An infinite family of integral graphs}\label{main_result}
In this section we give a more general result that will be important for proving results about the graphs embedded in the combinatorial triangles in this paper.
In particular, we present the necessary and sufficient condition for the graph $\mathcal{G}_{nmr}:=(K_{n}\sqcup K_{m})\nabla \overline{K}_{r}$ to be integral.

The first three parts of the following lemma are well known. Since the characteristic polynomial is multiplicative under disjoint union of graphs,
the last part of the lemma is a product of the characteristic polynomials of  $K_{n}$ and  $K_{m}$. So, we omit the complete proof of the lemma.

\begin{lemma}\label{BasicCharacteristicPoly} If $n$ is a positive integer, then

\begin{enumerate}
\item \label{CompleteN} the characteristic polynomial of $K_{n}$ is given by $p(\lambda)=(-1)^n(\lambda-(n-1))(\lambda+1)^{n-1}$.

\item \label{BipartiteN}The characteristic polynomial of $K_{m,n}$ is given by $p(\lambda)=(-1)^{m+n}\lambda^{m+n-2}(\lambda^2-nm)$.

\item \label{EmptyN} The characteristic polynomial of $\overline{K}_{n}$ is given by $p(\lambda)=(-1)^n\lambda^n$.

\item  \label{DisjointUnionN}The characteristic polynomial of  $K_{m}\sqcup K_{n}$ is given by
$$p(\lambda)=(-1)^{m+n}(\lambda-(n-1))(\lambda-(m-1))(\lambda+1)^{m+n-2}.$$

\end{enumerate}

\end{lemma}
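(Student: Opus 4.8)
The statement to prove is Lemma~\ref{BasicCharacteristicPoly}, which collects four standard characteristic polynomial formulas. The plan is to establish parts (1)--(3) directly from eigenvalue structure and then obtain part (4) by multiplicativity, exactly as the excerpt hints.

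For part (1), the adjacency matrix of $K_n$ is $A(K_n)=J-I$, where $J$ is the all-ones matrix. First I would recall that $J$ has eigenvalue $n$ with eigenvector $[1,1,\dots,1]^T$ and eigenvalue $0$ with multiplicity $n-1$ (since $J$ has rank one). Subtracting $I$ shifts every eigenvalue down by $1$, so $A(K_n)$ has eigenvalue $n-1$ once and eigenvalue $-1$ with multiplicity $n-1$. The characteristic polynomial $\det(\lambda I - A)$ is then $(\lambda-(n-1))(\lambda+1)^{n-1}$; to match the stated sign convention $(-1)^n(\lambda-(n-1))(\lambda+1)^{n-1}$, I would note the paper is evidently using $\det(A-\lambda I)$, which introduces the factor $(-1)^n$ for an $n\times n$ matrix. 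I would fix this convention once at the outset and apply it uniformly to all four parts.

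For part (3), $\overline{K}_n$ has adjacency matrix equal to the zero matrix, so its only eigenvalue is $0$ with multiplicity $n$, giving $(-1)^n\lambda^n$ immediately. For part (2), the complete bipartite graph $K_{m,n}$ has a rank-two adjacency matrix; I would argue that $0$ is an eigenvalue of multiplicity $m+n-2$ and that the two nonzero eigenvalues are $\pm\sqrt{mn}$, which can be seen by computing the action on vectors constant on each part or by noting $A^2$ restricted to the relevant two-dimensional space has eigenvalue $mn$. This yields $\lambda^{m+n-2}(\lambda^2-mn)$ up to the sign convention, producing the stated $(-1)^{m+n}$.

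For part (4), the key structural fact is that the characteristic polynomial is multiplicative over disjoint unions, since the adjacency matrix of $K_m\sqcup K_n$ is block diagonal. I would simply multiply the two instances of part (1): $(-1)^m(\lambda-(m-1))(\lambda+1)^{m-1}$ and $(-1)^n(\lambda-(n-1))(\lambda+1)^{n-1}$, combining the sign factors into $(-1)^{m+n}$ and the repeated factors into $(\lambda+1)^{m+n-2}$. There is no real obstacle here; the only point requiring care is keeping the sign convention consistent across all parts, so that the $(-1)$ exponents in the final product match the stated formula. Since all four computations are standard, the main work is bookkeeping rather than any genuine difficulty, which is why the authors omit the full proof.
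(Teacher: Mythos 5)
Your proposal is correct and follows essentially the same route as the paper, which simply declares parts (1)--(3) well known and obtains part (4) from the multiplicativity of the characteristic polynomial over disjoint unions; your sign bookkeeping with the $\det(A-\lambda I)$ convention checks out in all four cases. The only thing you add is the standard eigenvalue derivations ($J-I$ for $K_n$, the rank-two argument for $K_{m,n}$), which the paper deliberately omits.
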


\begin{proposition} \label{Main:thm1} Let $n,m,r\in \mathbb{Z}_{>0}$ and let  $P(\lambda)$ be the characteristic polynomial of $\mathcal{G}_{nmr}$.
\begin{enumerate}
\item  \label{Main:thm1:part1}  If $n\ne m$, then $P(\lambda)$ is equal to 
 $$(-1)^{m+n+r}\lambda^{r-1}(\lambda+1)^{m+n-2}[\lambda^{3}-(m+n-2)\lambda^{2}-((m+n)(r+1)-mn-1)\lambda-((m+n)r-2mnr)].$$
 \item  \label{Main:thm1:part2} If $n=m>1$, then $$P(\lambda)=(-1)^r\lambda^{r-1} (\lambda+1)^{2(n-1)}(\lambda-(n-1))(\lambda^2-(n-1)\lambda-2nr).$$
 Moreover,  $\mathcal{G}_{nnr}$ is integral if and only if $2nr=pq$ and $n-1=p-q$ for some $p, q\in \mathbb{Z}_{>0}$.
 \end{enumerate}
\end{proposition}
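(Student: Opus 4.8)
The plan is to compute the characteristic polynomial $P(\lambda)$ directly from the structure of $\mathcal{G}_{nmr}=(K_n\sqcup K_m)\nabla\overline{K}_r$ using the join formula \eqref{eqn:4}. Here $G_1=K_n\sqcup K_m$ and $G_2=\overline{K}_r$, so the adjacency matrix of $\mathcal{G}_{nmr}$ has the block form \eqref{eqn:3} with $A(G_1)=A(K_n\sqcup K_m)$, $A(G_2)=\mathbf{0}_{r\times r}$, and the off-diagonal blocks being all-ones matrices. To apply \eqref{eqn:4} I would first identify the auxiliary matrices $\widetilde{A_1}=ac^T-A(G_1)$ and $\widetilde{A_2}=bd^T-A(G_2)$. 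Since $G_2=\overline{K}_r$, $\widetilde{A_2}=J_r-\mathbf{0}=J_r$, the all-ones $r\times r$ matrix, whose characteristic polynomial is $(-1)^r\lambda^{r-1}(\lambda-r)$; hence $P_{\widetilde{A_2}}(-\lambda)=(-1)^r(-\lambda)^{r-1}(-\lambda-r)$. For $\widetilde{A_1}=J_{n+m}-A(K_n\sqcup K_m)$, I observe that $J_{n+m}-A(K_n\sqcup K_m)$ is the adjacency matrix (with all-ones diagonal contributions absorbed) of the complement-type graph; more precisely its off-diagonal structure is that of the complete bipartite join between the two blocks plus ones on the diagonal, so I would compute $P_{\widetilde{A_1}}$ by a short direct eigenvalue analysis rather than invoking a named graph.

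The cleanest route is to assemble all four ingredients — $P_{A(G_1)}(\lambda)$ from Lemma \ref{BasicCharacteristicPoly}\eqref{DisjointUnionN}, $P_{A(G_2)}(\lambda)=(-1)^r\lambda^r$ from part \eqref{EmptyN}, and the two tilded polynomials — and substitute into \eqref{eqn:4}. After factoring out the common powers $\lambda^{r-1}$ and $(\lambda+1)^{m+n-2}$, the remaining factor is a cubic in $\lambda$ whose coefficients I would collect to match the stated expression in part \eqref{Main:thm1:part1}. For part \eqref{Main:thm1:part2}, setting $n=m$ specializes $K_n\sqcup K_n$, which is $2(n-1)$-regular-per-component, so I can instead use the regular-graph join formula displayed just before equation \eqref{eqn:3}: with $G_1=K_n\sqcup K_n$ (whose eigenvalues are $n-1$ twice and $-1$ with multiplicity $2(n-1)$) and $G_2=\overline{K}_r$ ($0$-regular), the cubic factor should collapse so that $\lambda-(n-1)$ splits off cleanly, leaving the quadratic $\lambda^2-(n-1)\lambda-2nr$. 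I would verify this collapse by substituting $n=m$ into the general cubic and checking that $\lambda=n-1$ is a root, then dividing out.

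The integrality criterion is then a statement purely about the quadratic $q(\lambda)=\lambda^2-(n-1)\lambda-2nr$, since all the other factors $\lambda^{r-1}$, $(\lambda+1)^{2(n-1)}$, and $\lambda-(n-1)$ already have integer roots. Thus $\mathcal{G}_{nnr}$ is integral if and only if $q$ has integer roots. The roots of $q$ are integers precisely when the discriminant $(n-1)^2+8nr$ is a perfect square \emph{and} the roots have the correct parity; but rather than argue through the discriminant, I would use the fact that $q$ factors over the integers as $(\lambda-p)(\lambda+q_0)$ for positive integers $p,q_0$ exactly when the product of the roots is $-2nr$ and their sum is $n-1$. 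Writing $\lambda_1\lambda_2=-2nr$ and $\lambda_1+\lambda_2=n-1$ with one root positive and one negative (forced by the negative product), setting $p=\lambda_1>0$ and $q_0=-\lambda_2>0$ gives exactly $2nr=pq_0$ and $n-1=p-q_0$, matching the stated condition.

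I expect the main obstacle to be the bookkeeping in the first part: correctly computing $P_{\widetilde{A_1}}(-\lambda)$ for the $(n+m)$-vertex tilded matrix and then expanding the product in \eqref{eqn:4} without sign or coefficient errors, especially tracking the $(-1)^{m}$, $(-1)^n$ factors and the cancellation that produces a single cubic. Once the characteristic polynomial is in hand, parts \eqref{Main:thm1:part2} and the integrality equivalence follow by routine specialization and an elementary factoring argument over $\mathbb{Z}_{>0}$.
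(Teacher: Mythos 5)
Your plan is correct and follows essentially the same route as the paper: the paper substitutes the polynomials of Lemma \ref{BasicCharacteristicPoly} into the join formula \eqref{eqn:1} (which is just \eqref{eqn:4} in different notation, since $\widetilde{A_i}=A(\overline{G_i})+I$), recognizes $\overline{G_1}=K_{m,n}$ and $\overline{G_2}=K_r$, simplifies to the stated cubic, obtains Part~(2) by setting $n=m$ and splitting off the factor $\lambda-(n-1)$, and proves the integrality criterion by the same Vieta-type factorization of $\lambda^{2}-(n-1)\lambda-2nr$ into $(\lambda-p)(\lambda+q)$ with $p,q\in\mathbb{Z}_{>0}$. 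The only slip is describing $K_n\sqcup K_n$ as ``$2(n-1)$-regular-per-component'' (it is $(n-1)$-regular), which does not affect your argument since you list the eigenvalues correctly.
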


\begin{proof}  The proof of Part  \eqref{Main:thm1:part2} is a direct application of Part \eqref{Main:thm1:part1}, therefore, it is omitted. So, we prove Part \eqref{Main:thm1:part1}. We start by computing the characteristic polynomial  
$P(\lambda)$ of $\mathcal{G}_{nmr}$. In fact, to obtain $P(\lambda)$, we substitute the appropriate characteristic polynomials from  
Lemma  \ref{BasicCharacteristicPoly} into \eqref{eqn:1}.

Since $\mathcal{G}_{nmr}:=(K_{n}\sqcup K_{m})\nabla \overline{K}_{r}$, we set $G_1:= K_{n}\sqcup K_{m}$, $G_2:=\overline{K}_{r}$ in \eqref{eqn:1}.
Therefore, their complement graphs are $\overline{G_1}=K_{m,n}$ and $\overline{G_2}=K_r$. This and Lemma \ref{BasicCharacteristicPoly} imply that
$$P_{\;\overline{G_1}}(-\lambda-1)=(\lambda+1)^{m+n-2}((\lambda+1)^2-mn)\quad \text{ and } \quad P_{\;\overline{G_2}}(-\lambda-1)=(\lambda+r)(\lambda)^{r-1}.$$
Substituting this and $P_{G_1}(\lambda)$, $P_{G_2}(\lambda)$ (after applying Lemma  \ref{BasicCharacteristicPoly}) into \eqref{eqn:1}
gives that $P(\lambda)$ is equal to

\begin{multline*}
(-1)^{m+n+r}(\lambda+1)^{m+n-2}\lambda^{r-1}[(\lambda-n+1)(\lambda-m+1)(\lambda+r)+ \lambda((\lambda+1)^2-mn)\\
-(\lambda+r)((\lambda+1)^2-mn)].
\end{multline*}

Simplifying the polynomial further we obtain that $P(\lambda)$ is equal to
	\[
	(-1)^{m+n+r}\lambda^{r-1}(\lambda+1)^{m+n-2}[\lambda^{3}-(m+n-2)\lambda^{2}-((m+n)(r+1)-mn-1)\lambda-r(m+n-2mn)].
	\]	
This completes the proof.
\end{proof}

\begin{proposition} \label{Main:thm1:Moreover} Let $n,r\in \mathbb{Z}_{>0}$. 
 $\mathcal{G}_{nnr}$ is integral if and only if $2nr=pq$ and $n-1=p-q$ for some $p, q\in \mathbb{Z}_{>0}$.
\end{proposition}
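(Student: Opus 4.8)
The plan is to derive the integrality criterion directly from the factored characteristic polynomial in Proposition~\ref{Main:thm1}, Part~\eqref{Main:thm1:part2}. Setting $m=n$ there yields
\[
P(\lambda)=(-1)^r\lambda^{r-1}(\lambda+1)^{2(n-1)}(\lambda-(n-1))\bigl(\lambda^2-(n-1)\lambda-2nr\bigr).
\]
First I would read off that the eigenvalues coming from the first three factors are already integers: $0$ with multiplicity $r-1$, $-1$ with multiplicity $2(n-1)$, and $n-1$ once. Hence $\mathcal{G}_{nnr}$ is integral if and only if the two roots of the remaining quadratic $q(\lambda)=\lambda^2-(n-1)\lambda-2nr$ are integers. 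So the entire problem reduces to a question about when this fixed quadratic has integer roots.

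Next I would analyze $q(\lambda)$. Its roots are $\lambda=\tfrac{(n-1)\pm\sqrt{(n-1)^2+8nr}}{2}$. Since the two roots are conjugate and their sum $n-1$ is already an integer, the roots are integers exactly when the discriminant $D=(n-1)^2+8nr$ is a perfect square \emph{and} $\sqrt{D}$ has the same parity as $n-1$ (so that the numerator is even). The cleanest way to package both conditions at once is to observe that the roots are integers if and only if $q$ factors over $\mathbb{Z}$ as $q(\lambda)=(\lambda-p)(\lambda+q)$ for some $p,q\in\mathbb{Z}_{>0}$; the signs here are forced because the constant term $-2nr$ is negative (so the roots have opposite signs) and $n,r>0$. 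Expanding $(\lambda-p)(\lambda+q)=\lambda^2-(p-q)\lambda-pq$ and matching coefficients with $\lambda^2-(n-1)\lambda-2nr$ gives precisely $n-1=p-q$ and $2nr=pq$, which is the stated criterion.

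The forward direction (integral $\Rightarrow$ the system has a solution) is then immediate: if the roots are integers, they must be of the form $p$ and $-q$ with $p,q\ge 0$, and because $-2nr<0$ neither root is zero, so in fact $p,q\in\mathbb{Z}_{>0}$; reading off the coefficients gives the two equations. For the converse I would simply run the computation backwards: given $p,q\in\mathbb{Z}_{>0}$ with $n-1=p-q$ and $2nr=pq$, the quadratic factors as $(\lambda-p)(\lambda+q)$, so its roots $p,-q$ are integers, and combined with the integer eigenvalues $0,-1,n-1$ from the other factors, every eigenvalue of $\mathcal{G}_{nnr}$ is an integer.

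I do not expect a genuine obstacle here, since the heavy lifting is already done in Proposition~\ref{Main:thm1}; the only point requiring a little care is the parity/sign bookkeeping when converting ``discriminant is a perfect square'' into the symmetric system $\{2nr=pq,\ n-1=p-q\}$. The key observation that smooths this over is that the product of the roots, $-2nr$, is strictly negative, which forces exactly one positive and one negative root and thereby guarantees that the factorization takes the form $(\lambda-p)(\lambda+q)$ with $p,q$ both positive rather than merely nonnegative. This rules out degenerate cases (a zero root, or two roots of the same sign) and makes the equivalence with $p,q\in\mathbb{Z}_{>0}$ clean.
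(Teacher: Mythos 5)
Your proposal is correct and follows essentially the same route as the paper: both reduce the question to whether the quadratic factor $\lambda^2-(n-1)\lambda-2nr$ of the characteristic polynomial from Proposition~\ref{Main:thm1} splits into monic linear factors over $\mathbb{Z}$, which by matching coefficients with $(\lambda-p)(\lambda+q)$ is exactly the condition $2nr=pq$, $n-1=p-q$. Your write-up is more careful than the paper's (the sign and parity bookkeeping, and the observation that the negative constant term forces $p,q>0$, are left implicit there), but there is no substantive difference in approach.
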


\begin{proof} 
Let $p,q\in \mathbb{Z}_{>0}$ such that $pq=2nr$ and $p-q=n-1$. This implies that characteristic polynomial $P(\lambda)$ in Proposition \ref{Main:thm1} Part \eqref{Main:thm1:part1} has linear factors.
	
Conversely, if $\mathcal{G}_{nnr}=(K_{n}\sqcup K_{n})\nabla \overline{K}_{r}$ with either $2nr\ne pq$ or $n-1\ne p-q$ for every $p,q\in \mathbb{Z}_{>0}$,
then $(\lambda^2-(n-1)\lambda-2nr)$ does not have linear factors.
\end{proof}

Another way to see why $\mathcal{G}_{nnr}$ is integral, is as follows.  Let us first note that $(K_{n}\sqcup K_{n})$ is $(n-1)$-regular with $2n$ vertices and
$\overline{K}_{r}$ is 0-regular with $r$ vertices. In addition, from  \cite{balinska,barik} we have that $(K_{n}\sqcup K_{n})$ and $\overline{K}_{r}$
are both integral. Therefore, substituting $(n-1)$, $0$ and $2n$ into \eqref{perfect_square} or in the expression $(r_{1}-r_{2})^{2}+4n_{1}n_{2}$ shows that
it is a perfect square. Therefore, $(r_{1}-r_{2})^{2}+4n_{1}n_{2} =(n-1)^2+8nr$.  Since $pq=2nr$ and $p-q=n-1$ (for $p,q\in \mathbb{Z}_{>0}$),
we have $(n-1)^2+8nr=(p+q)^2$. This proves that $\mathcal{G}_{nnr}$ is integral.

\subsection{Integral Graphs from Combinatorial Triangles}
In this section we present several results that provide the criteria for the graphs given in \eqref{graph:definition} and \eqref{graph:definition_noloops},
to be integral. In particular, we present families of integral and non-integral graphs with at most five distinct eigenvalues.

\subsection{Graphs without loops}
In this section we give characteristic polynomials of graphs with only linear factors over the set of integers. We also give characteristic
polynomials of certain graphs that do not factor completely over the set of integers.
Finally, we provide a necessary and sufficient for the graphs to be integral.
 Note that these graphs are defined in \eqref{graph:definition_noloops} on Page \pageref{graph:definition_noloops}.

\begin{proposition}\label{Corollary:1}
For $t>0$, these hold
\begin{enumerate}[(a)]
  \item \label{poly:1} the characteristic polynomial of $\mathcal{G}_{tt(t+1)}$ is given by

  \[P(\lambda)=
\begin{cases} \lambda^2(\lambda+2)(\lambda-2),   & \mbox{ if }\;   t=1;\\
\lambda^{t}(\lambda-2t)\left((\lambda+1)^2-t^{2}\right)(\lambda+1)^{2(t-1)}, & \text{ if } t>1.
\end{cases}
\] 
 \item \label{poly:2} The characteristic polynomial of $\mathcal{G}_{ttt}$ is given by

\[P(\lambda)=
	\begin{cases} \lambda(\lambda^{2}-2),                                           & \mbox{ if }\;   t=1;\\
	\lambda^{t-1}(\lambda+1)^{2(t-1)}(\lambda-t+1)(\lambda^{2}-(t-1)\lambda-2t^{2}),& \text{ if } t>1.
	\end{cases}
\] 
 \item \label{poly:3} The characteristic polynomial of $\mathcal{G}_{t(t+1)(t+1)}$ is given by

\[P(\lambda)=
	\begin{cases} \lambda(\lambda+1)(\lambda^{3}-\lambda^{2}-6\lambda+2),                            & \mbox{ if }\;   t=1;\\
	\lambda^{t}(\lambda+1)^{2t-1}(\lambda^{3}+(1-2t)\lambda^{2}-(t^{2}+4t+1)\lambda+(2t^{2}-1)(t+1)), & \text{ if } t>1.
	\end{cases}
\] 

\end{enumerate}

\end{proposition}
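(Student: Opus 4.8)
The plan is to derive each characteristic polynomial directly from Proposition~\ref{Main:thm1}, since the three graphs in question are exactly the instances $\mathcal{G}_{nmr}$ corresponding to the values $(n,m,r)$ dictated by the congruence class of $w$. Specifically, $\mathcal{G}_{tt(t+1)}$ is $\mathcal{G}_{nmr}$ with $n=m=t$ and $r=t+1$; $\mathcal{G}_{ttt}$ is the case $n=m=t$, $r=t$; and $\mathcal{G}_{t(t+1)(t+1)}$ is the case $n=t$, $m=t+1$, $r=t+1$. For parts \eqref{poly:1} and \eqref{poly:2}, which have $n=m$, I would invoke the already-factored form in Proposition~\ref{Main:thm1} Part~\eqref{Main:thm1:part2}; for part \eqref{poly:3}, which has $n\ne m$, I would use the cubic-bracket form in Part~\eqref{Main:thm1:part1}. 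Thus the bulk of each proof is a substitution of the specific parameters into a polynomial already established earlier in the paper.

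For part \eqref{poly:1}, substituting $n=t$, $r=t+1$ into the formula of Part~\eqref{Main:thm1:part2} gives the factor $(\lambda^2-(t-1)\lambda-2t(t+1))$, and I would verify by inspection (or by the quadratic formula, whose discriminant is $(t-1)^2+8t(t+1)=(3t+1)^2$) that this factors as $(\lambda-2t)(\lambda+t+1)$. Combining this with the $(\lambda-(t-1))$ factor coming from the $K_t\sqcup K_t$ block and regrouping yields the stated expression $\lambda^{t}(\lambda-2t)\big((\lambda+1)^2-t^2\big)(\lambda+1)^{2(t-1)}$, where the factor $(\lambda+1)^2-t^2=(\lambda+1-t)(\lambda+1+t)$ absorbs both $(\lambda-(t-1))$ and $(\lambda+t+1)$. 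The exponent bookkeeping on $\lambda^{r-1}=\lambda^{t}$ and $(\lambda+1)^{m+n-2}=(\lambda+1)^{2(t-1)}$ follows directly. For part \eqref{poly:2}, substituting $r=t$ into the same Part~\eqref{Main:thm1:part2} formula produces $\lambda^{t-1}(\lambda+1)^{2(t-1)}(\lambda-(t-1))(\lambda^2-(t-1)\lambda-2t^2)$ essentially verbatim, so almost no algebra is needed beyond matching exponents.

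For part \eqref{poly:3}, the substitution $n=t$, $m=t+1$, $r=t+1$ into the cubic bracket of Part~\eqref{Main:thm1:part1} requires computing the coefficients $m+n-2=2t-1$, $(m+n)(r+1)-mn-1=(2t+1)(t+2)-t(t+1)-1=t^2+4t+1$, and $(m+n)r-2mnr=(2t+1)(t+1)-2t(t+1)^2$, which should simplify to $-(2t^2-1)(t+1)$ up to sign; I would track the leading sign $(-1)^{m+n+r}=(-1)^{3t+2}$ carefully and confirm it matches the displayed normalization. Since $t\ne t+1$, the cubic does not factor over the integers in general, which is why it is left unfactored and consistent with these being the non-integral family. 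The main obstacle is purely arithmetic: correctly simplifying the constant and linear coefficients of the cubic in part \eqref{poly:3} and reconciling signs so that the prefactor $\lambda^{t}(\lambda+1)^{2t-1}$ appears as stated. Finally, each $t=1$ base case must be checked separately, since Part~\eqref{Main:thm1:part2} was proved for $n=m>1$; I would handle $t=1$ by direct computation of the small adjacency matrices (or by noting the degenerate factorizations when an exponent vanishes) to confirm the piecewise formulas.
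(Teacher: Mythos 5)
Your proposal is correct and takes essentially the same approach as the paper, which simply declares the result ``straightforward from Proposition~\ref{Main:thm1}'': you substitute the appropriate $(n,m,r)$ into Parts~\eqref{Main:thm1:part2} and \eqref{Main:thm1:part1}, and your factorization of $\lambda^2-(t-1)\lambda-2t(t+1)$ via the discriminant $(3t+1)^2$ and your coefficient computations for the cubic all check out. The only difference is that you supply the arithmetic details and the $t=1$ verification that the paper omits.
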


The proof of Proposition \ref{Corollary:1} is straightforward from  Proposition \ref{Main:thm1}. The \emph{energy} of a graph $G$ is the sum 
of the absolute values of the eigenvalues of $G$. A direct consequence of Proposition \ref{Corollary:1} Part \eqref{poly:2} is that the energy of  
$\mathcal{G}_{tt(t+1)}$ is $6t-2$, for $t>0$. 

We now give the proof  for the necessary and sufficient condition for the graphs from the symmetric matrices in $\mathcal{H}\bmod 2$ to be integral.
We start with a lemma that will be important for showing that $\mathcal{G}_{ttt}$ and $\mathcal{G}_{t(t+1)(t+1)}$ are not integral graphs.

\begin{lemma}\label{not:integral:new}
If $t\in \mathbb{Z}_{>0}$, then these polynomials do not factor with monic linear factors in $\mathbb{Z}[x]$
\begin{enumerate}[(a)]
       \item \label{part:a} $p_{1}(x)=x^{2}+(1-t)x-2t^{2}$,
       \item \label{part:b} $p_{2}(x)=x^{3}+(1-2t)x^{2}-(t^{2}+4t+1)x+(2t^{3}+2t^{2}-t-1)$,
       \item \label{irr:polyn1} $p_{3}(x)=x^{2}-tx-2t(t+1)$,
       \item \label{irr:polyn2} $p_{4}(x)=x^{3}-(2t+1)x^{2}-(t+1)^{2}x+2t(t+1)^{2}$.
     \end{enumerate}
\end{lemma}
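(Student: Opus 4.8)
The plan is to reduce the statement to a single clean assertion. Since each of $p_1,p_2,p_3,p_4$ is monic with integer coefficients, "factoring with monic linear factors over $\mathbb{Z}[x]$" means exactly possessing a monic linear factor $x-a$ with $a\in\mathbb{Z}$, i.e.\ an integer root (any rational root of a monic integer polynomial is an integer). So it suffices to show that none of the four polynomials has an integer root for any $t\in\mathbb{Z}_{>0}$. I would split this into the quadratic cases (a), (c) and the cubic cases (b), (d), using two different elementary techniques.

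For the quadratics I would argue via the discriminant, since a monic integer quadratic has an integer root precisely when its discriminant is a perfect square. Here $D_1=(1-t)^2+8t^2=9t^2-2t+1$ for $p_1$ and $D_3=t^2+8t(t+1)=9t^2+8t$ for $p_3$. The point is that each discriminant is trapped strictly between two consecutive perfect squares: for $t\ge 1$ one has $(3t-1)^2<D_1<(3t)^2$ (the two gaps being $4t$ and $2t-1$), and $(3t+1)^2<D_3<(3t+2)^2$ (gaps $2t-1$ and $4t+4$). As no perfect square lies strictly between consecutive squares, neither $D_1$ nor $D_3$ is a square, so $p_1$ and $p_3$ have no integer root.

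For the cubics I would trap all three real roots strictly between consecutive integers, so that none can be an integer; since a cubic has exactly three complex roots, exhibiting three real roots in three disjoint open unit intervals accounts for every root. The delicate step is choosing the right integer test points. After rescaling $x=ty$ the leading part of both cubics is $t^3(y-2)(y-1)(y+1)$, so the roots cluster near $-t,\,t,\,2t$, but the correct neighboring integers are shifted. Concretely I would evaluate $p_2$ at $-t-2,-t-1,t-1,t,2t,2t+1$ and record the factored values $p_2(-t-2)=-(5t+3)(t+1)$, $p_2(-t-1)=t(t+1)$, $p_2(t-1)=t(t+1)$, $p_2(t)=-(t+1)^2$, $p_2(2t)=-(2t+1)(t+1)$, $p_2(2t+1)=t(t+1)$; for $t\ge 1$ these alternate in sign across the disjoint intervals $(-t-2,-t-1)$, $(t-1,t)$, $(2t,2t+1)$, placing one root in each. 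Likewise for $p_4$ I would use $-t-1,-t,t,t+1,2t+1,2t+2$, where $p_4(-t-1)=-(t+1)^2$, $p_4(-t)=t(5t+3)$, $p_4(t)=t(t+1)$, $p_4(t+1)=-(t+1)^2$, $p_4(2t+1)=-(t+1)^2$, $p_4(2t+2)=2(t+1)^2$, giving roots in $(-t-1,-t)$, $(t,t+1)$, $(2t+1,2t+2)$. Hence $p_2$ and $p_4$ have no integer root.

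The main obstacle lies entirely in the cubic cases, namely in selecting evaluation points that genuinely separate the roots. The naive guesses $-t,\,t,\,2t$ fail, since for instance $p_2(-t)=7t^2-1$ and $p_2(-t-1)=t(t+1)$ are both positive and do not bracket a root; the shift to $-t-2,\,t-1,\,2t$ for $p_2$ and to $-t-1,\,t,\,2t+1$ for $p_4$ is exactly what makes the signs alternate. The one routine-but-essential verification is that the six listed evaluations factor as stated and that the three unit intervals are pairwise disjoint for every $t\ge 1$ (which holds since $-t-1<t-1<t<2t$ and $-t<t<t+1<2t+1$). Everything else is immediate.
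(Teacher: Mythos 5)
Your proof is correct, and for the cubic parts it takes a genuinely different route from the paper's. On the quadratics you essentially coincide with the paper: it handles $p_3$ exactly as you do, trapping the discriminant $9t^2+8t$ strictly between $(3t+1)^2$ and $(3t+2)^2$; for $p_1$ it instead appeals to a parity comparison between the discriminant $9t^2-2t+1$ and $1-t$, so your uniform bound $(3t-1)^2<9t^2-2t+1<(3t)^2$ is the same idea applied more systematically (and arguably more robustly, since those two quantities in fact always share parity). The real divergence is in parts (b) and (d): the paper assumes an integer root $q$, uses the rational root theorem to bound $q$ by the constant term, and then runs a multi-case analysis on the size of $q$ relative to $t$ to reach a contradiction --- in part (b) it restricts to $1\le q\le(2t^2-1)(t+1)$, even though $p_2$ has a negative real root (in your interval $(-t-2,-t-1)$), so negative divisors would also need to be ruled out there. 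You instead localize all three real roots of each cubic in pairwise disjoint open intervals between consecutive integers via explicit sign changes; since a cubic has exactly three roots, this accounts for every root and none can be an integer. I verified your twelve evaluations ($p_2$ at $-t-2,-t-1,t-1,t,2t,2t+1$ and $p_4$ at $-t-1,-t,t,t+1,2t+1,2t+2$) and the stated factorizations and disjointness for $t\ge1$; everything checks. Your localization argument is shorter, reduces to routine verification, and treats positive and negative candidates simultaneously, at the price of having to find the right (shifted) test points; the paper's divisor-based case analysis requires no such cleverness but is considerably longer and more delicate.
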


\begin{proof}
Proof of Part \eqref{part:a}. Since the discriminant of $p_{1}(x)$ and $(1-t)$ have distinct parity, using the quadratic formula we have that any
root of the polynomial $p_{1}(x)$ has two as the denominator.

 Proof of Part \eqref{part:b}. We prove this part by contradiction. Let us assume that $q\in \mathbb{Z}$ is a root of $p_{2}(x)$. So, $q$ must divide
 the independent coefficient $(2t^{3}+2t^{2}-t-1)=(2t^2-1)(t+1)$. Clearly, $q\ne 0$ and $1\le q\le (2t^2-1)(t+1)$. Since $p(q)=0$,
 we have $q^{3}+(1-2t)q^{2}-(t^{2}+4t+1)q+(2t^{3}+2t^{2}-t-1)=0$.
Rewriting the cubic expression we have,	
\begin{equation}\label{eqn:roots}
       q^{3}+(1-2t)q^{2}-(t^{2}+4t+1)q = (1-2t^2)(t+1).
\end{equation}
	
Now we consider the following five cases:

{\bf Case $q=1$.} Substituting $q=1$ in \eqref{eqn:roots} we obtain that $1-6t-t^2>(1-2t^2)(t+1) $ if $t>1$. It is now easy to see that $1-6t-t^2$
is greater than the right hand side of \eqref{eqn:roots} for $t>1$.
		
{\bf Case $1<q<t+1$.} Since both $q$ and $t$ are integers, there is an integer $h\ge 2$ such that $t+1=q+h$.
We now prove that, substituting  $q=t+1-h$ in \eqref{eqn:roots} the left hand side of \eqref{eqn:roots} is greater than its right hand side.
Thus, $q^{3}+(1-2t)q^{2}-(t^{2}+4t+1)q=(1-2t^3-2t-5t^2)-h^3+4h^2-4h+h^2t+2ht^2$.
We observe that for $h\ge 2$,
		$(1-2t^3-2t-5t^2)-h^3+4h^2-4h+h^2t+2ht^2\ge (1-2t^3-2t-5t^2)+4t+4t^2$.
		
It is easy to check that,	
		  \[(1-2t^3-2t-5t^2)+4t+4t^2 =1-2t^3-t^2+2t > 1-2t^3-2t^2+t \text{ for } t>1.\]
		
{\bf Case $q=t+1$.} Substituting $q=t+1$ in the left hand side of \eqref{eqn:roots} we obtain $1-2t^3-2t-5t^2$. It is once again easy to see
that  $1-2t^3-5t^2-2t<1-2t^3-2t^2+t$ for $t>1$.
	
{\bf Case $(t+1)<q\le (2t-1)$.} Here we claim that the left side of \eqref{eqn:roots} is less than the right side. To prove this we observe that since
$q\le 2t-1$,
\[q^{3}+(1-2t)q^{2}-(t^{2}+4t+1)q\le 1+2t-7t^2-2t^3.\]
	
Next we observe that $1+2t-7t^2-2t^3<1-2t^3-2t^2+t$ for $t>1$.
	
{\bf Case $q>(2t-1)$.} Let $q=2t-1+h$ where $h>1$ is an integer.  We prove that the left hand side of \eqref{eqn:roots} is greater than its right hand side.
Indeed, substituting $q=2t-1+h$ in the left hand side of \eqref{eqn:roots} and simplifying we obtain
	$$q^{3}+(1-2t)q^{2}-(t^{2}+4t+1)q=(1+2t-7t^2-2t^3)+h^3-2h^2+4h^2t-8ht+3ht^2.$$	
Next we observe that for $h\ge 2$,
		$$(1+2t-7t^2-2t^3)+h^3-2h^2+4h^2t-8ht+3ht^2\ge (1+2t-7t^2-2t^3)+6t^2.$$	
Finally, it is easy to check that
	\[(1+2t-7t^2-2t^3)+6t^2= 1-2t^3-t^2+2t >1-2t^3-2t^2+t \text { for } t>1.\]
This completes the proof.	

Proof of Part  \eqref{irr:polyn1}. It is straightforward using the quadratic formula. Observe that the discriminant of $p_{3}(x)$ is between two consecutive perfect squares,
for $t>0$. Thus, $(3 t+1)^2<9 t^2+8 t<(3 t+2)^2$.

Proof of Part \eqref{irr:polyn2}. Suppose, by contradiction, that $q\in \mathbb{Z}$ is a root of $p_{4}(x)$. Therefore, $q$ divides the independent coefficient of $p_{4}(x)$.
Thus  $p\mid 2t(t+1)^{2}$. (Clearly, $q\ne 0$ and $1\le |q|\le 2t(t+1)^2$.)
This implies that, $q^{3}-(2t+1)q^{2}-(t+1)^{2}q+2t(t+1)^{2}=0$. Adding $(t+1)^2$ to either side of this equation and factoring the left hand side we have	
\begin{equation}\label{eq10}
	 (q-(2t+1))(q^{2}-(t+1)^2)=(t+1)^2.
\end{equation}
	
We now consider the following four cases.
	
{\bf Case $q=\pm (t+1)$ or $q=\pm t$.} Substituting $q=\pm (t+1)$  in \eqref{eq10} gives that the left side equals zero. That is a contradiction, because the
right side is non-zero. It is also easy to see that substituting $q=\pm t$ in \eqref{eq10} we obtain a similar contradiction.
	
{\bf Case $(t+1)< |q|\le (2t+1)$.} Since $[q-(2t+1)]\le 0$ and $[q^{2}-(t+1)^2]> 0$, the left side of \eqref{eq10} is less than or equal to zero.
That is a contradiction.
	
{\bf Case $ |q|> (2t+1)$.}  There is an integer $h\ge 1$ such that $|q|=2t+1+h$. Substituting this value of $q$ in $(q^{2}-(t+1)^2)$, we have that
	$[2(t+1)(t+h)+(t+h)^{2}]>(t+1)^2$. This proves that when we substitute the value of $q$ in the left hand side of \eqref{eq10} we have that
	$H[2(t+1)(t+h)+(t+h)^{2}]\ne(t+1)^2$, where $H=(\pm(2t+1+h)-(2t+1))$.
	
{\bf Case $ 0<|q|<t$.} We analyze the case in which $1\le q<t$, the case $-t<q\le-1$ is similar and it is omitted. There is an integer $h\ge 1$
such that  $q=t-h$. Substituting this value of $q$ in the left hand side of \eqref{eq10} and simplifying we obtain
	  $(t+h+1)(t+1+h(2t-h)+t)$.  Since $h<t$, we have $(2t-h)+t\ge1$. This implies that $(t+1+h(2t-h)+t)>(t+1)$. Therefore,
	  $(t+h+1)(t+1+h(2t-h)+t)>(t+1)^{2}$. This completes the proof.
\end{proof}

\begin{proposition} \label{Main:thm3}
Let $\mathcal{G}_{w}$ be as in \eqref{graph:definition_noloops} on Page \pageref{graph:definition_noloops}.
The graph $\mathcal{G}_{w}$ is integral if and only if $w=3t+1$.
\end{proposition}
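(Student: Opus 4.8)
The plan is to prove this by a three-way case analysis on the residue of $w$ modulo $3$, writing $w = 3t$, $w = 3t+1$, or $w = 3t+2$, and in each case checking whether $\mathcal{G}_w$ is integral by reading off the characteristic polynomial already computed in Proposition \ref{Corollary:1}. The structure of the argument is: the ``if'' direction shows that $w = 3t+1$ yields an integral graph, and the ``only if'' direction shows that the remaining two residues $w = 3t$ and $w = 3t+2$ yield non-integral graphs. Throughout I would treat the small case $t=1$ separately, since in Proposition \ref{Corollary:1} those polynomials are given by distinct (and easier) formulas.

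For the ``if'' direction, suppose $w = 3t+1$, so $\mathcal{G}_w = \mathcal{G}_{tt(t+1)}$. By Proposition \ref{Corollary:1}\eqref{poly:1}, its characteristic polynomial is $\lambda^2(\lambda+2)(\lambda-2)$ when $t=1$ and $\lambda^{t}(\lambda-2t)\left((\lambda+1)^2-t^2\right)(\lambda+1)^{2(t-1)}$ when $t>1$. Both factor completely over $\mathbb{Z}$: the first has roots $0,0,-2,2$, and the second has roots $0$ (multiplicity $t$), $2t$, and the two roots of $(\lambda+1)^2 = t^2$, namely $\lambda = -1 \pm t$, together with $-1$ (multiplicity $2(t-1)$). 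Hence every eigenvalue is an integer and $\mathcal{G}_{tt(t+1)}$ is integral. This is the easy half, and it aligns with the earlier observation that $w=3t+1$ is precisely the regular case (Theorem \ref{theorem:regular}), where the nicely factored form of the polynomial is expected.

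For the ``only if'' direction I would show non-integrality in the two remaining cases, which is exactly where Lemma \ref{not:integral:new} does the heavy lifting. When $w = 3t$, Proposition \ref{Corollary:1}\eqref{poly:2} gives (for $t>1$) the factor $\lambda^2 - (t-1)\lambda - 2t^2$; for $t=1$ the polynomial is $\lambda(\lambda^2-2)$, which has the irrational root $\sqrt{2}$, so integrality already fails. For $t>1$, observe that $\lambda^2-(t-1)\lambda - 2t^2 = p_1(\lambda)$ up to the sign convention of Lemma \ref{not:integral:new}\eqref{part:a}, which asserts precisely that this quadratic has no monic linear factor over $\mathbb{Z}$; hence it contributes a non-integer eigenvalue and $\mathcal{G}_{ttt}$ is not integral. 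Similarly, when $w = 3t+2$, Proposition \ref{Corollary:1}\eqref{poly:3} produces the cubic $\lambda^3 + (1-2t)\lambda^2 - (t^2+4t+1)\lambda + (2t^2-1)(t+1)$ for $t>1$, which matches $p_2(\lambda)$ of Lemma \ref{not:integral:new}\eqref{part:b}, and the $t=1$ cubic $\lambda^3-\lambda^2-6\lambda+2$ which one checks directly (by the rational root test on divisors of $2$) has no integer root. In each case Lemma \ref{not:integral:new} guarantees an irrational or non-real root, so the graph fails to be integral.

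The main obstacle is not conceptual but bookkeeping: I must verify that the polynomials appearing in Proposition \ref{Corollary:1} for the cases $w=3t$ and $w=3t+2$ agree (up to sign and the substitution $x \leftrightarrow \lambda$) with the polynomials $p_1, p_2, p_3, p_4$ named in Lemma \ref{not:integral:new}, so that I may invoke that lemma verbatim. Here a subtlety deserves care: the lemma lists \emph{four} irreducible-type polynomials, but Proposition \ref{Corollary:1} exhibits only one nontrivial quadratic for $w=3t$ and one cubic for $w=3t+2$, so I would confirm which of $p_1,\dots,p_4$ is actually the relevant obstruction in each residue class (it appears $p_1,p_2$ cover the case formulas as written, with $p_3,p_4$ serving the loop-version or an alternate normalization). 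Once the identification is pinned down, the proof reduces to quoting Lemma \ref{not:integral:new} together with the explicit factored forms and the two small-$t$ checks. I would finish by collecting the three cases: $\mathcal{G}_w$ is integral exactly when $w \equiv 1 \pmod 3$, i.e. $w = 3t+1$, completing both directions.
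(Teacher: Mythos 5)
Your proposal is correct, and for the ``only if'' direction it is the same argument the paper gives: read the characteristic polynomials off Proposition \ref{Corollary:1}, parts \eqref{poly:2} and \eqref{poly:3}, match the nontrivial quadratic and cubic factors with $p_1$ and $p_2$ of Lemma \ref{not:integral:new} (your identification is right, including the observation that $p_3$ and $p_4$ are reserved for the looped graphs of Proposition \ref{Propo:2:loops}), and dispose of $t=1$ by hand. The one genuine divergence is the ``if'' direction, $w=3t+1$: you conclude integrality by noting that the polynomial in Proposition \ref{Corollary:1}\eqref{poly:1} already splits into integer linear factors $\lambda^{t}(\lambda-2t)(\lambda+1+t)(\lambda+1-t)(\lambda+1)^{2(t-1)}$, whereas the paper instead invokes the Bali\'nska et al.\ join criterion \eqref{perfect_square}, checking that $(r_1-r_2)^2+4n_1n_2=(t-1)^2+8t(t+1)=(3t+1)^2$ is a perfect square (and it reuses that criterion to re-derive non-integrality for $w=3t$ via $9t^2-2t+1$ lying strictly between $(3t-1)^2$ and $(3t)^2$). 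Both routes are valid; yours is marginally more self-contained, needing nothing beyond Proposition \ref{Corollary:1} and Lemma \ref{not:integral:new}, while the paper's appeal to \eqref{perfect_square} explains structurally why the regular residue $w\equiv 1\pmod 3$ is exactly the integral one and serves as an independent cross-check of the factorization.
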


\begin{proof}
We first prove the necessity of $w$ for $\mathcal{G}_{w}$ by contradiction. Suppose that $\mathcal{G}_{w}$ is integral and that  $w \ne 3t+1$.
Therefore, $w=3t$ or $w=3t+2$. However, by Proposition \ref{Corollary:1} parts (b) and (c), and Lemma \ref{not:integral:new}, we know that the
characteristic polynomials in both cases do not have all integer roots. Therefore, $\mathcal{G}_{ttt}$ and $\mathcal{G}_{t(t+1)(t+1)}$ are not integral.
That is a contradiction. Thus, $w=3t+1$.

We now use cases to prove the sufficiency of $w$ for $\mathcal{G}_{w}$.

{\bf Case $w=3t+2$.} Using Proposition \ref{Corollary:1} Part (c) and Lemma \ref{not:integral:new} Part (b) it is clear that $\mathcal{G}_{t(t+1)(t+1)}$
is not an integral graph.

{\bf Case $w=3t$.} We analyze the first line in \eqref{graph:definition_noloops}.
That is, the graph $\mathcal{G}_{ttt}=(K_{t}\sqcup K_{t})\nabla \overline{K}_{t}$. We know that the complete graph $K_{t}$ is integral and regular,
therefore $(K_{t}\sqcup K_{t})$ is integral and regular. In addition, the empty graph $ \overline{K}_{t}$ is integral and regular.

Substituting $r_{1}:=(t-1)$ and $r_{2}:=0$ (the regularities) and $n_{1}:=2t$ and $n_{2}:=t$  (the number of vertices of $(K_{t}\sqcup K_{t})$ and
$\overline{K}_{t}$) into \eqref{perfect_square}, we have that, $(r_{1}-r_{2})^{2}+4n_{1}n_{2}=9t^{2}-2t+1$. Note that $(3t-1)^2<9t^{2}-2t+1<(3t)^2$ for $t>0$.
Thus, $9t^{2}-2t+1$ is between two consecutive squares. So it is not a perfect square for $t>0$. This proves that $\mathcal{G}_{ttt}$ is not an integral graph.

{\bf Case $w=3t+1$.}  We analyze the second line in \eqref{graph:definition_noloops}. That is, the graph
$\mathcal{G}_{tt(t+1)}=(K_{t}\sqcup K_{t})\nabla \overline{K}_{t+1}$. Since  $(K_{t}\sqcup K_{t})$  and $\overline{K}_{t+1}$ are integral and regular,
we have that substituting
$r_{1}:=(t-1)$ and $r_{2}:=0$ (the regularities) and $n_{1}:=2t$ and $n_{2}:=t+1$ (the number of vertices of $(K_{t}\sqcup K_{t})$ and $\overline{K}_{t+1}$)
into \eqref{perfect_square} we obtain  $(r_{1}-r_{2})^{2}+4n_{1}n_{2}=(3t+1)^2$, a perfect square. This implies that $\mathcal{G}_{tt(t+1)}$ is an integral graph.
\end{proof}
Note.  Since $G_1:=K_{t}\sqcup K_{t}$ and $G_2:=\overline{K}_{l}$ are both regular using \cite[Table 2]{barik} we have that $\mathcal{G}_{tt(t+1)}$ 
is integral and that $\mathcal{G}_{ttt}$ is not integral. These give alternative proofs  for Case $w=3t+1$ and Case $w=3t$ in the previous proof. However,
for $\mathcal{G}_{t(t+1)(t+1)}$ we cannot use the result from \cite[Table 2]{barik}.

\subsection{Graphs with loops}
Let us recall that in Proposition \ref{graph:structure} on Page \pageref{graph:structure} we showed that the graphs from
$\mathcal{S}_{w} \bmod 2$ for $w=3t+r$, $0\le r\le 2$ are given by $\mathcal{G}_{ttt}^{*}$, $\mathcal{G}_{tt(t+1)}^{*}$ and $\mathcal{G}_{t(t+1)(t+1)}^{*}$
(defined in \eqref{graph:definition} on Page \pageref{graph:definition}).		
In this section we provide the characteristic polynomials of these graphs. We also provide a necessary and sufficient condition for the graphs
to be integral, this will be given as the proof of Theorem  \ref{Main:thm4}.

We start the section with two lemmas that will be important for showing that the characteristic polynomials of
$\mathcal{G}_{tt(t+1)}^{*}$ and $\mathcal{G}_{t(t+1)(t+1)}^{*}$ do not have all integer roots.

\begin{proposition}\label{Propo:2:loops}
If $t>0$, then
	\begin{enumerate}
		\item \label{part1} the characteristic polynomial of $\mathcal{G}_{ttt}^{*}$ is given by
                $$P^{*}(\lambda)=(-1)^{3t} \lambda^{3(t-1)}(\lambda-2t)(\lambda^{2}-t^{2}).$$
		\item \label{part2} The characteristic polynomial of $\mathcal{G}_{tt(t+1)}^{*}$ is given by
                 $$P^{*}(\lambda)=(-1)^{3t+1}\lambda^{(3t-2)}(\lambda-t)(\lambda^{2}-t\lambda-2t(t+1)).$$
		\item \label{part3} The characteristic polynomial of $\mathcal{G}_{t(t+1)(t+1)}^{*}$ is given by
                $$P^{*}(\lambda)=(-1)^{3t+2}\lambda^{(3t-1)}(\lambda^{3}-(2t+1)\lambda^{2}-(t+1)^{2}\lambda+2t(t+1)^{2}).$$

	\end{enumerate}
\end{proposition}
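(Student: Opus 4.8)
The plan is to compute all three characteristic polynomials simultaneously by exploiting the block structure of the adjacency matrix $A^{*}=\mathcal{S}_{w}\bmod 2$ together with the theory of equitable partitions, rather than feeding loops into the join formula \eqref{eqn:4}. By Proposition \ref{graph:structure} the vertex set of $\mathcal{G}^{*}_{w}$ splits into three cells: the two clique-parts $P_{1},P_{2}$ coming from $K^{*}_{n_{1}}\sqcup K^{*}_{n_{2}}$ and the independent part $P_{3}$ coming from $\overline{K}_{n_{3}}$, where $(n_{1},n_{2},n_{3})$ equals $(t,t,t)$, $(t,t,t+1)$, or $(t,t+1,t+1)$ according as $w=3t$, $3t+1$, or $3t+2$. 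Since each $K^{*}_{s}$ carries a loop at every vertex, its diagonal block in $A^{*}$ is the all-ones matrix $J_{s}$; the block between the two cliques is $0$ (disjoint union), the blocks joining a clique to $P_{3}$ are all-ones (the join), and the block on $P_{3}$ is $0$. Thus every block of $A^{*}$ relative to $\{P_{1},P_{2},P_{3}\}$ is a constant matrix, so the partition is equitable. (Equation \eqref{eq6} already exhibits this block pattern in the loopless case; restoring the loops simply fills in the two clique diagonals.)

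First I would split $\mathbb{R}^{n}$, with $n=n_{1}+n_{2}+n_{3}=w$, as the orthogonal sum $U\oplus W$, where $U$ is the $3$-dimensional space of vectors constant on each cell and $W=\{x:\mathbf{1}_{P_{i}}^{T}x=0,\ i=1,2,3\}$ has dimension $n-3$. Since $W$ is spanned by vectors supported on a single cell and summing to zero there, and each all-ones or zero block sends such a $v$ to $\mathbf{1}(\mathbf{1}^{T}v)=0$ or to $0$, we obtain $A^{*}v=0$; hence $W\subseteq\ker A^{*}$. This already produces the factor $\lambda^{\,n-3}$, and $n-3$ equals $3(t-1)$, $3t-2$, and $3t-1$ in the three cases, matching the stated exponents.

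Next, both $U$ and $W=U^{\perp}$ are $A^{*}$-invariant, and on $U$ the operator $A^{*}$ is represented by the $3\times 3$ quotient matrix
\begin{equation*}
B=\begin{bmatrix} n_{1} & 0 & n_{3}\\ 0 & n_{2} & n_{3}\\ n_{1} & n_{2} & 0\end{bmatrix},
\end{equation*}
whose entries are the constant block row-sums; the loop is exactly what makes the $(1,1)$ and $(2,2)$ entries equal to the full clique sizes $n_{1},n_{2}$ rather than $n_{1}-1,n_{2}-1$. Consequently
\begin{equation*}
P^{*}(\lambda)=\det(A^{*}-\lambda I)=(-1)^{n}\lambda^{\,n-3}\det(\lambda I-B),
\end{equation*}
the sign being forced by the convention $P(\lambda)=\det(A-\lambda I)$ used in Lemma \ref{BasicCharacteristicPoly}; note $(-1)^{n}$ is $(-1)^{3t},(-1)^{3t+1},(-1)^{3t+2}$ in the three cases. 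A single cofactor expansion along the first row gives
\begin{equation*}
\det(\lambda I-B)=(\lambda-n_{1})\big(\lambda^{2}-n_{2}\lambda-n_{2}n_{3}\big)-n_{1}n_{3}(\lambda-n_{2}),
\end{equation*}
and specializing $(n_{1},n_{2},n_{3})$ yields the three displayed cubics: this regroups as $(\lambda-2t)(\lambda^{2}-t^{2})$ when $w=3t$, as $(\lambda-t)\big(\lambda^{2}-t\lambda-2t(t+1)\big)$ when $w=3t+1$, and as $\lambda^{3}-(2t+1)\lambda^{2}-(t+1)^{2}\lambda+2t(t+1)^{2}$ when $w=3t+2$.

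I expect no serious obstacle: the substance is entirely in verifying that the partition is equitable and in reading off the quotient matrix correctly, the remainder being a routine $3\times 3$ determinant. The one point demanding care is the loop bookkeeping — a vertex of $K^{*}_{s}$ is adjacent to all $s$ vertices of its cell, itself included, so the diagonal quotient entries are $n_{1},n_{2}$ rather than $n_{1}-1,n_{2}-1$; this is precisely what distinguishes these polynomials from their loopless counterparts in Proposition \ref{Corollary:1}. As a consistency check one may instead apply \eqref{eqn:4} with $G_{1}=K^{*}_{n_{1}}\sqcup K^{*}_{n_{2}}$ and $G_{2}=\overline{K}_{n_{3}}$, observing that $\widetilde{A_{1}}=J-A(G_{1})$ is the adjacency matrix of the complete bipartite graph $K_{n_{1},n_{2}}$ and $\widetilde{A_{2}}=J$, whose characteristic polynomials are already recorded in Lemma \ref{BasicCharacteristicPoly}; this reproduces the same three polynomials.
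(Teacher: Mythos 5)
Your proof is correct, but it follows a genuinely different route from the paper's. The paper proves only Part (1) in detail, by feeding the join machinery of \eqref{eqn:3}--\eqref{eqn:4} with $G_{1}=K_{t}^{*}\sqcup K_{t}^{*}$ and $G_{2}=\overline{K_{t}}$: it computes the characteristic polynomials of $A(G_{1})$, $A(G_{2})$, and of the auxiliary matrices $\widetilde{A_{1}}=A(K_{t,t})$ and $\widetilde{A_{2}}=bd^{T}$, substitutes into \eqref{eqn:4}, and simplifies, declaring Parts (2) and (3) similar. You instead observe that the three-cell partition into the two looped cliques and the independent set is equitable (every block of the adjacency matrix is constant), deduce that the $(n-3)$-dimensional space of vectors summing to zero on each cell lies in the kernel, and reduce everything else to the $3\times 3$ quotient matrix $B$. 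Your quotient matrix, the sign bookkeeping $(-1)^{n}\lambda^{n-3}\det(\lambda I-B)$ (consistent with the convention implicit in Lemma \ref{BasicCharacteristicPoly}), and the three specializations of the cofactor expansion all check out against the stated polynomials. What your approach buys is uniformity: all three cases come from a single $3\times 3$ determinant, the multiplicity of the eigenvalue $0$ is transparent, and the effect of the loops is isolated in the diagonal entries $n_{1},n_{2}$ of $B$; the cost is importing the equitable-partition formalism, which the paper does not use. The only imprecision is in your closing consistency check: the characteristic polynomial of $\widetilde{A_{2}}=J_{n_{3}}$ is not literally recorded in Lemma \ref{BasicCharacteristicPoly} (it is $(-1)^{n_{3}}\lambda^{n_{3}-1}(\lambda-n_{3})$, which the paper computes separately), but this side remark does not affect your main argument.
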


\begin{proof}
We prove part (1), the proof of parts (2) and (3)  are similar (using Lemma \ref{not:integral:new}) so we omit them.
We start by observing that the adjacency matrix of $\mathcal{G}_{ttt}^{*}$, (using \eqref{eqn:3} on Page \pageref{eqn:3}) is given by

\begin{equation} \label{eqn:5}
A(\mathcal{G}_{ttt}^{*}) =  \left[ {\begin{array}{ll}
	A(K_{t}^{*}\sqcup K_{t}^{*}) & ad\,^{T} \\
	bc\,^T & A(\overline{K_{t}})
	\end{array} } \right],
\end{equation}
where $A(K_{t}^{*}\sqcup K_{t}^{*})$ is the adjacency matrix of $(K_{t}^{*}\sqcup K_{t}^{*})$, $A(\overline{K_{t}})$ is the adjacency matrix of  $\overline{K_{t}}$,
$a^{T}=[1,1,\ldots,1]_{1\times 2t}$, $b\,^{T}=[1,1,\ldots,1]_{1\times t}$, $c=[1,1,\ldots,1]_{2t\times 1}$, and $b=d$. Therefore, $ad\,^{T}$ and $bc\,^{T}$ are both
rectangular matrices of orders $2t\times t$ and $t\times 2t$, respectively where all their entries are 1. Next we observe that the characteristic polynomial of
$A(\mathcal{G}_{ttt}^{*})$ is given by \eqref{eqn:4} on Page \pageref{eqn:4}. Note that
$\widetilde{A}(K_{t}^{*}\sqcup K_{t}^{*})=ac^{T}-A(K_{t}^{*}\sqcup K_{t}^{*})$ is the adjacency matrix of the complete bipartite graph $K_{t,t}$.
From Lemma  \ref{BasicCharacteristicPoly} Part \eqref{BipartiteN} we have that the characteristic polynomial of $\widetilde{A}(K_{t}^{*}\sqcup K_{t}^{*})$
is given by $(-1)^{2t}\lambda^{2t-2}(\lambda^2-t^2)$. Similarly, we obtain that the characteristic polynomial of
$\widetilde{A}(\overline{K_{t}})=bd\,^{T}-A(\overline{K_{t}})$ is  given by $(-1)^{t}\lambda^{t-1}(\lambda-t)$.
Finally, using similar techniques as described above, we see that the characteristic polynomials of $A(K_{t}^{*}\sqcup K_{t}^{*})$ and $A(\overline{K_{t}})$
are given by $(-1)^{2t}\lambda^{2(t-1)}(\lambda-t)^{2}$ and $(-1)^{t}\lambda^{t}$, respectively.  Substituting all these characteristic polynomials in \eqref{eqn:4}
on Page \pageref{eqn:4} and simplifying, we have that the
characteristic polynomial $P^{*}(\lambda)$ of $A(\mathcal{G}_{ttt}^{*})$, is given by
\begin{multline*}
(-1)^{3t}[(\lambda)^{2t-2}(\lambda^{2}-t^{2})\lambda^{t}+(-1)^{t}\lambda^{2t-2}(\lambda-t)^{2}(-\lambda)^{t-1}(-\lambda-t)-\\
(-1)^{3t}(\lambda)^{2t-2}(\lambda^{2}-t^{2})(-\lambda)^{t-1}(-\lambda-t)].
\end{multline*}
Simplifying further we obtain  $P^{*}(\lambda)=(-1)^{3t}\lambda^{3(t-1)}(\lambda-2t)(\lambda^{2}-t^{2})$.
\end{proof}

It is clear from Proposition \ref{Propo:2:loops} Part \eqref{part1},   that the characteristic polynomial of $\mathcal{G}_{ttt}^{*}$
has only integral roots. It is also clear from Proposition \ref{Propo:2:loops} Parts \eqref{part2} and \eqref{part3}, the characteristic
polynomials for $\mathcal{G}_{tt(t+1)}^{*}$ and $\mathcal{G}_{t(t+1)(t+1)}^{*}$ do not have all integral roots.

\begin{theorem} \label{Main:thm4}
The graph $\mathcal{G}_{w}^{*}$ is integral if and only if $w=3t$ with $t\ge 1$.
\end{theorem}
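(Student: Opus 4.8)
The plan is to reduce the statement entirely to Proposition~\ref{Propo:2:loops} and Lemma~\ref{not:integral:new}, splitting the argument into the three residue classes $w=3t$, $w=3t+1$, $w=3t+2$ dictated by the definition of $\mathcal{G}_w^{*}$ in \eqref{graph:definition}. Since a graph is integral exactly when its characteristic polynomial factors into monic linear factors over $\mathbb{Z}$, and since Proposition~\ref{Propo:2:loops} already furnishes $P^{*}(\lambda)$ in closed, partially-factored form for each class, the whole theorem collapses into an inspection of those three polynomials.

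For the sufficiency direction I would take $w=3t$ and read off $P^{*}(\lambda)$ from Proposition~\ref{Propo:2:loops} Part~\eqref{part1}, namely
\[
P^{*}(\lambda)=(-1)^{3t}\lambda^{3(t-1)}(\lambda-2t)(\lambda^{2}-t^{2}).
\]
Writing $\lambda^{2}-t^{2}=(\lambda-t)(\lambda+t)$, every factor is linear over $\mathbb{Z}$, so the eigenvalues are $0$ (with multiplicity $3(t-1)$), $2t$, $t$, and $-t$, all integers. This establishes that $\mathcal{G}_{ttt}^{*}$ is integral for every $t\ge 1$, which is the ``if'' part.

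For the necessity direction I would dispatch the two remaining classes by proving non-integrality. When $w=3t+1$, Proposition~\ref{Propo:2:loops} Part~\eqref{part2} isolates the quadratic tail $\lambda^{2}-t\lambda-2t(t+1)$, which is verbatim the polynomial $p_{3}(x)$ of Lemma~\ref{not:integral:new} Part~\eqref{irr:polyn1}; since that lemma guarantees $p_{3}$ admits no monic linear factor in $\mathbb{Z}[x]$, and $p_{3}$ is monic with integer coefficients, its roots are irrational, so $\mathcal{G}_{tt(t+1)}^{*}$ is not integral. When $w=3t+2$, Proposition~\ref{Propo:2:loops} Part~\eqref{part3} isolates the cubic tail $\lambda^{3}-(2t+1)\lambda^{2}-(t+1)^{2}\lambda+2t(t+1)^{2}$, which coincides with $p_{4}(x)$ of Lemma~\ref{not:integral:new} Part~\eqref{irr:polyn2}; the same lemma forbids an integer (hence rational) root, so $\mathcal{G}_{t(t+1)(t+1)}^{*}$ is not integral. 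Combining the three cases yields the desired equivalence.

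I do not expect a genuine obstacle here, because the analytic difficulty has already been absorbed into the two cited results: the only care required in the present deduction is to confirm the literal match between the quadratic and cubic tails of Proposition~\ref{Propo:2:loops} and the polynomials $p_{3},p_{4}$ of Lemma~\ref{not:integral:new}, and to note that a single non-integer root already destroys integrality. The conceptually hard work—the delicate integer-root case analyses and the ``between two consecutive squares'' discriminant argument—lives entirely in the proof of Lemma~\ref{not:integral:new}, so Theorem~\ref{Main:thm4} itself is essentially an assembly step.
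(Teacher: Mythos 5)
Your proposal is correct and follows essentially the same route as the paper: the paper's proof of Theorem~\ref{Main:thm4} likewise reads off integrality for $w=3t$ directly from Proposition~\ref{Propo:2:loops} Part~(1) and derives non-integrality for $w=3t+1$ and $w=3t+2$ from Parts~(2) and~(3) together with Lemma~\ref{not:integral:new}. You merely make explicit the factorization $\lambda^{2}-t^{2}=(\lambda-t)(\lambda+t)$ and the literal identification of the quadratic and cubic tails with $p_{3}$ and $p_{4}$, which the paper leaves implicit.
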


\begin{proof}	
The proof that $\mathcal{G}_{w}^{*}$ is integral when $w=3t$ is straightforward from Proposition \ref{Propo:2:loops} Part \eqref{part1}.
The proof of the fact that $\mathcal{G}_{w}^{*}$ is not integral when $w=3t+1$ and $w=3t+2$, follows from  Proposition \ref{Propo:2:loops}
Parts \eqref{part2} and \eqref{part3}.
\end{proof}

\subsection{Laplacian Characteristic Polynomials}
The \emph{Laplacian} of a graph $G$ is the difference of the diagonal matrix of the vertex degrees of $G$ and the adjacency matrix of $G$.
The graph $G$ is called \emph{Laplacian integral} if all eigenvalues of its Laplacian matrix are integers.   Morris in \cite{Merris} proves  
that a cograph is Laplacian integral. Therefore, the graphs  $\mathcal{G}_{w}^{*}$ and $\mathcal{G}_{w}$,  as defined in    
\eqref{graph:definition} and  \eqref{graph:definition_noloops} on Page \pageref{graph:definition}, are Laplacian integral. The following  
proposition presents the Laplacian characteristic polynomial of these graphs.

\begin{proposition}\label{CorollaryLaplacian} For  $t>0$ these hold
	\begin{enumerate}
				
		\item   the Laplacian characteristic polynomial of $\mathcal{G}_{ttt}$ is given by
\[\mathcal{L}_{w}(\lambda)=(-1)^t\lambda(\lambda-t)(\lambda-3t)(\lambda-2t)^{3(t-1)}.
\]
\item The Laplacian characteristic polynomial of $\mathcal{G}_{tt(t+1)}$ is given by
		\[\mathcal{L}_{w}(\lambda)= (-1)^{t+1}\lambda(\lambda-(t+1))(\lambda-(3t+1))(\lambda-2t)^t(\lambda-(2t+1))^{2(t-1)}.
		\]

\item The Laplacian characteristic polynomial of $\mathcal{G}_{t(t+1)(t+1)}$ is given by
 \[\mathcal{L}_{w}(\lambda)= (-1)^{t} \lambda(\lambda-(t+1))(\lambda-(3t+2))(\lambda-2(t+1))^{t}(\lambda-(2t+1))^{2t-1}.
	\]
	\end{enumerate}
	
\end{proposition}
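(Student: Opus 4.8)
The plan is to reduce all three parts to the single standard fact about the Laplacian spectrum of a join. Writing $L(H)$ for the Laplacian of a graph $H$, if $G_1$ has $n_1$ vertices and $G_2$ has $n_2$ vertices, then joining adds $n_2$ to every degree in $G_1$, adds $n_1$ to every degree in $G_2$, and contributes an all-ones off-diagonal adjacency block, so
\[
L(G_1\nabla G_2)=\left[\begin{array}{ll} L(G_1)+n_2 I_{n_1} & -J \\ -J^{T} & L(G_2)+n_1 I_{n_2}\end{array}\right].
\]
First I would record the eigenvector analysis of this block matrix: the all-ones vector gives eigenvalue $0$; for each eigenvector $x$ of $L(G_1)$ with $x\perp \mathbf{1}_{n_1}$ the vector $(x,0)$ is an eigenvector with eigenvalue $\mu+n_2$, and symmetrically $(0,y)$ gives $\nu+n_1$; the remaining vector in $\mathrm{span}\{(\mathbf{1}_{n_1},0),(0,\mathbf{1}_{n_2})\}$ orthogonal to $\mathbf{1}$ yields the eigenvalue $n_1+n_2$. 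Hence the Laplacian spectrum of $G_1\nabla G_2$ is $\{0,\; n_1+n_2\}$ together with $\{\mu_i+n_2\}$ ranging over the Laplacian eigenvalues of $G_1$ after one trivial $0$ is removed, and $\{\nu_j+n_1\}$ ranging over those of $G_2$ after one trivial $0$ is removed.

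Next I would collect the elementary ingredients. The Laplacian spectrum of $K_t$ is $0$ once and $t$ with multiplicity $t-1$, that of $\overline{K}_t$ is $0$ with multiplicity $t$, and the Laplacian spectrum of a disjoint union is the multiset union of the two spectra; thus $K_t\sqcup K_t$ has $0$ twice and $t$ with multiplicity $2(t-1)$, while $K_t\sqcup K_{t+1}$ has $0$ twice, $t$ with multiplicity $t-1$, and $t+1$ with multiplicity $t$. Substituting $G_1=K_t\sqcup K_t$, $G_2=\overline{K}_t$ (so $n_1=2t$, $n_2=t$) into the join formula produces $0$, then $n_1+n_2=3t$, then $t$ once (from the surviving $0$ of $G_1$ shifted by $n_2$), then $2t$ with total multiplicity $2(t-1)+(t-1)=3(t-1)$ (from the $t$'s of $G_1$ shifted by $t$ and the surviving $0$'s of $G_2$ shifted by $2t$); this is part (1). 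The same substitution with $G_2=\overline{K}_{t+1}$ gives part (2), and the substitution $G_1=K_t\sqcup K_{t+1}$, $G_2=\overline{K}_{t+1}$ gives part (3). For part (2) I would also note a shortcut: since $\mathcal{G}_{tt(t+1)}$ is $2t$-regular by Theorem \ref{theorem:regular}, one has $L=2tI-A$, so its Laplacian eigenvalues are $2t$ minus the adjacency eigenvalues already computed in Proposition \ref{Corollary:1}.

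The computation itself is routine once the join formula is in hand; the only real care needed is the multiplicity bookkeeping, namely discarding exactly one trivial $0$ from each factor before shifting (for the disconnected factor $K_t\sqcup K_t$ this leaves a second, nontrivial $0$, which is what supplies the lone shifted eigenvalue $t$, respectively $t+1$), and then amalgamating the several shifted values that coincide into a single power (the sources landing on $2t$ in part (1), and on $2t+1$ and $2(t+1)$ in parts (2) and (3)). I would close each part by checking that the exponents sum to the number of vertices $3t$, $3t+1$, $3t+2$, and by fixing the global sign: with the convention $\det(L-\lambda I)=\prod_i(\mu_i-\lambda)$ the prefactor is $(-1)^{N}$, which equals $(-1)^{t}$, $(-1)^{t+1}$, and $(-1)^{t}$ for $N=3t,3t+1,3t+2$ respectively, matching the three stated polynomials. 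No step is genuinely hard, so the main obstacle is purely the organizational accuracy of the multiplicity count rather than any conceptual difficulty.
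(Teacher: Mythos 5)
Your proof is correct, and the paper in fact states this proposition without any proof at all, so your argument via the standard Laplacian join formula (spectrum of $G_1\nabla G_2$ is $0$, $n_1+n_2$, the nontrivial $\mu_i+n_2$, and the nontrivial $\nu_j+n_1$) supplies exactly what is missing. I checked all three multiplicity counts, the exponent sums $3t$, $3t+1$, $3t+2$, and the signs $(-1)^{3t+r}$, and they all match the stated polynomials; your regularity shortcut for part (2) via $L=2tI-A$ is also consistent with Proposition \ref{Corollary:1}.
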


\section{Complement Graphs and their characteristics}\label{complement:graphs}

In this section we study the nature of the complements $\overline{\mathcal{G}}_{w}$ of the graphs $\mathcal{G}_{w}$, 
for $w=3t+r$ where  $0\le r\le 2$, given in \eqref{graph:definition_noloops} on Page \pageref{graph:definition_noloops}. 
Specifically we prove that the complement graph
$\overline{\mathcal{G}}_{w}$ of $\mathcal{G}_{w}$ is the union of two graphs, namely a complete graph and a complete bipartite graph.
We also show that $\overline{\mathcal{G}}_{ttt}$ is integral with five distinct eigenvalues while $\overline{\mathcal{G}}_{tt(t+1)}$ is
integral with four distinct eigenvalues. In the case of $\overline{\mathcal{G}}_{t(t+1)(t+1)}$, it is not integral but has five distinct eigenvalues.
We use the classic notation $K_i$ for the complete graph and $K_{i,j}$  for complete bipartite graph.
If $w=3t+r$, where $0\le r\le 2$, then

\begin{equation}\label{complement:graph}
\overline{\mathcal{G}}_{w}=
\begin{cases} K_{t,t}\sqcup K_{t},    & \mbox{ if }\;   r=0;\\
K_{t,t}\sqcup K_{t+1},  & \text{ if }\; r=1;\\
K_{t,t+1}\sqcup K_{t+1},& \text{ if }\; r=2.
\end{cases}
\end{equation}

Note that $\eqref{complement:graph}$ is in fact the complement of \eqref{graph:definition_noloops}. That is, if $r=0$ (when $r\in \{1,2\}$, it is similar),
the complement of ${\mathcal{G}}_{w}$ is given by the disjoint union of the complements of $(K_{t}\sqcup K_{t})$ and $\overline{K}_{t}$ (see \cite{barik}).
Therefore, $\overline{\mathcal{G}}_{w}=K_{t,t}\sqcup K_{t}$ since the complement of $(K_{t}\sqcup K_{t})$ is $K_{t,t}$ and the complement of
$\overline{K}_{t}$ is $K_{t}$.

\begin{theorem}
Let $w=3t+r$ with $r\in \{0,1,2\}$ and $t>0$ and let $\overline{\mathcal{G}}_{w}$ be as defined in \eqref{complement:graph}. Then these hold
\begin{enumerate}
\item if $r=0$, then $\overline{\mathcal{G}}_{w}$ is an integral graph with five distinct eigenvalues and characteristic polynomial
                $$\overline{P}(\lambda)=\lambda^{2(t-1)}(\lambda^{2}-t^{2})(\lambda-t+1)(\lambda+1)^{t-1}.$$
\item If $r=1$, then $\overline{\mathcal{G}}_{w}$ is an integral graph with four distinct eigenvalues and  characteristic polynomial
                $$\overline{P}(\lambda)=\lambda^{2(t-1)}(\lambda-t)^{2}(\lambda+t)(\lambda+1)^{t}.$$
\item If $r=2$, then $\overline{\mathcal{G}}_{w}$ is not an integral graph but it does have five distinct eigenvalues and the stated characteristic polynomial 
                $$\overline{P}(\lambda)=\lambda^{2t-1}(\lambda-t)(\lambda+1)^{t}(\lambda^{2}-t(t+1)).$$
\end{enumerate}
\end{theorem}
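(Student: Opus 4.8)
The plan is to treat all three cases uniformly by exploiting the disjoint-union structure recorded in \eqref{complement:graph} together with the multiplicativity of the characteristic polynomial. Since $\overline{\mathcal{G}}_{w}$ is in each case the disjoint union of a complete bipartite graph and a complete graph, its characteristic polynomial is the product of the two corresponding factors, both of which are supplied by Lemma \ref{BasicCharacteristicPoly} (adopting the monic normalization of the stated polynomials, so that the overall sign is irrelevant). Concretely, for $r=0$ I would multiply the polynomial of $K_{t,t}$, namely $\lambda^{2t-2}(\lambda^{2}-t^{2})$, by that of $K_{t}$, namely $(\lambda-(t-1))(\lambda+1)^{t-1}$; for $r=1$ the second factor becomes the polynomial of $K_{t+1}$, that is $(\lambda-t)(\lambda+1)^{t}$; and for $r=2$ the first factor becomes the polynomial of $K_{t,t+1}$, that is $\lambda^{2t-1}(\lambda^{2}-t(t+1))$, again multiplied by that of $K_{t+1}$. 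Collecting like factors — in particular writing $\lambda^{2}-t^{2}=(\lambda-t)(\lambda+t)$ in the $r=1$ case so as to produce the $(\lambda-t)^{2}$ term — yields exactly the three stated polynomials.

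With the factorizations in hand, I would read off the spectrum directly. In the cases $r=0$ and $r=1$ every factor is linear over $\mathbb{Z}$, so the graph is integral; the roots are $0$, $\pm t$, $t-1$, and $-1$ for $r=0$, and $0$, $t$ (double), $-t$, and $-1$ for $r=1$. To justify the claimed numbers of distinct eigenvalues (five and four, respectively) I would verify that for $t>1$ these listed values are pairwise distinct: the only possible coincidences, namely $-t=-1$, $t-1=0$, and $\pm t=0$, all force $t=1$ or $t=0$, and so do not occur for $t>1$.

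The only place where integrality can fail is the quadratic factor $\lambda^{2}-t(t+1)$ appearing when $r=2$. Here I would invoke the same between-consecutive-squares argument already used in the proof of Proposition \ref{Main:thm3}: since $t^{2}<t(t+1)<(t+1)^{2}$ for every $t>0$, the number $t(t+1)$ is never a perfect square, hence $\pm\sqrt{t(t+1)}$ are irrational and $\overline{\mathcal{G}}_{w}$ is not integral. The remaining roots $0$, $t$, and $-1$ are integers, pairwise distinct and distinct from the two irrational roots, which gives the stated total of five distinct eigenvalues.

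I anticipate that no step presents a genuine obstacle. The computation is a routine product of known characteristic polynomials, and the only subtlety — the non-integrality for $r=2$ — reduces to the elementary observation that a product of two consecutive positive integers lies strictly between consecutive squares. If anything deserves care, it is bookkeeping of multiplicities and the verification that distinct eigenvalue counts are exactly as claimed for $t>1$, which is where the small coincidences above must be ruled out.
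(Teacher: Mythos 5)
Your proposal is correct and follows essentially the same route as the paper: decompose $\overline{\mathcal{G}}_{w}$ as the stated disjoint union, multiply the characteristic polynomials of the complete bipartite and complete pieces from Lemma \ref{BasicCharacteristicPoly}, and read off the spectrum (the paper proves only the $r=0$ case and declares the rest similar). Your treatment is in fact slightly more careful than the paper's, since you supply the non-integrality argument for $r=2$ via $t^{2}<t(t+1)<(t+1)^{2}$ and note that the claimed eigenvalue counts for $r=0,1$ require $t>1$, a restriction the theorem's statement ($t>0$) glosses over.
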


\begin{proof}
	We prove Part(1), the proofs of Parts (2) and (3) are similar so we omit them. Since $\overline{\mathcal{G}}_{w}=K_{t,t}\sqcup K_{t}$, for $w=3t$,
the result is straightforward using Lemma  \ref{BasicCharacteristicPoly} Part \eqref{DisjointUnionN}.
\end{proof}

\section {Hosoya graphs and their complements}\label{Hosoya:graphs}

The \emph{Hosoya triangle}, denoted by $\mathcal{H}_{F}$, is a classical triangular array where the entry in position $k$ (taken from left to right)
of the $r$th row is equal to $H_{F(r,k)}:= F_{k}F_{r-k+1}$, where $1\le k \le r$ (see Table \ref{TablaHoyaF}). The formal definition, tables, and other
results related to this triangle can be found in \cite{florezHiguitaJunesGCD, florezjunes, hosoya, koshy,koshy2}
and \cite{sloane} at \seqnum{A058071}. Note that the recursive definition of the Hosoya triangle $\mathcal{H}_{F}$ is given by \eqref{Hosoya:Seq} on 
Page \pageref{Hosoya:Seq}   with initial conditions $H_{F(1,1)}=$ $H_{F(2,1)}=$ $H_{F(2,2)}=$ $H_{F(3,2)}=1$.

\begin{table} [!ht]
\small
\begin{center} \addtolength{\tabcolsep}{-1pt} \scalebox{1}{%
\begin{tabular}{cccccccccccccccccc}
&&&&&&                                 1                                     &&&&&&&\\
&&&&&                             1   &&   1                                  &&&&&&\\
&&&&                           2 &&    1  &&   2                               &&&&&\\
&&&                         3  &&  2  &&   2  &&   3                            &&&&\\
&&                     5   &&  3  &&   4  &&   3  &&  5                           &&\\
&                   8  &&   5  &&  6  &&   6   &&   5  &&  8                       &\\
                13  &&  8  &&  10  &&  9  &&  10  &&  8  &&  13                 \\
\end{tabular}}
\end{center}
\caption{ Hosoya's triangle.} \label{TablaHoyaF}
\end{table}

In this section we characterize the graphs associated with certain symmetric matrices in the Hosoya triangle $\mathcal{H}_{F}\bmod 2$
--called Hosoya graphs-- and the  complements of these graphs. In particular, we show that the Hosoya graphs are integral 
and we present a necessary and sufficient condition for the complement Hosoya graphs to be integral.

We would like to observe that the Hosoya graphs associated with these special symmetric matrices were presented for the first time in an
article by Blair et al. \cite{Blair}.

\subsection{Graphs from symmetric matrices in the Hosoya triangle}

In this section we discuss the graphs associated with symmetric matrices in the Hosoya triangle $\mathcal{H}_{F}\bmod 2$.
Let us recall that the entries of the triangle $\mathcal{H}_{F}\bmod 2$ are given by $H_{F(r,k)}:= F_{k}F_{r-k+1}$, where $1\le k \le r$
(see Page \pageref{triangles:1}).

If $w= 3t+r$ with $t>0$ and $0\le r\le 2$, then we define the symmetric matrix $\mathcal{T}_{w}$ in the Hosoya triangle $\mathcal{H}_{F}$ in the following way:

\begin{equation} \label{eq5}
\mathcal{T}_{w} =  \left[ {\begin{array}{lllll}
	H_{F(1,1)} & H_{F(2,1)} & H_{F(3,1)} & \cdots& H_{F(w,1)} \\
	H_{F(2,2)}  &  H_{F(3,2)}& H_{F(4,2)} & \cdots & H_{F(w+1,2)}\\
	\vdots &\vdots&\vdots&\ddots &\vdots\\
	H_{F(w,w)}  &  H_{F(w+1,w)} & H_{F(w+2,w)} & \cdots & H_{F(2w-1,w)}
	\end{array} } \right]_{w\times w}.
\end{equation}

Let $T_{w}$ be the matrix $\mathcal{T}_{w} \bmod 2$. That is, $T_{w}$ is the matrix $[t_{ij}]_{w\times w}$ where $t_{ij}=H_{F(i,j)} \bmod 2$.
Let $\Theta_{w}$ be the graph of ${T}_{w}$, see Table \ref{tabla3}.
In Proposition \ref{Hosoya:graph} we show that $\Theta_{w}$ is integral.
Note that if $w=3t+r$ with $t> 0$ and $0\le r \le 2 $, then $(T_{w}-\lambda I)$ is a product of two vectors. It is easy to see
that $\Theta_{w}$ is an integral graph with exactly one non-zero eigenvalue $\lambda=2t+r$.

\begin{proposition}\label{Hosoya:graph}
 Let  $\Theta_{w}$ be the graph from $T_{w}$.
If $w=3t+r$ with $t> 0$ and $0\le r \le 2 $, then $\Theta_{w}$ is the complete graph $K_{2t+r}$ with loops at every vertex and $t$ isolated vertices.
Furthermore, $\Theta_{w}$ is an integral graph.
\end{proposition}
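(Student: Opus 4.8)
The plan is to show that the matrix $T_{w} = \mathcal{T}_{w} \bmod 2$ has a very rigid structure: its rows are all equal to one fixed $0$–$1$ pattern, up to which entries are nonzero, and in fact $T_{w}$ is a rank-one symmetric $0$–$1$ matrix (plus the loop structure on its support). First I would determine the parity of the entries $H_{F(i,j)} = F_{i} F_{j}$ wait—more precisely, I would record that the entry in row $i$, column $j$ of $\mathcal{T}_{w}$ is $H_{F(i+j-1,\,j)} = F_{j} F_{i}$ when read along the embedded diagonal, so that $t_{ij} = (F_{i} F_{j}) \bmod 2$. This is the key simplification: the $(i,j)$ entry factors as a product of two Fibonacci numbers depending separately on $i$ and on $j$.

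Next I would invoke the classical parity fact that $F_{n}$ is even if and only if $3 \mid n$. Define the indicator vector $\mathbf{x} = [x_{1}, \dots, x_{w}]^{T}$ by $x_{i} = F_{i} \bmod 2$, so that $x_{i} = 0$ precisely when $i \equiv 0 \pmod 3$ and $x_{i} = 1$ otherwise. Then $t_{ij} = x_{i} x_{j} \bmod 2$, which means $T_{w} = \mathbf{x}\,\mathbf{x}^{T} \bmod 2$; this is exactly the factorization into a product of two vectors mentioned just before the statement. Counting the indices $i$ with $1 \le i \le w = 3t+r$ for which $x_{i} = 1$ gives $2t + r$ of them (two out of every three, plus the appropriate remainder handling for $r \in \{0,1,2\}$), and the remaining $t$ indices are the ones divisible by $3$, which are the zero rows/columns.

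From $T_{w} = \mathbf{x}\mathbf{x}^{T}$ the graph structure is immediate. The $t$ indices with $x_{i} = 0$ give zero rows and columns, hence $t$ isolated vertices. The $2t+r$ indices with $x_{i} = 1$ span a principal submatrix that is the all-ones matrix $J_{2t+r}$; since its diagonal entries $t_{ii} = x_{i}^{2} = 1$ are all $1$, every one of these vertices carries a loop, and every pair is adjacent, giving $K_{2t+r}$ with a loop at each vertex. This establishes the stated structure $\Theta_{w} = K_{2t+r}^{*} \sqcup \overline{K}_{t}$.

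For integrality, the cleanest route is spectral: the adjacency matrix of $\Theta_{w}$ is exactly $T_{w} = \mathbf{x}\mathbf{x}^{T}$ (over the reals, using the $0$–$1$ entries), a symmetric rank-one matrix. A rank-one matrix $\mathbf{x}\mathbf{x}^{T}$ has eigenvalue $\mathbf{x}^{T}\mathbf{x} = \lVert \mathbf{x} \rVert^{2}$ with eigenvector $\mathbf{x}$, and eigenvalue $0$ with multiplicity $w-1$. Since $\mathbf{x}$ is a $0$–$1$ vector with exactly $2t+r$ ones, $\lVert \mathbf{x} \rVert^{2} = 2t + r$. Thus the only nonzero eigenvalue is $\lambda = 2t+r \in \mathbb{Z}$ and all other eigenvalues are $0$, so every eigenvalue is an integer and $\Theta_{w}$ is integral. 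I expect no serious obstacle here; the one point requiring a little care is bookkeeping the count $2t+r$ of nonzero indices across the three residue classes of $w$ modulo $3$, and making sure the identification $t_{ij} = (F_{i}F_{j}) \bmod 2$ correctly accounts for how the symmetric matrix $\mathcal{T}_{w}$ is read off the diagonals of $\mathcal{H}_{F}$.
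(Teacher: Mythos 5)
Your proposal is correct and follows essentially the same route as the paper: the parity fact $F_n\equiv 0\pmod 2\iff 3\mid n$ identifies the $t$ zero rows/columns and the $2t+r$ all-ones rows/columns, yielding $K_{2t+r}^{*}\sqcup\overline{K}_{t}$, and the integrality comes from the rank-one structure giving the single nonzero eigenvalue $2t+r$. The only difference is cosmetic: you make the factorization $T_{w}=\mathbf{x}\mathbf{x}^{T}$ fully explicit, whereas the paper only alludes to it in the remark preceding the proposition and argues the entry parities row-by-row and column-by-column.
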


\begin{proof} It is known that the Fibonacci number $F_{n}\equiv 0 \bmod 2 \iff 3\mid n$. This and the definition of $\mathcal{T}_{w}$ (see \eqref{eq5}),
imply that every
	third row and every third column of $\mathcal{T}_{w}$ are formed by even numbers and that the remaining rows and columns are formed by odd numbers only.
	Thus, if $t_{ij}$ is an entry of $\mathcal{T}_{w}$, then $t_{ij}\equiv 0 \bmod 2 \iff i \equiv 0 \mod 3 \text{ or } j \equiv 0 \mod 3$.
	This and $w=3t+r$ imply that $T_{w}$ contains $t$ columns and $t$ rows with zeros as entries. The remaining $2t+r$ rows and columns have ones as entries.
	These two features of $T_{w}$  give   $(K_{2t+r}^{*}\sqcup \overline{K_{t}})$ (the disjoint union of the complete graph on $2t+r$ vertices with loops at every
	vertex and $t$ isolated vertices). Finally, from the discussion above we have that the only eigenvalue of $\Theta_{w}$ is an integer, namely $\lambda=2t+r$.
Therefore, $\Theta_{w}$ is an integral graph.
	This completes the proof.
\end{proof}

\begin{table}[!ht]
	\begin{tabular}{|l|c|c|c|} \hline
	$k$ &	$3t$ & $3t+1$   &  $3t+2$ \\ \hline  \hline
	1  & $\vcenter{\hbox{\includegraphics[scale=.3]{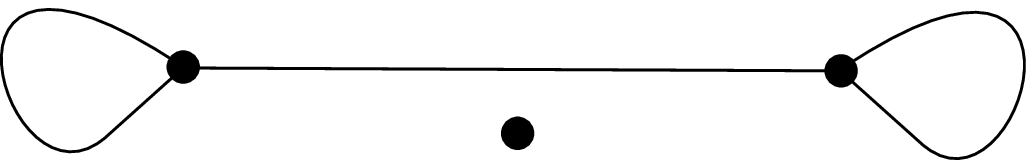}}}$ & 
	$\vcenter{\hbox{\includegraphics[scale=.085]{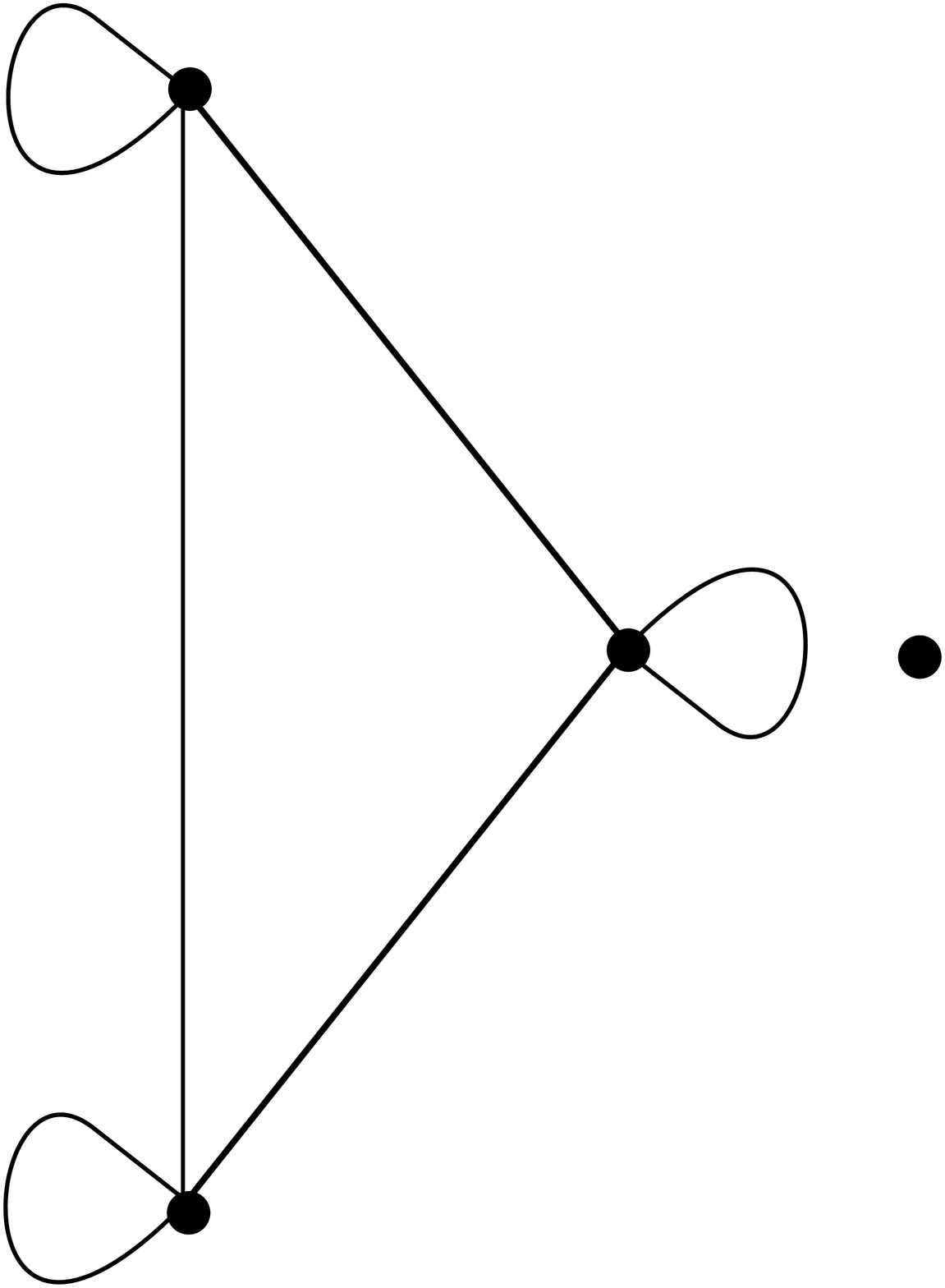}}}$&  
	$\vcenter{\hbox{\includegraphics[scale=.3]{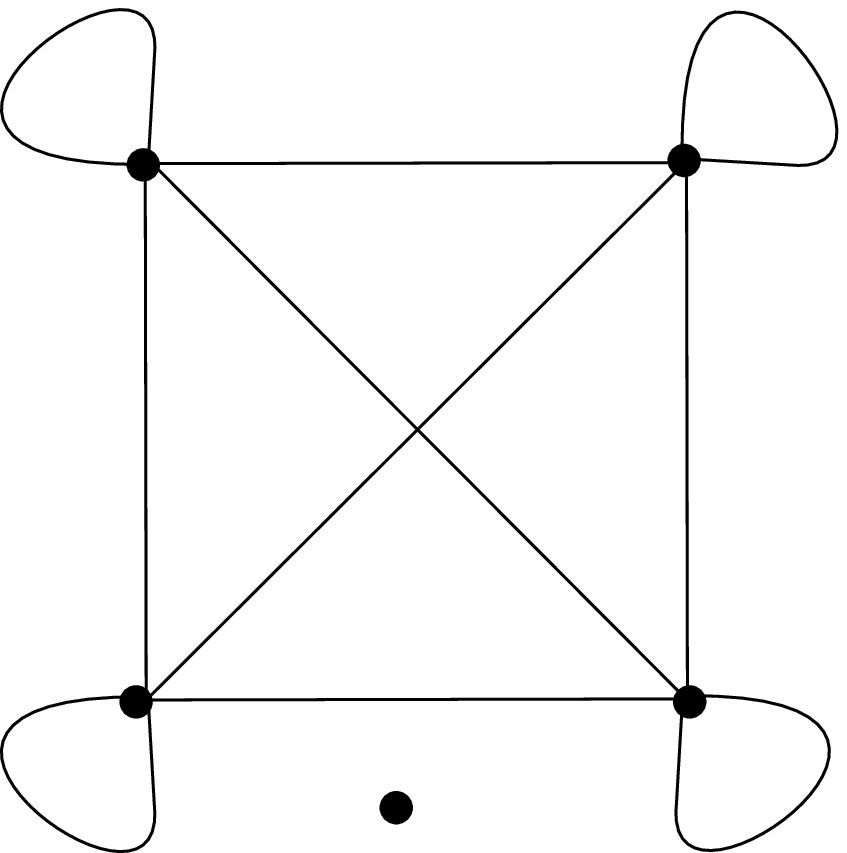}}}$\\ \hline
    2 & $\vcenter{\hbox{\includegraphics[scale=.3]{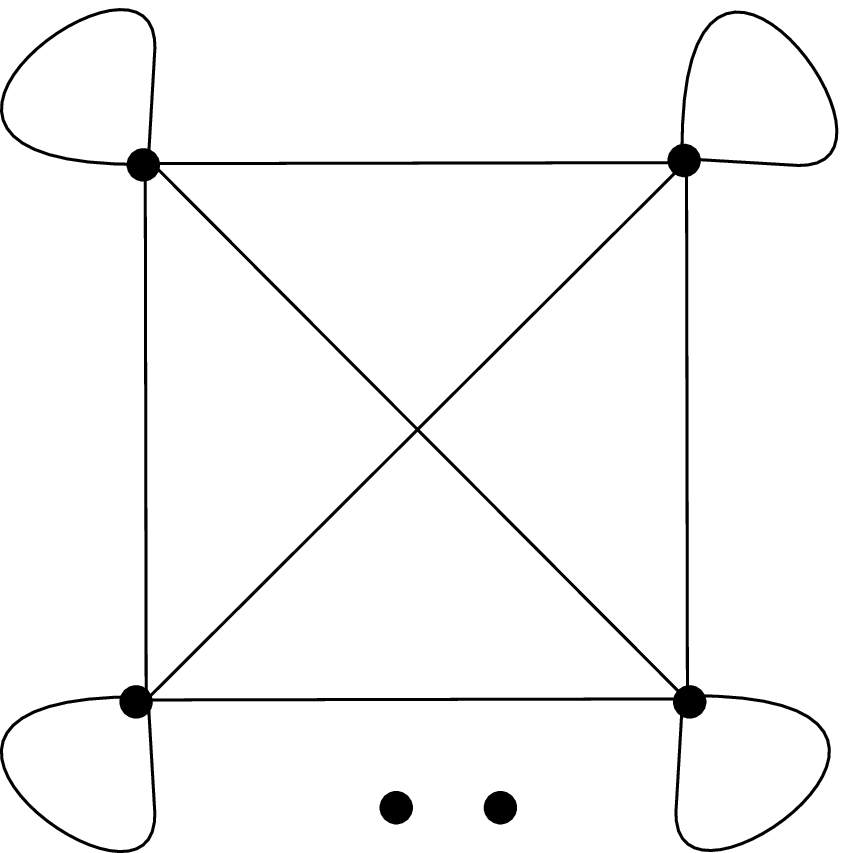}}}$ &
    $\vcenter{\hbox{\includegraphics[scale=.11]{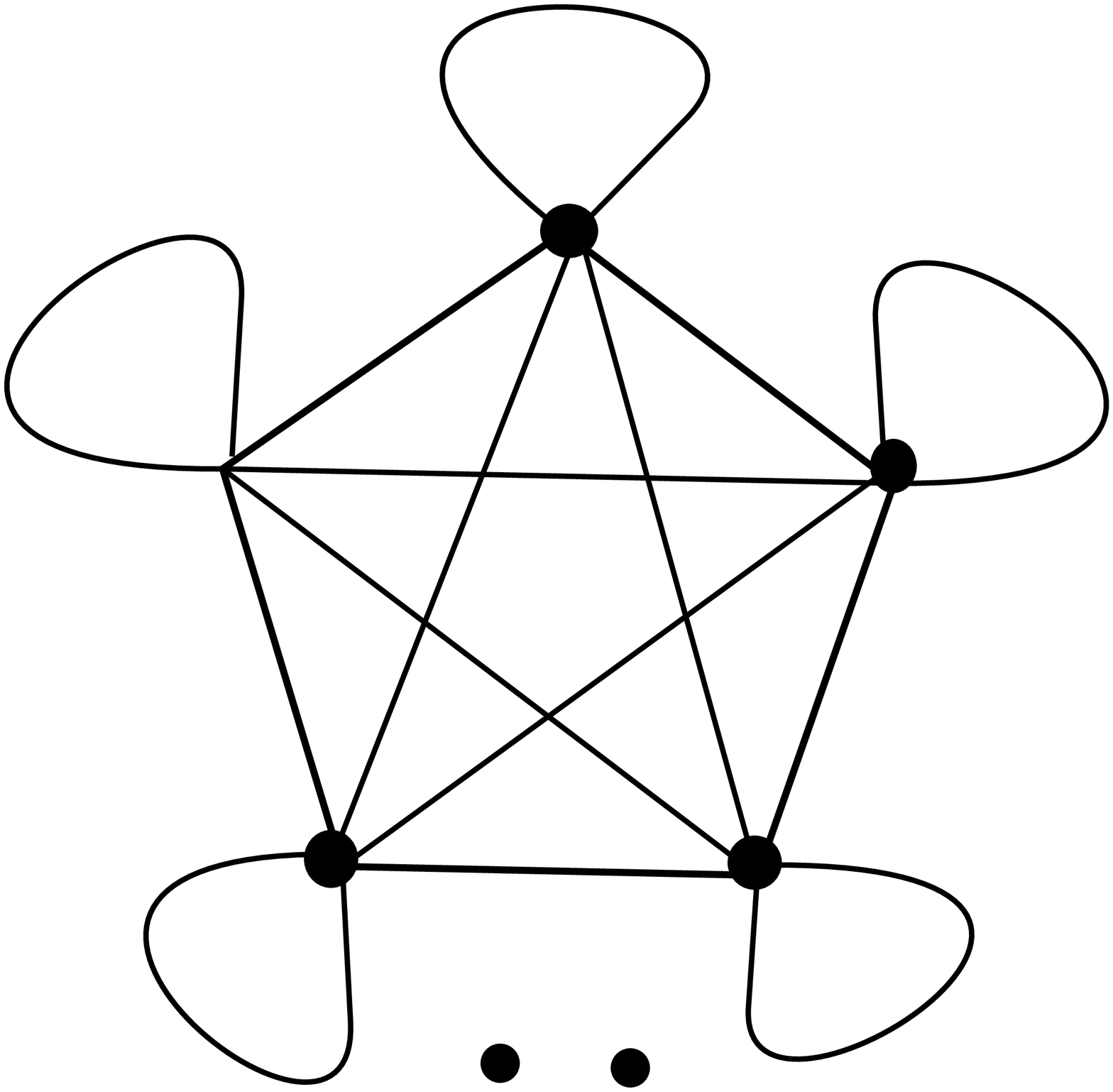}}}$& 
    $\vcenter{\hbox{\includegraphics[scale=.3]{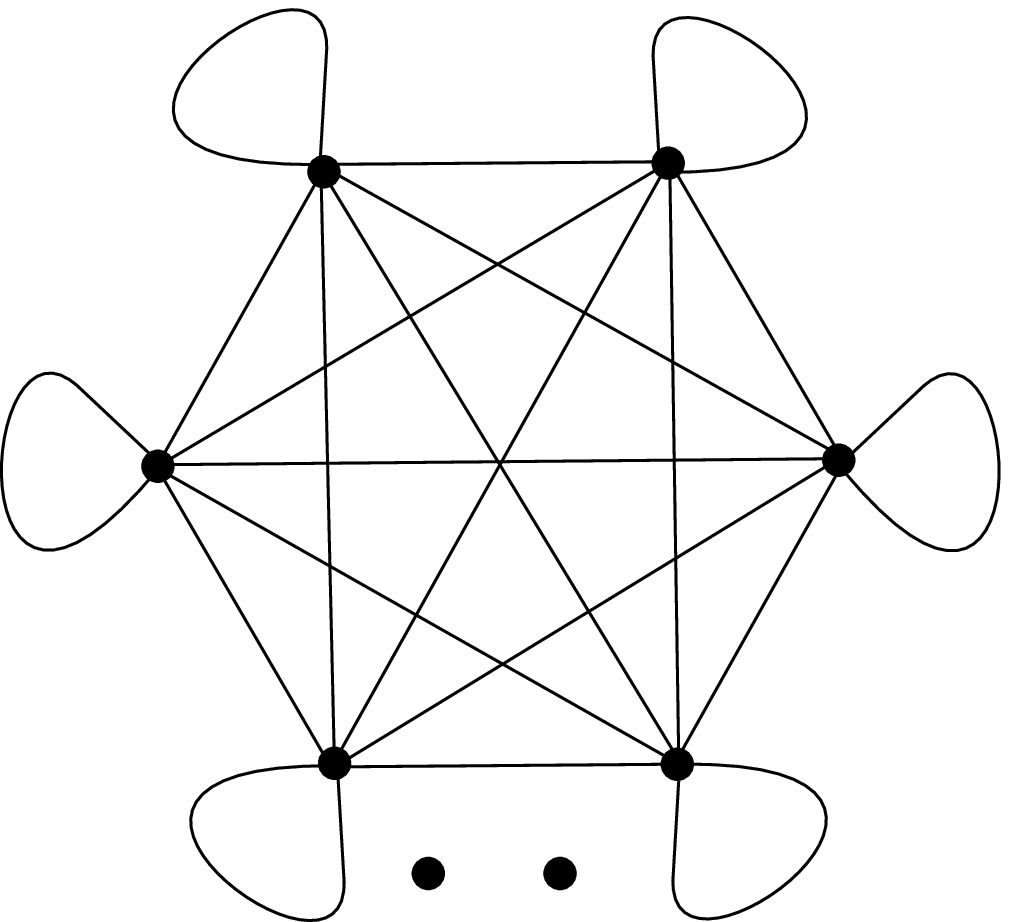}}}$\\ \hline
    3 & $\vcenter{\hbox{\includegraphics[scale=.33]{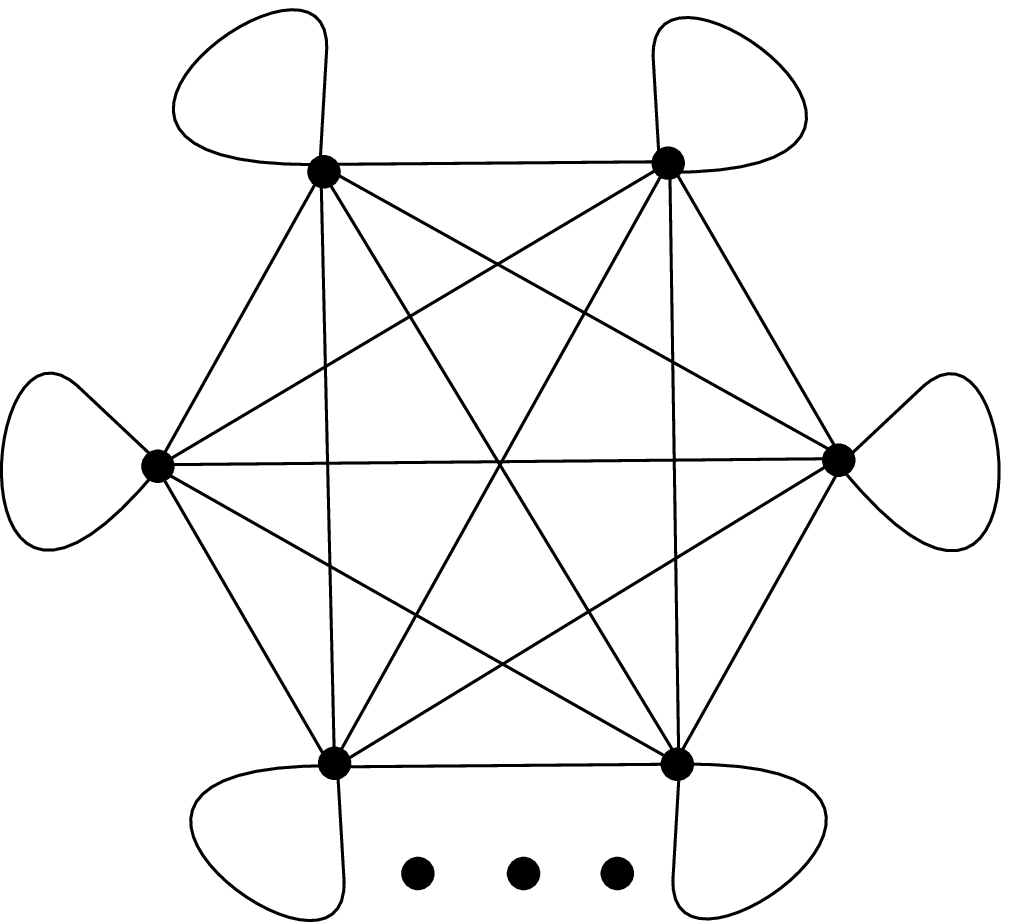}}}$ &
     $\vcenter{\hbox{\includegraphics[scale=.3]{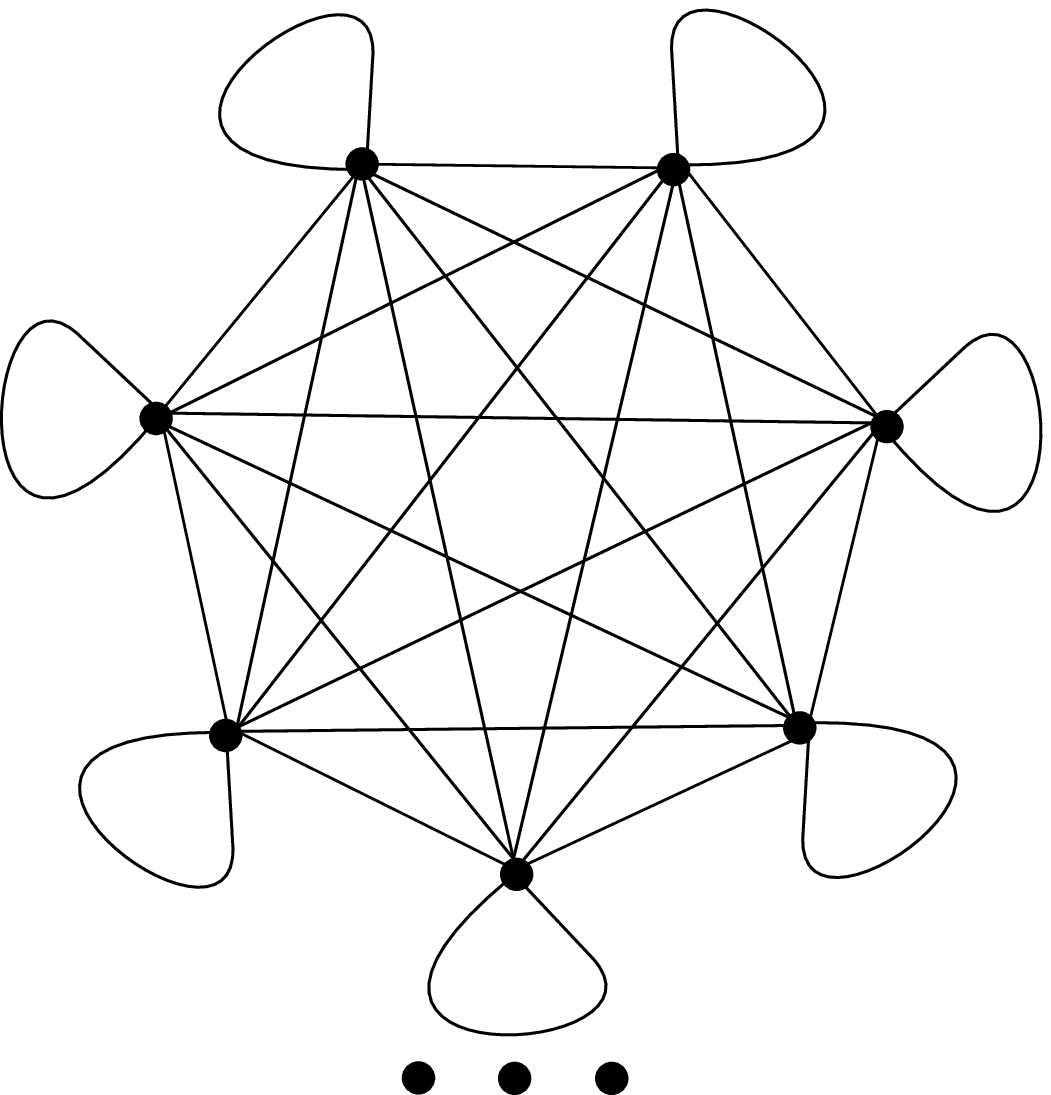}}}$
     &$\vcenter{\hbox{\includegraphics[scale=.3]{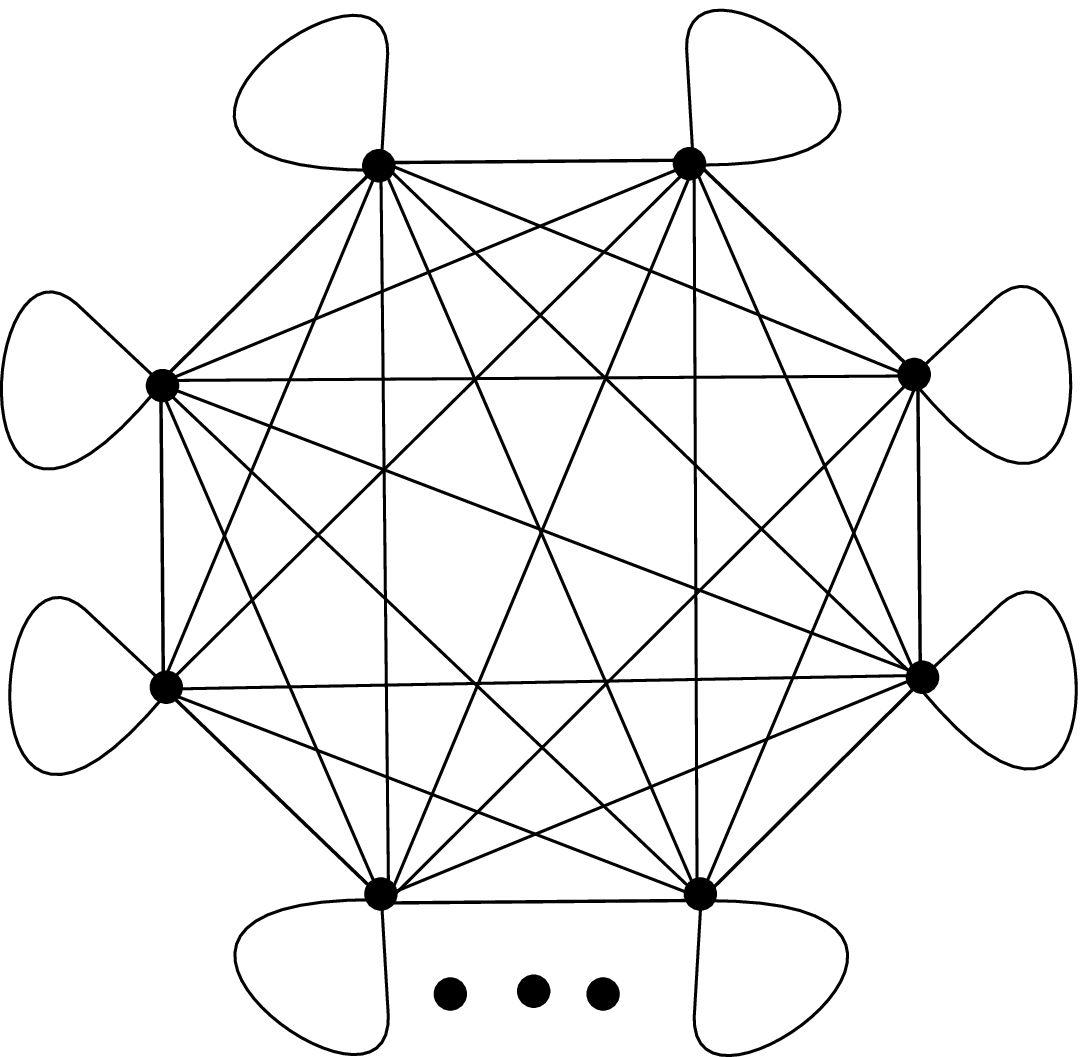}}}$\\ \hline
    4 & $\vcenter{\hbox{\includegraphics[scale=.3]{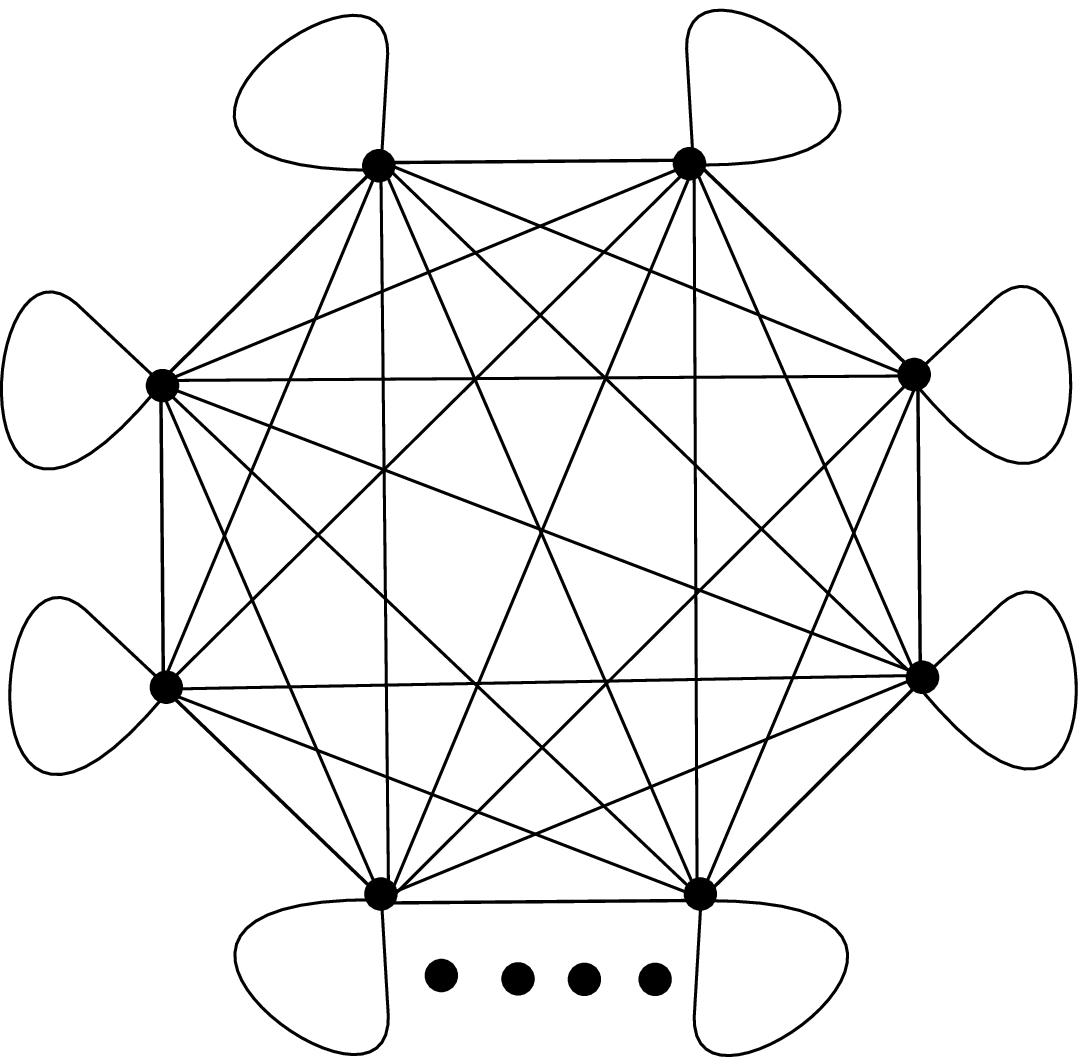}}}$ &
     $\vcenter{\hbox{\includegraphics[scale=.34]{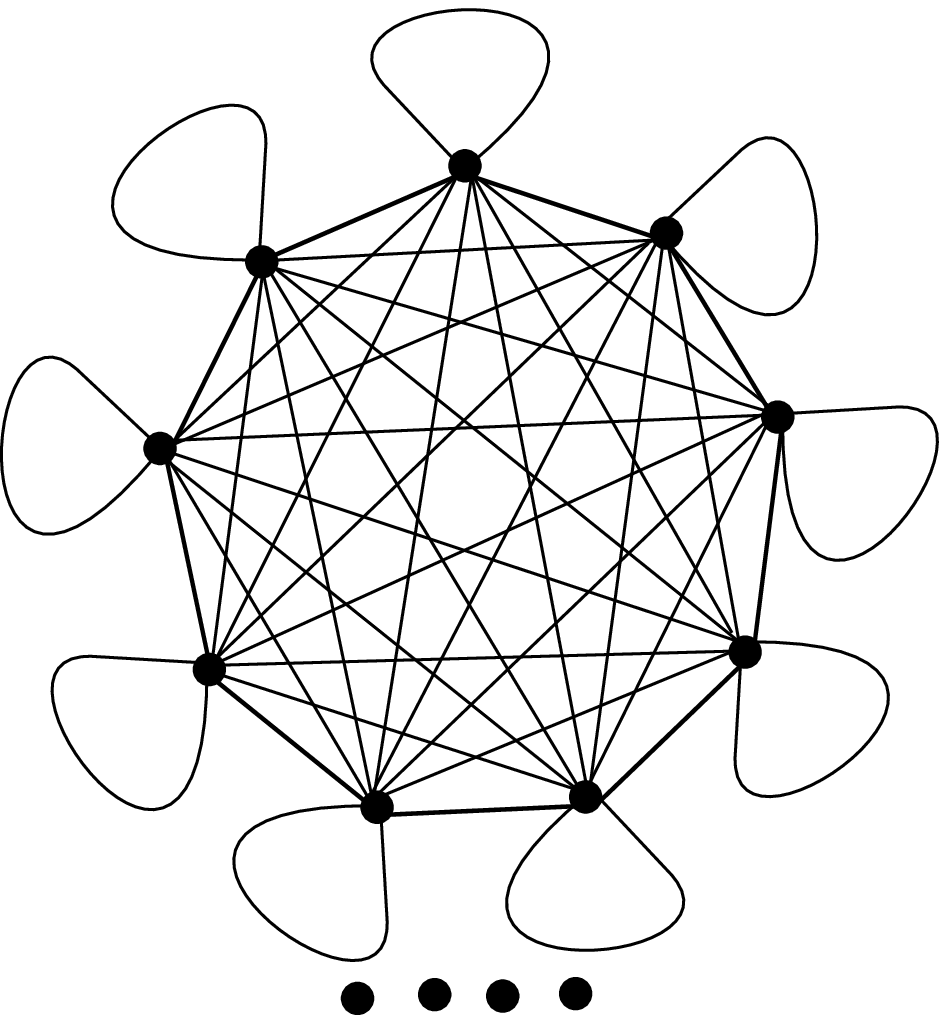}}}$ & 
     $\vcenter{\hbox{\includegraphics[scale=.33]{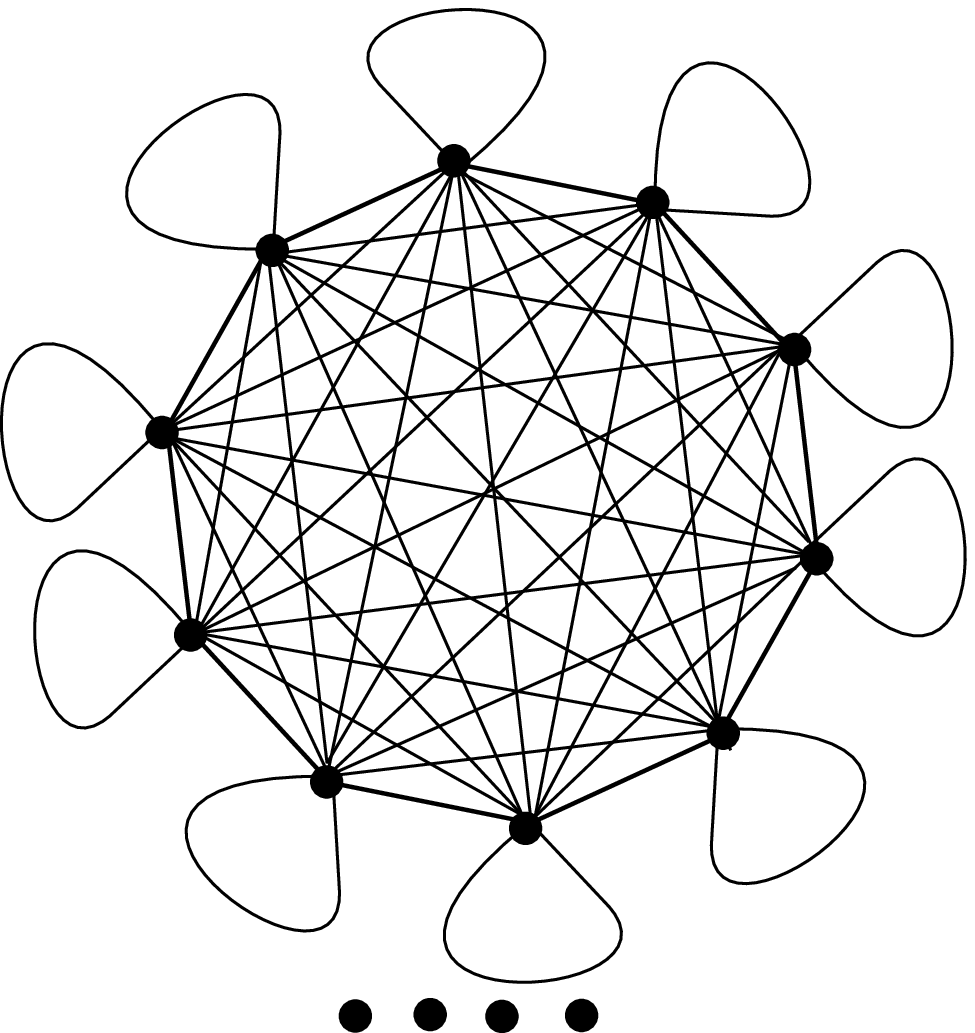}}}$\\ \hline
\end{tabular}
\caption{Graphs $\Theta_{w}$.} \label{tabla3}
\end{table}

\subsection{Complements of the Hosoya Graph}

In this section we first discuss the necessary and sufficient condition for the graphs of the form
$G:= K_{n}\nabla \overline{K}_{r}$ for $n,r\in \mathbb{Z}_{>0}$, to be integral. These graphs are the generalizations of graphs that are the
complements of the Hosoya graphs. It is easy to see (using \cite[Table 2]{barik}) that the graph $G:=K_{n} \nabla \overline{K}_{r}$ is integral if and only if there are $p, q\in \mathbb{Z}_{>0}$ such that $nr=pq$ with $n-1=p-q$. The characteristic polynomial of $G$ is given by  $P(\lambda)=(-1)^r\lambda^{r-1} (\lambda+1)^{(n-1)}(\lambda^2-(n-1)\lambda-nr)$.

We observe that the complement of the Hosoya graphs, denoted by $\overline{\Theta}_{w}$, can be described as the join of a complete graph and an empty graph.
In fact, $\overline{\Theta}_{w}= K_{t}\nabla\overline{K}_{w-t}$ with $w=3t+r$ for $0\le r\le 2$.
These graphs are special cases of the graphs $K_{n}\nabla \overline{K}_{r}$ (discussed in the previous paragraph).
In addition, recall that the characteristic polynomial of
$K_{n}\nabla \overline{K}_{r}$ is $P(\lambda)=(-1)^r\lambda^{r-1} (\lambda+1)^{(n-1)}(\lambda^2-(n-1)\lambda-nr)$.
Thus, if we consider the graphs $\overline{\Theta}_{w}$, then it is easy to check that $P(\lambda)$ has integral roots if and only if $w=3t+2$.
In other words, the graph $\overline{\Theta}_{w}$ is integral if and only if $w=3t+2$.  Recently, Be\'{a}ta et al. \cite{beata}
have shown a combinatorial connection with graphs of the  form $K_{n} \nabla \overline{K}_{r}$.

\section{Acknowledgement}
	
The second and third authors were partially supported by The Citadel Foundation. 

\bigskip
\hrule
\bigskip
	
\noindent 2010 {\it Mathematics Subject Classification}:
Primary 68R11; Secondary 11B39.
	
\noindent \emph{Keywords: }
Fibonacci number, determinant Hosoya triangle, adjacency matrix, eigenvalue, integral graph, cograph.

\end{document}